\documentclass[10pt]{amsart}

\usepackage{amsmath}
\usepackage{amssymb}
\usepackage{amsthm}
\usepackage{textcomp}
\usepackage{indentfirst}
\usepackage{mathrsfs}
\usepackage{tikz-cd}
\usepackage{epsfig}
\usepackage{color}
\usepackage{bbm}
\usepackage[yyyymmdd,hhmmss]{datetime}
\usepackage{thmtools}
\usepackage{mathdots}
\declaretheoremstyle[headfont=\normalfont]{normalhead}

\usepackage[margin=1.175in]{geometry}
\usepackage{lipsum}

\pagestyle{plain}

\usepackage[pagebackref]{hyperref}
\hypersetup{
    colorlinks = true,
    citecolor = [rgb]{0, 0.5, 0.00},
}

\renewcommand*{\backref}[1]{}
\renewcommand*{\backrefalt}[4]{%
    \ifcase #1 (Not cited.)%
    \or        (Cited on page~#2.)%
    \else      (Cited on pages~#2.)%
    \fi}

\newtheorem{thm}{Theorem}
\newtheorem{lem}[thm]{Lemma}
\newtheorem{prop}[thm]{Proposition}
\newtheorem{cor}[thm]{Corollary}
\newtheorem{defn}[thm]{Definition}
\newtheorem{rmk}[thm]{Remark}

\newtheorem{example}[thm]{Example}

\newcommand\G{\mathbf{G}}
\renewcommand\H{\mathbf{H}}
\newcommand\Q{\mathbf{Q}}
\renewcommand\L{\mathbf{L}}
\newcommand\V{\mathbf{V}}
\renewcommand\P{\mathbf{P}}
\newcommand\M{\mathbf{M}}
\newcommand\U{\mathbf{U}}
\newcommand\B{\mathbf{B}}
\newcommand\T{\mathbf{T}}
\newcommand\N{\mathbf{N}}

\newcommand\setc{\mathfrak{c}}
\newcommand\Z{\mathbb{Z}}
\newcommand\R{\mathbb{R}}
\newcommand\C{\mathbb{C}}
\newcommand\A{\mathbb{A}}
\newcommand\X{\mathbf{X}}
\newcommand\Au{\mathcal{A}}

\newcommand\GL{\mathbf{GL}}

\newcommand\Hom{\operatorname{Hom}}

\newcommand\Res{\operatorname{Res}}

\newcommand{\mo}{\backslash}
\newcommand{\Sp}{\mathbf{Sp}}

\newcommand{\e}{\`e}
\newcommand{\pair}[2]{\langle #1,#2 \rangle}
\newcommand{\Un}[1]{\mathbf{U_{#1}}}
\newcommand{\Spn}[1]{\mathbf{Sp_{#1}}}
\newcommand{\GLn}[1]{\mathbf{GL_{#1}}}
\newcommand{\ResGL}[1]{\mathbf{Res_{E/F}GL_{#1}}}
\newcommand{\ResSp}[1]{\mathbf{Res_{E/F}Sp_{#1}}}
\newcommand\diag{\operatorname{diag}}
\newcommand\pr{\operatorname{pr}}
\newcommand\MWM{{}_MW_M}
\newcommand\RR{\mathbf{R}}
\renewcommand\Z{\mathbf{Z}}

\newcommand{\frakA}{\mathfrak{A}}
\newcommand{\frakB}{\mathfrak{B}}
\newcommand{\frakC}{\mathfrak{C}}
\newcommand{\frakE}{\mathfrak{E}}
\newcommand{\frakF}{\mathfrak{F}}
\newcommand{\frakG}{\mathfrak{G}}
\newcommand{\frakP}{\mathfrak{P}}
\newcommand{\frakQ}{\mathfrak{Q}}
\newcommand{\frakS}{\mathfrak{S}}
\newcommand{\frakW}{\mathfrak{W}}
\newcommand{\frakX}{\mathfrak{X}}
\newcommand\fraka{\mathfrak{a}}
\newcommand{\frakc}{\mathfrak{c}}

\newcommand\picusp{\operatorname{\Pi_{cusp}}}       
\newcommand{\BG}{([\G])}                            
\newcommand{\UAMG}{\U(\A)M\backslash\G(\A)}         
\newcommand{\hdist}{H\text{-dist}}                  
\newcommand{\hconv}{H\text{-conv}}                  
\newcommand{\disc}{\text{disc}}                     
\newcommand{\cusp}{\text{cusp}}                     
\newcommand{\SHdist}{[S_\frakX]_{H\text{-dist}}}    
\newcommand{\PRFMpi}{P^{R,\frakF}_{(M,\pi)}}        
\newcommand{\PRFX}{P^{R,\frakF}_{\frakX}}        

\renewcommand\Re{\mathrm{Re}}

\title{On the $\mathrm{Sp}_{2n}$-distinguished automorphic spectrum of $\mathrm{U}_{2n}$}
\author{Kewen Wang, Yu Xin}
\begin{document}
\maketitle

\begin{abstract}
Given a reductive group $G$ and a reductive subgroup $H$, both defined over a
number field $F$, we introduce the notion of the $H$-distinguished automorphic spectrum of $G$ and analyze it for the pair $(\mathrm{U}_{2n},\mathrm{Sp}_{2n})$. We derived a formula for period integrals of pseudo-Eisenstein series of $\mathrm{U}_{2n}$ in analogy with the main result of Lapid and Offen in their work analyzing the pair $(\mathrm{Sp}_{4n},\mathrm{Sp}_{2n} \times \mathrm{Sp}_{2n})$. We give an upper bound of the distinguished spectrum with the formula. A non-trivial lower bound of the discrete distinguished spectrum is expected from the formula, given the previous work.
\end{abstract}
\tableofcontents

\section{Introduction}
Let $G$ be a reductive group over a number field $F$, and let $H$ be a closed subgroup of $G$ defined over $F$. Let $\A$ be a ring of adeles of $F$. In the theory of automorphic forms one is often interested in the period integral
\[
\int_{H(F) \mo H(\A)} \varphi(h) dh
\]
and in automorphic representations of $G(\A)$ on which such an integral is not identically zero. Such automorphic representations are called \textit{$H$-distinguished}.

The $H$-distinguished cuspidal spectrum is well defined as the linear functional of the period integral is always defined on a cuspidal representation (cf. \cite{AGR}). However, it is not always the case for general automorphic representations. Lapid and Offen proposed a candidate for this concept and studied it in the case when $(G,H) = (\mathrm{Sp}_{4n},\mathrm{Sp}_{2n}\times \mathrm{Sp}_{2n})$. The technical heart of their work is a formula of the period integral of pseudo-Eisenstein series based on the study of the double cosets $P \mo G/H$, where $P$ is a parabolic subgroup of $G$. Much of the work here can date further back to Jacquet, Lapid and Rogawski in \cite{JLR} and \cite{LR}.

In this paper, we followed the work \cite{LO} of Lapid and Offen, spiritually and technically, and studied the $H$-distinction for the pair $(G,H) = (\mathrm{U}_{2n},\mathrm{Sp}_{2n})$. We derived the formula for period integrals of pseudo-Eisenstein series of $\mathrm{U}_{2n}$. With the formula, we gave an upper bound of the $H$-distinguished spectrum in terms of the coarse decomposition
\[
L^2([G]) = \hat{\bigoplus}_\mathfrak{X} L^2_\mathfrak{X}([G])
\]
by excluding some cuspidal data. Moreover, for the remaining cuspidal data $\mathfrak{X}$ we will control the affine spaces $\mathfrak{S}$ which potentially contribute to the $H$-distinguished spectrum $L^2_{H-dist}([G])$ under the finer decomposition
\[
L^2_\mathfrak{X}([G]) = \bigoplus_\mathfrak{S} L^2_\mathfrak{X}([G])_\mathfrak{S}
\]
according to intersections of singular hyperplanes.

A natural extension of this work is to understand the discrete distinguished spectrum. As that in \cite{LO}, we expect to have a non-trivial discrete $H$-distinguished subrepresentation. However, we have not accomplished this goal.

\subsection*{Acknowledgement}
The majority of this work took place at Brandeis university where the authors studied as graduate students, while some complementary work took place at Bar-Ilan university where the authors worked as postdoctoral fellows. The authors would like to thank both the institutions for their support and excellent
working environments. In particular, the authors were supported by the ISRAEL SCIENCE FOUNDATION (grant No. 376/21) at Bar-Ilan university. 

The authors would like to thank Omer Offen for suggesting the problem and for his dedicating help. 
The authors would also like to express their gratitude to Rahul Krishna for the useful discussions.

\section{Notations}
\subsection{General Setup}
Let $F$ be a number field and $\A = \A_F$ its ring of ad\e les. We tend to use the bold font letter as an algebraic variety and the corresponding common font letter as its rational point. If $\mathbf{X}$ is an algebraic variety over $F$, we write $X = \mathbf{X}(F)$ for its $F$-points. Let $E$ be a field extension of $F$, we will write the ring of ad\e les for the number field $E$ as $\A_E$ and the base change of the variety $\mathbf{X}$ to $E$ as $\mathbf{X}_E$.

For an algebraic group $\mathbf{Q}$ defined over $F$ we denote $X^*(\mathbf{Q})$ the lattice of $F$-rational characters of $Q$. Let $\fraka_Q^* = X^*(\mathbf{Q}) \otimes_\Z \R$ and let $\fraka_Q = \Hom_\R(\fraka_Q^*,\R)$ be its dual vector space with the natural pairing $\langle \cdot,\cdot \rangle = \langle \cdot,\cdot \rangle_Q$. We endow $\fraka_Q$ and $\fraka_Q^*$ with Euclidean norms $\|\cdot\|$. Note that since they are finite dimensional vector spaces any of two norms on either of them are equivalent. We denote by $\fraka_\C$ the complexification of a real vector space $\fraka$.

\subsection{Reductive Groups}\label{redgp}
Let $\G$ be a reductive group over $F$. We fix a minimal parabolic subgroup $\P_0$ of $\G$ defined over $F$, a maximal $F$-split torus $\mathbf{T}$ of $\mathbf{G}$ contained in $\mathbf{P}_0$, and a maximal compact subgroup $K$ of $\G(\A)$ where $\mathbf{G}(\A) = \mathbf{P}_0(\A)K$.

Let $\P$ be (any) parabolic subgroup of $\G$. Fix a Levi decomposition $\P = \M \ltimes \U$. Fix a maximal split torus $T_M$. It is a fact that $X^*(P) = X^*(M)$ and both of them are a sublattice of $X^*(T_M)$ of full rank. Consequently, we have $\fraka_M^* = \fraka_P^*$ . We tend to use $\fraka_M^*$ to unify the notations.

Let $\P$ be a semi-standard parabolic subgroup of $\G$. There is a unique Levi decomposition $\P = \M \ltimes \U$ where $\M$ contains the fixed maximal split torus $\mathbf{T}$. Starting from now, unless mentioned, a parabolic subgroup in this paper is semi-standard and the Levi decomposition is the unique such one. If $\Q = \L \times \V$ contains $\P$ (hence $\Q$ also semi-standard), such Levi subgroup $\L$ of $\Q$ contains $\M$. There is a canonical projection $\fraka_M^* \twoheadrightarrow \fraka_L^*$ induced by $T_M \hookrightarrow T_L$, and a canonical inclusion $\fraka_M^* \hookrightarrow \fraka_L^*$ induced by the restriction maps induced by $T_M \hookrightarrow M$ and $T_L \hookrightarrow M$. Define $(\fraka_M^L)^*$ as the kernel of the above projection, we have the split exact sequence:
\begin{align}\label{liealg}
0 \rightarrow (\fraka_M^L)^* \rightarrow \fraka_M^* \rightarrow \fraka_L^* \rightarrow 0.
\end{align}
In particular, all the above spaces can be viewed as subspaces of $\fraka_0^*$.

Taking dual vector spaces for the above split exact sequence, we get one for the Lie algebras. Similarly, all the involved vector spaces are subspaces of $\fraka_0$.



We define the map $H_0 : \mathbf{T}(\A) \rightarrow \fraka_0$ as the one such that
\[
e^{\langle \chi,H_0(t) \rangle} = |\chi(t)|
\]
for all $\chi \in X^*(\mathbf{T})$ and $t \in \mathbf{T}(\A)$. We extend the map $H_0$ to $\G(\A)$ by the Levi decomposition and the Iwasawa decomposition. Let $\P = \M \ltimes \U$ be a parabolic subgroup, we define $H_P = H_M$ as the composition of the projection $\fraka_0 \rightarrow \fraka_M$ and $H_0$.

Let $\rho_P \in \fraka_M^*$ be the unique one such that
\[
\delta_P(p) = e^{\langle 2\rho_P,H_P(p) \rangle}
\]
for all $p \in \P(\A)$.

We also fix a Siegel domain $\mathfrak{S}_G$ for $G\mo G(A)$ and let $\mathfrak{S}_G^1 = \mathfrak{S}_G \cap \G(\A)^1$.

Let $\mathbf{T}_\G$ be the split part of the identity component of the center of $\G$. We embed $\mathbf{T}_\G(\R)$ in $\mathbf{T}_\G(\A)$ by the diagonal embedding $\R \hookrightarrow F_\infty$ and the usual embedding $F_\infty \hookrightarrow \A$ and denote $A_G$ the image of the identity component $\mathbf{T}_\G(\R)^\circ$ in $\mathbf{T}_\G(\A)$.
{\color{black}
We have 
\[H_G:A_G\to\fraka_G\]
and we denote $\nu\mapsto e^{\nu}$ the inverse map of $H_G$.
}
For convenience we denote the adelic quotient of $\G$ as
\[
[\G] = A_GG\mo \G(\A).
\]
More generally, if $\H$ is a closed subgroup of $\G$ defined over $F$, then we denote
\[
A_G^H = A_G \cap \H(\A)
\]
and
\[
[\H]_G = A_G^H H \mo \H(\A) \subseteq [\G].
\]

\subsection{Height Function}
For $n \in \mathbb{N}$ let $\GL_n$ be the general linear group of rank $n$. For a matrix $g = (g_{i,j}) \in \GL_n(\A)$ let
\[
\|g\| = \|g\|_{\GL_n} = \prod_v \max_{1 \leq i,j \leq n} \{|g_{i,j}|_v,|(g^{-1})_{i,j}|_v\},
\]
where the product ranges over all places $v$ of $F$.

Fixing a faithful $F$-rational representation $\rho:\G \rightarrow \GL_n$, we defined 
\[
\|g\| = \|g\|_\rho = \|r(g)\|_{\GL_n(\A)}.
\]
We list some standard facts about the height function $\|\cdot\|$. The proofs can be found in \cite{MW}. We use the notation $A \ll B$ to mean that there exists a constant $c>0$ such that $A \leq cB$, where the constant $c$ is independent of the underlying parameters. If we want to emphasize the dependence of $c$ on other parameters, say, $T$, we will write $A \ll_T B$. In particular, we have
\begin{align}
    1 &\ll \|g\| \text{ for all $g \in \G(\A)$,}\label{5a}\\
    \|g_1g_2\| &\ll \|g_1\|\|g_2\| \text{ for all $g \in \G(\A)$,}\label{5b}\\
    \|H_0(g)\| &\ll 1 + \log\|g\| \text{ for all $g \in \G(\A)$,}\label{5c}\\
    \log \|g\| &\ll 1 + \|H_0(g)\| \text{ for all $g \in \mathfrak{S}_G^1$,}\label{5d}\\
    \|g\| &\ll \|\gamma g\| \text{ for all $g \in \mathfrak{S}_G$ and $\gamma \in G$,}\label{5e}\\
\text{there exists $N$ such that} \|a\|\|g\| &\ll \|ag\|^N \text{for all $g \in \G(\A)^1,a\in A_G$}.\label{2}
\end{align}
Moreover, if $\Omega$ is a compact subset of $\G(\A)$, then we have
\begin{equation}
\sup_{x \in \Omega, g \in \G(\A)}\| H_0(gx)-H_0(g)\| = \sup_{x \in \Omega, k \in K}\|H_0(kx)\| < \infty,
\end{equation}
where $K$ is a maximal compact subgroup of $\G(\A)$.

\subsection{Root System}
Given a reductive group $\G$, we fix a minimal parabolic subgroup $\P_0$ and $\T$ be the maximal split torus of the center of $\P_0$.
We let $\Sigma=R(T,G)$ be the root system of $G$ with respect to $T$ and $\Delta_0 = \Delta_0^G$ be the basis of simple roots with respect to $P_0$. Both $\Sigma$ and $\Delta_0$ are viewed as a subset of $\fraka_0^*$. (Recall that there is a bijection between the collection of subsets of $\Delta_0$ and standard parabolic subgroups of $\G$.) 
We called $\P$ a standard (\textit{resp.} semistandard) parabolic subgroup of $\G$ if $\P\supseteq\P_0$ (\textit{resp.} $\P\supseteq\T$). 
Each semistandard parabolic subgroup admits a unique Levi decomposition $\P=\M\ltimes\U$, where $\M\supseteq\T$. We call $\M$ the Levi subgroup of $\G$ (or the Levi part of $\P$), and $\U$ the unipotent radical of $\P$.
If a Levi subgroup $\M$ contains $\T$, then we call $\M$ a semistandard Levi subgroups, and if a Levi subgroup comes from a Levi decomposition of a standard parabolic subgroup $\P$, then we call $\M$ a standard Levi subgroup. 
In this paper, we assume that all parabolic subgroups and Levi subgroups are standard unless otherwise specified. For any standard parabolic subgroup $\P = \M \ltimes \U$ we have (cf. \cite{LR}, Lemma 6.1.1)
\begin{equation}\label{6}
    \|m\| \ll \|mu\| \text{ for all } m \in \M(\A),u\in\U(\A).
\end{equation}

Given a semistandard parabolic subgroup with Levi decomposition $\P=\M\ltimes\U$, we have $\fraka_P^*=\fraka_M^*$. If $\Q\supseteq\P$ is another larger semistandard parabolic subgroup, we have a unique Levi decomposition $\Q=\L\ltimes\V$ where $\L\supseteq\M$. Thus, $\fraka_L$ is a subspace of $\fraka_M$. Further, there is a canonical decomposition of $\fraka_M=\fraka_L\oplus\fraka_M^L$. We can apply the same setting to the dual spaces. 

We follow the similar notation from \cite[\S I.1.6]{MW}. Given a Levi subgroup $\M$ of $\G$, we define $\Sigma^M=R(T,M)$ to be the subset of $\Sigma$. Let $\T_M$ be the maximal split torus in the center of $\M$, and denote the set of roots of $\G$ relative to $\T_M$ by $\Sigma_M=R(\T_M,\G)$. Let $\Sigma_P$ be the subset of positive roots of $\Sigma_M$ with respect to $\P$. For $\alpha\in\Sigma_M$, we say $\alpha>0$ if $\alpha\in\Sigma_P$ and $\alpha<0$ if $\alpha\notin\Sigma_P$. By the isomorphism of $Hom(\T_M,\G)\otimes\R\cong\fraka_M^*$, we can identify $R(\T_M,\G)$ with a subset of $\fraka_M^*$ and thus we have the restriction map: $R(\T,\G)\to R(\T_M,\G)\cup \{0\}$. We denote $\Delta_M$ to be the set of non-trivial image of $\Delta_0$ under this restriction map. This is a generating set of $\fraka_M^*$.

\subsection{Weyl Group}
Let $W = W^G = N_G(T) / C_G(T)$ be the Weyl group of $G$ with respect to $T$. When $G$ is split and $T$ is a split maximal torus, $C_G(T) =T$. We assume the fixed Euclidean structure on $\fraka_0$ is $W$ invariant, i.e.
\[
\|w x\| = \| x \|, \ \forall w \in W \text{ and } x \in \fraka_0.
\]
By definition, elements of $W$ are $C_G(T)$-cosets in $N_G(T)$. Hence $w=nC_G(T)$ for some $n \in N_G(T)$ and we denote $w \in n$ in this case.
We define the following notations:
\begin{itemize}
 \item for a Levi subgroup $M$ let 
 \[
 \ _MW_M := \{w \in W \mid \ell(w) \leq \ell(w_1ww_2)\ \forall w_1,w_2 \in W^M \};
 \]
 \item for two Levi subgroups $M$ and $M^\prime$ we write 
 \[
 W(M,M^\prime):= \{w \in W \mid \ell(w) \leq  \ell(wu)\ \forall u \in W^M;wMw^{-1} = M^\prime\}.
 \]
 \item $$W(M)=\bigcup_{M' \text{ standard Levi of } G} W(M,M')$$
\end{itemize}

We have the following properties:
\begin{enumerate}
 \item if $w \in W(M,M^\prime)$, then $w^{-1} \in W(M^\prime,M)$;
 \item if $w_1 \in W(M_1,M_2)$ and $w_2 \in W(M_2,M_3)$, then $w_2w_1 \in W(M_1,M_3)$;
 \item $W(M,M)$ is a subgroup of $W$, which we can identify with $N_G(M) / M$.
\end{enumerate}

For any Levi subgroups $M \subset L$ we denote by $w_M^L$ the unique element of maximal length in $W(M) \cap W^L$. When $M=M_0$, we denote $w_0^L := w_{M_0}^L$.

\begin{lem}(Bruhat decomposition)\label{Bruhat decomposition}
    Let $P$ be a parabolic subgroup of $G$ and $M$ its Levi subgroup. The inclusion of $\MWM\to W$ induces a bijection between $\MWM\cong W^M\backslash W/W^M$.
\end{lem}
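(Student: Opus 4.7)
The plan is to establish this classical double-coset result by the standard Coxeter-group argument: show that the composition $\MWM \hookrightarrow W \twoheadrightarrow W^M \backslash W / W^M$ is both surjective and injective. Surjectivity is immediate from the definition of $\MWM$, since every double coset $W^M w W^M$ is a finite set and hence contains an element $w_0$ of minimal length; by construction this element satisfies $\ell(w_0) \leq \ell(w_1 w_0 w_2)$ for all $w_1, w_2 \in W^M$, so $w_0 \in \MWM$.

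The substantive content is injectivity, which I would reduce to the additive length formula
\[
\ell(w_1 w w_2) = \ell(w_1) + \ell(w) + \ell(w_2) \quad \text{for all } w \in \MWM, \; w_1, w_2 \in W^M.
\]
Granting this, if $w, w' \in \MWM$ lie in the same double coset and $w' = w_1 w w_2$, applying the formula to $w'$ and, after inverting, to $w = w_1^{-1} w' w_2^{-1}$ forces $\ell(w_1) = \ell(w_2) = 0$, hence $w_1 = w_2 = e$ and $w = w'$. To prove the additive formula I would first establish the characterization: $w \in \MWM$ if and only if $\ell(sw) > \ell(w)$ and $\ell(ws') > \ell(w)$ for every pair of simple reflections $s, s'$ lying in $W^M$. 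One direction is trivial; the converse uses the basic fact $\ell(sw) = \ell(w) \pm 1$ for a simple reflection $s$, together with a short induction on the length of an element of $W^M$. With this characterization in hand the additive formula follows by induction on $\ell(w_1) + \ell(w_2)$: write $w_1 = s w_1'$ with $\ell(w_1') = \ell(w_1) - 1$, invoke $\ell(sw) > \ell(w)$ and the exchange condition to conclude $\ell(s w_1' w w_2) = 1 + \ell(w_1' w w_2)$, and then symmetrically on the right factor.

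The main obstacle is the combinatorial bookkeeping required to prove the additive length formula: one must track how reduced expressions in the parabolic subgroup $W^M$ interact with those in the ambient Coxeter group $W$ under simultaneous left and right multiplication, and invoke the exchange condition at each induction step in a way that is compatible on both sides. Once this formula is in hand, surjectivity and injectivity are essentially one-line consequences. For the combinatorial core I would follow a standard treatment of Coxeter groups such as Bourbaki or Humphreys, specializing to the relative Weyl group $W = N_G(T)/C_G(T)$ equipped with the simple reflections coming from $\Delta_0$, whose Coxeter structure restricts to $W^M$ via the subset $\Delta_0^M \subseteq \Delta_0$.
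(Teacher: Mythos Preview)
The paper states this lemma without proof, treating it as the classical parametrization of $(W^M,W^M)$-double cosets by minimal-length representatives (as in Bourbaki or Carter). Your overall plan---surjectivity by picking a minimal-length element, injectivity by uniqueness---is the right one, and your characterization of $\MWM$ via $\ell(sw)>\ell(w)$ and $\ell(ws')>\ell(w)$ for simple $s,s'\in W^M$ is correct.

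However, the additive length formula you state,
\[
\ell(w_1 w w_2)=\ell(w_1)+\ell(w)+\ell(w_2)\quad\text{for all }w\in\MWM,\;w_1,w_2\in W^M,
\]
is \emph{false}, and your induction for it breaks down. Take the extreme case $W^M=W$: then $\MWM=\{e\}$, and with $w=e$, $w_1=w_2=s$ any simple reflection, you would get $\ell(e)=2$. More concretely, your inductive step needs $\ell(s\cdot w_1'ww_2)=1+\ell(w_1'ww_2)$, i.e.\ that $s$ increases the length of $w_1'ww_2$; but while left-minimality of $w$ guarantees $w^{-1}$ sends $\Delta_0^M$ into $\Phi^+$, it need not send $\Phi_M^+$ into $\Phi_M^+$, so after applying $w_2^{-1}$ the image of $\alpha_s$ can become negative.

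The fix is to prove uniqueness directly rather than via this formula. One standard route: show that a minimal-length element $w$ of the double coset is characterized by $w(\Delta_0^M)\subset\Phi^+$ and $w^{-1}(\Delta_0^M)\subset\Phi^+$, and that any two elements with this property and lying in the same double coset must coincide (using that $W^M\cap wW^Mw^{-1}$ controls the redundancy in the decomposition $w_1ww_2$). Alternatively, prove the weaker true statement that every element of $W^MwW^M$ admits \emph{some} decomposition $uwv$ with $u\in W^M$, $v\in W^M$, and $\ell(uwv)=\ell(u)+\ell(w)+\ell(v)$; this already forces any other minimal-length element $w'$ in the coset to satisfy $\ell(w')\geq\ell(w)$, and by symmetry $w=w'$.
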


Let $\alpha\in\Sigma$ be a root and we denote $s_\alpha$ be its simple reflection. We remark that the Weyl group of $G$ is generated by $s_\alpha:\alpha\in\Delta$. Let $\alpha\in\Delta_M$, by the construction in \cite[I.1.7]{MW}, we denote $s_\alpha$ be the elementary symmetry of $\alpha$. Although simple reflection and elementary symmetry share the same symbol, it will be clear from the context. Additionally, we will clarify in the paper which type of $s_\alpha$ we are referring to.

\subsection{Some Function Spaces}\label{secfunc}
Let $(V,\|\cdot\|)$ be a Euclidean space and $R>0$. We denote
\[
C_R(V) = \{f:V \rightarrow \C \mid f(v) \ll e^{-R\|v\|} \text{ for all } v \in V\}.
\]
Let $V^*$ be its dual vector space and $V^*_\C = V^* \otimes_\R \C$. Let $r>0$ and let $V^*_{\C,r} = \{\lambda \in V_\C^* \mid \|\mathrm{Re}~\lambda \| <r\}$. We denote
\[
P^r(V^*) = \{ \phi: V^*_{\C,r} \rightarrow \C \mid \sup\limits_{\lambda \in V_{\C,r}^*}|\phi(\lambda)|(1+\|\lambda\|)^N < \infty, N = 1,2,\ldots \}.
\]
Further we denote
\[
P^r(V^*;W) = P^r(V^*) \otimes_\C W.
\]
when $W$ be a finite dimensional $\C$-vector space.

\begin{lem}\label{pw}
Let $f: V \rightarrow \C$. The following conditions are equivalent.
\begin{enumerate}
    \item For all $r < R$ and a differential operator $D$ on $V$ with constant coefficients, $Df \in C_r(V)$.
    \item For all $r < R$ the function $f(v)e^{r\sqrt{1+\|v\|^2}}$ is a Schwartz function on $V$.
    \item The Fourier transform
    \[
\hat{f}(\lambda) = \int_V f(v) e^{\pair{\lambda}{v}} dv
    \]
    of $f$ admits holomorphic continuation to $\{ \lambda \in V_\C^*: \| \mathrm{Re} \lambda \| <R \}$ and belongs to $\bigcap\limits_{r<R} P^r(V^*)$.
\end{enumerate}
\end{lem}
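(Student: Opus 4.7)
The plan is to establish the chain $(1)\Rightarrow(2)\Rightarrow(3)\Rightarrow(1)$. This is essentially a Paley--Wiener theorem for functions with exponential decay rate controlled by $R$, with the smooth weight $e^{r\sqrt{1+\|v\|^2}}$ serving as a differentiable substitute for $e^{r\|v\|}$ (whose lack of smoothness at the origin would spoil the Schwartz condition).

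For $(1)\Leftrightarrow(2)$ the key observation is that every partial derivative of $\sqrt{1+\|v\|^2}$ is uniformly bounded on $V$, whence for any constant-coefficient differential operator $D'$,
\[
\bigl|D' e^{\pm r\sqrt{1+\|v\|^2}}\bigr|\ll e^{\pm r\sqrt{1+\|v\|^2}}\ll e^{\pm r\|v\|}.
\]
Applying the Leibniz rule to $P(v)\,D\bigl[f(v)e^{r\sqrt{1+\|v\|^2}}\bigr]$ (for $(1)\Rightarrow(2)$) or to $D\bigl[f e^{r'\sqrt{1+\|v\|^2}}\cdot e^{-r'\sqrt{1+\|v\|^2}}\bigr]$ (for $(2)\Rightarrow(1)$) then reduces the task to weighted estimates on derivatives of $f$, which are handled by inserting an intermediate exponent $r<r'<R$ to gain a decaying factor $e^{-(r'-r)\|v\|}$ that absorbs any polynomial factor.

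For $(2)\Rightarrow(3)$ the Schwartz bound gives $|f(v)|\ll e^{-r\sqrt{1+\|v\|^2}}$ for every $r<R$, so the integral defining $\hat f(\lambda)$ is absolutely convergent on the tube $\{\|\mathrm{Re}\,\lambda\|<R\}$ and holomorphic there by Morera's theorem. Integration by parts converts any monomial factor $\lambda^\alpha$ into a constant-coefficient differential operator on $f$, and the pointwise bound $|e^{\langle\lambda,v\rangle}|\le e^{\|\mathrm{Re}\,\lambda\|\cdot\|v\|}\le e^{r\sqrt{1+\|v\|^2}}$ is dominated by the Schwartz profile of $f e^{r'\sqrt{1+\|v\|^2}}$ for any $r<r'<R$; this yields rapid decay of $\hat f$ on each horizontal slice, hence $\hat f\in\bigcap_{r<R}P^r(V^*)$.

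For $(3)\Rightarrow(1)$ I invoke Fourier inversion on $iV^*$ and shift the contour: for any $\mu\in V^*$ with $\|\mu\|<R$,
\[
f(v)=(2\pi)^{-\dim V}\int_{\mu+iV^*}\hat f(\lambda)\,e^{-\langle\lambda,v\rangle}\,d\lambda.
\]
Differentiating under the integral produces a polynomial factor $\lambda^\alpha$ which remains integrable by the $P^r$-membership, so $|Df(v)|\ll e^{-\langle\mu,v\rangle}$; choosing $\mu$ of norm $r<R$ dual to $v/\|v\|$ yields $|Df(v)|\ll e^{-r\|v\|}$. The main technical obstacle is justifying the contour shift, which requires uniform $L^1$-control of $\hat f$ on vertical slices of the tube together with the vanishing of boundary contributions at infinity; both follow from the rapid decay encoded in $P^r(V^*)$ but need to be set up carefully as an iterated contour deformation in each coordinate direction, after which the rest of the argument is a routine estimate.
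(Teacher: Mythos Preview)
Your argument is correct and is the standard Paley--Wiener package: Leibniz plus the boundedness of all derivatives of $\sqrt{1+\|v\|^2}$ for $(1)\Leftrightarrow(2)$, integration by parts for $(2)\Rightarrow(3)$, and Fourier inversion with a contour shift in the tube for $(3)\Rightarrow(1)$. The paper itself gives no proof --- it simply cites \cite{LO}, Lemma~2.1 --- so there is nothing to compare against here; your sketch supplies exactly the routine argument the citation stands in for.
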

\begin{proof}
See \cite{LO}, Lemma 2.1.
\end{proof}
Fix a parabolic subgroup $\P = \M \ltimes \U$ of $\G$. For any $f \in C_R(\fraka_0^M)$ we define
\begin{equation}\label{theta}
\theta_f^M(g) = \sum_{\gamma \in P_0 \cap M \mo M} e^{\pair{\rho_0}{H_0(\gamma g)}} f(H_0^M(\gamma g)), g \in \G(\A).
\end{equation}
Given the convergence of the series, it is immediate that
\begin{enumerate}
    \item $\theta_f^M$ is defined on $A_G \U(\A) M \mo \G(\A)$;
    \item $\theta_f^M$ is right $K$-invariant;
    \item $\theta_f^M(ag) = e^{\pair{\rho_P}{H_0(a)}}$ for all $a \in A_M$ and $g \in \G(\A)$.
\end{enumerate}


\begin{lem}\label{lem2.2}
There exists $R_0>0$, for all $R>R_0$, the sum defining $\theta_f^M$ in (\ref{theta}) is absolutely convergent for any $f \in C_R(\fraka_0^M)$. Moreover, for any $N>0$ there exists $R_0$ and $N^\prime$, for all $R>R_0$ and $f \in C_R(\fraka_0^M)$ we have
\[
\sup\limits_{m \in \mathfrak{S}_M^1} |\theta_f^M(mg)|\|m\|^N \ll_{N,f} \|g\|^{N^\prime}, g \in \G(\A).
\]
\end{lem}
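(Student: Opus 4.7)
The plan is to imitate the classical Godement convergence argument for degenerate Eisenstein series, using the exponential decay built into $C_R(\fraka_0^M)$ to dominate the growth of the factor $e^{\pair{\rho_0}{H_0(\cdot)}}$.

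First I verify that the summand $F(g):=e^{\pair{\rho_0}{H_0(g)}}\,f(H_0^M(g))$ descends to a function on $(P_0\cap M)(F)\mo G(\A)$, so that the sum makes sense. For any $p\in (P_0\cap M)(F)\subseteq P_0(F)$ the product formula forces $|\chi(p)|=e^{\pair{\chi}{H_0(p)}}=1$ for every $F$-rational character $\chi$, so $H_0(p)=0$ in $\fraka_0$. The cocycle $H_0(pg)=H_0(p)+H_0(g)$ for $p\in P_0(F)$ then yields $F(pg)=F(g)$, so the series $\sum_{\gamma\in (P_0\cap M)(F)\mo M(F)} F(\gamma g)$ is well posed.

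For absolute convergence I apply reduction theory inside $M$: fix a Siegel domain $\frakS_{M,0}$ for $(P_0\cap M)(F)\mo M(\A)$, so that each coset representative $\gamma g$ may, up to a finite overlap, be moved into $\frakS_{M,0}$. By Iwasawa decomposition inside $M$ the corresponding parameters parametrize a translate of a root-positive lattice-like set $\Lambda\subset \fraka_0^M$. On this set $e^{\pair{\rho_0}{H_0(\gamma g)}}$ is bounded by $e^{\pair{\rho_0}{\mu+H_0(g)}}$ with $\mu\in\Lambda$, while $|f(H_0^M(\gamma g))|\ll e^{-R\|\mu+H_0^M(g)\|}$. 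When $R$ exceeds the finite constant $R_0:=\sup_{\mu\in\Lambda\setminus\{0\}}\pair{\rho_0}{\mu}/\|\mu\|$, the series is majorized by a convergent geometric sum in $\mu$; uniformity in $g$ on compacta is automatic.

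For the growth bound, fix $m\in\frakS_M^1$. By \eqref{5d}, $\|m\|\ll e^{c\|H_0(m)\|}$ for some constant $c$, so $\|m\|^N\ll e^{cN\|H_0(m)\|}$. Applying the previous estimate to $mg$ and absorbing $\|m\|^N$ into the series requires enlarging $R_0$ to some $R_0(N)$; once $R>R_0(N)$, the $f$-decay dominates both $e^{\pair{\rho_0}{H_0(\gamma mg)}}$ and $e^{cN\|H_0(m)\|}$, and summing the residual geometric series gives a finite bound. The polynomial dependence $\|g\|^{N'}$ is then extracted from the $g$-variable via \eqref{5b} and \eqref{5c}, which convert the remaining exponential factor in $\|H_0(g)\|$ into a power of $\|g\|$. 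The main obstacle is making this estimate uniform simultaneously in $\gamma$ and $g$; concretely, one must verify that $\Lambda$ lies in a strict open cone on which $\pair{\rho_0}{\mu}-R\|\mu\|<0$ for $R>R_0(N)$, so that the tail is genuinely a convergent geometric series. The argument parallels the analogous lemma in \cite{LO}.
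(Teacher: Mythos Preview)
Your proposal has the right spirit but contains a genuine gap in the convergence step. You assert that as $\gamma$ ranges over $(P_0\cap M)\mo M$, the parameters $H_0^M(\gamma g)$ ``parametrize a translate of a root-positive lattice-like set $\Lambda\subset\fraka_0^M$,'' and then bound the sum by a geometric series over $\Lambda$. This is not justified: the map $H_0$ is not additive on $M$ (the cocycle $H_0(\gamma g)=H_0(\gamma)+H_0(g)$ holds only for $\gamma\in P_0(\A)$), so there is no reason for the points $H_0^M(\gamma g)$ to lie on a lattice or even to be separated. They can cluster, and the number of $\gamma$ landing in a given shell grows with the shell radius. The correct input, which the paper uses (following \cite{LO}), is a counting lemma of Godement type:
\[
\#\{\gamma\in (P_0\cap M)\mo M:\|H_0^M(\gamma m)\|\le X\}\ll (e^X+\|m\|)^{N_1}
\]
for $m\in\frakS_M^1$. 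One then decomposes the sum into shells $X\le\|H_0^M(\gamma m)\|<X+1$, bounds the summand by $e^{(\|\rho_0\|-R)X}$ on each shell, multiplies by the count, and sums over $X$; convergence for $R$ large follows, and the $\|m\|$-dependence of the count is precisely what produces the polynomial factor in $\|m\|$.

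The paper's proof is also organized differently from yours: rather than attacking the double parameter $(m,g)$ directly, it performs a chain of reductions---first $g=pk$ to reduce to $g\in\P(\A)$, then $p=m_0u'$ to reduce to $g\in\M(\A)$, then $m_0=am'$ to reduce to $g\in\M(\A)^1$, then left $M$-invariance of $\theta_f^M$ to reduce to $g=e$---and only at the end applies the counting lemma in the single variable $m\in\frakS_M^1$. Your direct approach could in principle be repaired, but not without the counting input; the lattice heuristic does not survive contact with reduction theory.

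A second, related issue: your absorption of $\|m\|^N$ into the decay of $f$ is too quick. You write that ``the $f$-decay dominates $e^{cN\|H_0(m)\|}$,'' but $f$ only provides decay in $\|H_0^M(\gamma m g)\|$, which for many $\gamma$ is much smaller than $\|H_0(m)\|$ (indeed there exist $\gamma$ bringing $\gamma m$ arbitrarily close to the identity in the $\fraka_0^M$-direction). The required link between these quantities again comes through the $\|m\|^{N_1}$ in the counting bound, not from any individual term of the series.
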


\begin{proof}
We will prove the second part directly and the first part will follow. The proof is essentially given in \cite[Lemma 2.2]{LO} but we write it in richer details.

First, we reduce the assertion to the case when $g = p \in \P(\A)$.
Suppose for given $N,R,N^\prime,f$ and $C$ we have
\[
\sup_{m \in \mathfrak{S}_M^1}|\theta_f^M(mp)|\|m\|^N \leq C\|p\|^{N^\prime},\text{ for all } p \in P(\A).
\]
Let $g \in G(\A)$ and write $g = pk$. We have
\begin{align*}
\sup_{m \in \mathfrak{S}_M^1}|\theta_f^M(mg)|\|m\|^N =& \sup_{m \in \mathfrak{S}_M^1}|\theta_f^M(mpk)|\|m\|^N \\
=& \sup_{m \in \mathfrak{S}_M^1}|\theta_f^M(mp)|\|m\|^N \\
\leq& C \|p\|^{N^\prime}.
\end{align*}
By assumption and (\ref{5b}), the above is bounded by
\begin{align*}
C \|gk^{-1}\|^{N^\prime}
\leq CC_1 \|g\|^{N^\prime} \|k\|^{N^\prime}
\leq CC_1 \sup_{k \in K} \|k\|^{N^\prime} \|g\|^{N^\prime}. 
\end{align*}
Note that both $C_1$ and $\sup_{k \in K} \|k\|^{N^\prime}$ are independent of $g$.

Second, we reduce the assertion to the case when $p = m_0 \in M(\A)$. Suppose for given $N,R,N^\prime,f$ and $C$ we have
\[
\sup_{m \in \mathfrak{S}_M^1}|\theta_f^M(mm_0)|\|m\|^N \leq C\|m_0\|^{N^\prime}, \text{ for all }  m_0 \in G(\A).
\]
Let $p \in P(\A)$ and write $p = m_0 u^\prime$. We have:
\begin{align*}
\sup_{m \in \mathfrak{S}_M^1}|\theta_f^M(mp)|\|m\|^N =& \sup_{m \in \mathfrak{S}_M^1}|\theta_f^M(mm_0 u^\prime)|\|m\|^N \\
=& \sup_{m \in \mathfrak{S}_M^1}|\theta_f^M(mm_0)|\|m\|^N \\
\leq& C \|m_0\|^{N^\prime}\\
\leq& CC_1 \|p\|^{N^\prime} 
\end{align*}
where the third line is our assumption and the last one follows from (\ref{6}).

Third, we reduce the assertion to the case when $m^\prime \in M(\A)^1$. Suppose for given $N,R,f$ and $C$ we have
\[\sup_{m \in \mathfrak{S}_M^1}|\theta_f^M(mm^\prime)|\|m\|^N \leq C\|m^\prime\|^{N}, \text{ for all } m^\prime \in M(\A)^1 .
\]

Let $m_0 = am^\prime$ where $a \in A_M$, we have
\begin{align*}
\sup_{m \in \mathfrak{S}_M^1}|\theta_f^M(mm_0)|\|m\|^N =& \sup_{m \in \mathfrak{S}_M^1}|\theta_f^M(amm^\prime)|\|m\|^N &\\
=& \sup_{m \in \mathfrak{S}_M^1} e^{\langle \rho_0, H_0(a) \rangle}|\theta_f^M(mm^\prime)|\|m\|^N &\\
\leq& Ce^{\|\rho_0\|}\|a\|^{\|\rho_0\|}\|m^\prime\|^N.
\end{align*}
Since $\|a\|$ has a lower bound, we raise the powers from $N$ to $N_1$ and the above is controlled by
\begin{align*}
C \|a\|\|m^\prime\|^{N_1}
\leq  CC_1 \|am^\prime\|^{N_1N_2}
\end{align*}
for some $C_1$ and $N_2$.

Then we reduce the assertion to the case for the neutral element $e$.
Suppose that for given $N,R,f$ and $C$ we have
\[
\sup_{m \in \mathfrak{S}_M^1}|\theta_f^M(m)|\|m\|^N \leq C.
\]
For any $m^\prime \in M(\A)^1$, and any $m \in \mathfrak{S}_M^1$, there exists $\gamma \in M(k)$ and $m_1 \in \mathfrak{S}_M^1$, such that $mm^\prime = \gamma m_1$. Moreover, By (\ref{5b}) and (\ref{5e}),
there exists $C_1$ such that:
\[
\|m\| \leq C_1 \|m_1\|\|m^\prime\|.
\]
We also observe that $\theta_f^M(mm^\prime) = \theta_f^M(m_1)$. Hence for any $m \in \mathfrak{S}_M^1$ and $m^\prime \in M(\A)^1$:
\begin{align*}
|\theta_f^M(mm^\prime)|\|m\|^N \leq& C_1^N |\theta_f^M(m_1)|\|m_1\|^N\|m^\prime\|^N\\
\leq& \sup_{m_1 \in \mathfrak{S}_M^1} C_1^N |\theta_f^M(m_1)|\|m_1\|^N\|m^\prime\|^N\\
\leq& C_{new} \|m^\prime\|^N.
\end{align*}
Hence
\[
\sup_{m \in S_M^1} |\theta_f^M(mm^\prime)|\|m\|^N \leq C \|m^\prime\|^N.
\]

Lastly, we claim that for all $N \geq 0$, there exists $R>0$, for all $f \in C_R(\fraka_0^M)$, there exists $C>0$, for all $m \in \mathfrak{S}_M^1$, we have
\[
|\theta_f^M(m)|\|m\|^N \leq C.
\]
We will make use of the following fact:
$\#\{\gamma \in P_0 \cap M \mo M: \|H_0^M(\gamma m)\| \leq X\} \leq C(e^X + \|m\|)^{N_1}$ for all $X \geq 0$ and $m \in \mathfrak{S}_M^1$.
First note that (\ref{5d}) and (\ref{2}) imply that there exists $C_1$ and $C_2$, such that
\begin{align}
\|m\| \leq C_1e^{C_2\|H_0^M(m)\|}, \text{ for all } m \in S_M^1, \gamma \in M(k).
\end{align}
Hence we have
\begin{align*}
|\theta_f^M(m)| \|m\|^N \leq & \sum_{\gamma \in P_0 \cap M \mo M} e^{\langle \rho_0, H_0(\gamma m) \rangle} |f(H_0^M(\gamma m)| \|m\|^{-N^\prime} \|m\|^{N+N^\prime}\\
\leq & \sum_{\gamma \in P_0 \cap M \mo M} e^{(\| \rho_0\|-R +C_2) \| H_0(\gamma m)\|} \|m\|^{N^\prime}\\
\leq & \sum_{X=0}^{+\infty}\sum_{\{\gamma|X \leq \|H_0^M(\gamma m)\| \leq X+1\}}e^{(\| \rho_0\|-R +C_2) \| H_0(\gamma m)\|}.
\end{align*}
Let $R$ be such that $R> \|\rho_0\|+N^\prime +C_2$ we have the above
\begin{align*}
\leq & \sum_{X=0}^{+\infty} (\frac{e^{X+1}}{\|m\|} + 1)^{N^\prime} e^{(\|\rho_0\|-R)X}.
\end{align*}
By expanding $(\frac{e^{X+1}}{\|m\|} + 1)^{N^\prime}$ via binomial theorem, we get an upper bound independent of $m$, where we notice that $\|m\|$ admits a lower bound.
\end{proof}

\begin{lem}\label{lem2.3}
For any $N>0$ there exists $R>0$ such that
\[
\sup_{g \in \mathfrak{S}_G^1} \sum\limits_{\gamma \in {P \mo G}} |\phi(\gamma g)| \|g\|^N < \infty
\]
and, in particular,
\[
\sup_{g \in \G(\A)} \sum\limits_{\gamma \in {P \mo G}} |\phi(\gamma g)| < \infty,
\]
for any function $\phi$ on $A_G\U(\A)M \mo \G(\A)$ satisfying
\begin{equation}\label{condition phi}
\sup_{m \in \mathfrak{S}_M^1,a \in A_M, k \in K} \delta_P(a)^{-\frac{1}{2}}|\phi(amk)|\|m\|^t e^{R\|H_P^G(a)\|} < \infty,\ t = 1,2,3,\ldots.
\end{equation}
\end{lem}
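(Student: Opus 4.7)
My plan is to bound each term of the sum by the corresponding summand of a $\theta_f^G$-series (specializing Lemma~\ref{lem2.2} to $M=G$) and apply that lemma to conclude. First I would reduce to $g\in\mathfrak{S}_G^1$: the function $g\mapsto\sum_{\gamma\in P\mo G}|\phi(\gamma g)|$ is $G$-invariant (by reindexing the summation) and $A_G$-invariant (by centrality of $A_G$ combined with the $A_G$-left-invariance of $\phi$); since $\G(\A)=G\cdot A_G\cdot\mathfrak{S}_G^1$, both assertions reduce to bounding $\sum|\phi(\gamma g)|\|g\|^N$ uniformly on $\mathfrak{S}_G^1$, the ``in particular'' statement then following from the uniform lower bound $\|g\|\gg 1$ supplied by (\ref{5a}).

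For each $\gamma\in P\mo G$ I would apply Iwasawa to write $\gamma g=u_\gamma m_\gamma k_\gamma$, decompose $m_\gamma=a_\gamma\mu_\gamma$ with $a_\gamma\in A_M$ and $\mu_\gamma\in\M(\A)^1$, and use reduction theory for $M$ together with the $M$-left-invariance of $\phi$ to choose the coset representative so that $\mu_\gamma\in\mathfrak{S}_M^1$. Hypothesis (\ref{condition phi}) then yields
\[
|\phi(\gamma g)|\ll \delta_P(a_\gamma)^{1/2}\|\mu_\gamma\|^{-t}e^{-R\|H_P^G(a_\gamma)\|}
\]
for any $t$ and the allotted $R$. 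Invoking (\ref{5c}) to replace $\|\mu_\gamma\|^{-t}$ by $e^{-ct\|H_0^M(\gamma g)\|}$ and combining with the $H_P^G$-decay via $\|H_0^G\|\leq\|H_0^M\|+\|H_P^G\|$, I obtain
\[
|\phi(\gamma g)|\ll e^{\pair{\rho_P}{H_P(\gamma g)}}e^{-R'\|H_0^G(\gamma g)\|}
\]
with $R'$ as large as desired upon choosing $t$ and $R$ large.

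Finally, using $\rho_0=\rho_0^M+\rho_P$ along $\fraka_0^G=\fraka_0^M\oplus\fraka_M^G$ together with the uniform lower bound $e^{\pair{\rho_0^M}{H_0^M(\mu_\gamma)}}\gg 1$ on $\mathfrak{S}_M^1$ (valid because $\rho_0^M$ is a positive combination of the simple roots of $M$ and $H_0^M(\mu_\gamma)$ lies in a shifted positive chamber), I would reshape the bound as
\[
|\phi(\gamma g)|\ll e^{\pair{\rho_0}{H_0(\gamma g)}}f(H_0^G(\gamma g)),
\]
with $f(x):=e^{-R'\|x\|}\in C_{R'}(\fraka_0^G)$. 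Since the chosen representatives of $P\mo G$ are pairwise distinct modulo $P_0$ (as $P_0\subset P$), summing over $\gamma\in P\mo G$ bounds $\sum|\phi(\gamma g)|$ by a subsum of $\theta_f^G(g)$, and Lemma~\ref{lem2.2} with $M=G$ (taking $R'$ above its threshold and specializing to $g=e$ in its formulation) gives $|\theta_f^G(g)|\|g\|^N\ll 1$ on $\mathfrak{S}_G^1$. The main obstacle will be the weight-bookkeeping of this last step: merging the polynomial decay in $\|\mu\|$ and the exponential decay in $\|H_P^G\|$ into a single $e^{-R'\|H_0^G\|}$ and recovering the $e^{\pair{\rho_0}{H_0}}$ factor demanded by $\theta_f^G$ rests on the Siegel-set positivity of $\pair{\rho_0^M}{H_0^M}$.
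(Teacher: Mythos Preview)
Your proposal is correct and follows essentially the same approach as the paper: bound $|\phi|$ pointwise by $e^{\pair{\rho_0}{H_0}}f(H_0^G)$ using condition~(\ref{condition phi}), (\ref{5c}), and Siegel positivity of $\pair{\rho_0^M}{H_0^M}$, then feed the resulting $P_0\mo G$-sum into Lemma~\ref{lem2.2} with $\M=\G$. The only organizational difference is that the paper first bounds $|\phi(g)|$ on $\mathfrak{S}_M K$ and then uses $M$-invariance to majorize it for all $g$ by the full sum $\sum_{\delta\in P_0\cap M\mo M}e^{\pair{\rho_0}{H_0(\delta g)}}f(H_0^G(\delta g))$, so that summing over $P\mo G$ gives exactly $\theta_f^G$; you instead choose a $g$-dependent representative for each $P$-coset and land on a subsum of $\theta_f^G$, which is equally valid since all terms are nonnegative.
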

\begin{proof}
We show for $N$ large, which is enough. Let $f(v) = e^{-R\|v\|}$ for $v \in \fraka_0^G$. The condition (\ref{condition phi}) together with (\ref{5c}) implies
\[
|\phi(g)| \ll_{\phi,R} e^{\pair{\rho_P}{H_P(g)}}f(H_0^G(g)),\ g \in \mathfrak{S}_M K.
\]
It follows that
\[
|\phi(g)| \ll_{\phi,R} \sum_{\gamma \in P_0 \cap M \mo M} e^{\pair{\rho_0}{H_0(\gamma g)}} f(H_0^G(\gamma g)),\ g \in \G(\A).
\]
Therefore,
\[
\sum_{\gamma \in P \mo G} |\phi(\gamma g)| \ll_{\phi,R} \sum_{\gamma \in P_0 \mo G} e^{\pair{\rho_0}{H_0(\gamma g)}} f(H_0^G(\gamma g)),\ g \in \G(\A).
\]
The lemma now follows from Lemma \ref{lem2.2} with $\M=\G$.
\end{proof}

Let $\Au_P^{mg}(G)$ be the space of continuous functions $\varphi$ on $\U(\A)M \mo \G(\A)$ where
\begin{enumerate}
    \item[(1)] $\varphi$ is of moderate growth, i.e. there exists $c,r \in \R$ such that for all $g \in \G(\A)$, we have $|\varphi(g)| \leqslant c \|g\|^r$;
    \item[(2)] $\varphi(ag) = e^{\pair{\rho_P}{H_0(a)}}$ for all $a \in A_M, g \in \G(\A)$.
\end{enumerate}
Let $\Au_P^{rd}(G)$ be the subspace of $\Au_P^{mg}(G)$ consisting of $\varphi$ such that for all $N>0$
\[
\sum\limits_{m \in \mathfrak{S}_M^1, k \in K} |\varphi(mk)|\|m\|^N < \infty.
\]
For $\varphi \in \Au_P^{mg}(G)$ and $\lambda \in \fraka_{M,\C}^*$ let
\[
\varphi_\lambda(g) = e^{\pair{\lambda}{H_P(g)}}\varphi(g), g \in \G(\A).
\]

For any $R>0$ let $C_R(\U(\A)M \mo \G(\A))$ be the space of continuous functions $\phi$ on $A_G \U(\A) M \mo \G(\A)$ satisfying (\ref{condition phi}) such that $\phi(\cdot g)$ is a cuspidal function on $M \mo \M(\A)$ for all $g \in \G(\A)$. For $R>0$ and $\phi \in C_R(U(\A)M \mo \G(\A))$ we define
\[
\theta_\phi(g) = \sum_{\gamma \in P \mo G} \phi(\gamma g).
\]
By Lemma \ref{lem2.3}, the series converges for large enough $R$. For any $\lambda \in \fraka_{M,\mathbb{C}}^*$ with $\| \mathrm{Re} \lambda \| < R$ we write
\[
\phi[\lambda](g) = e^{-\pair{\lambda}{H_P(g)}}\int_{A_G \mo A_M} e^{-\pair{\lambda+\rho_P}{H_P(g)}}\phi(ag) da.
\]
We have $\phi[\lambda] \in \Au_P^{rd}(G)$.

Let $C_R^\infty(\U(\A)M\mo \G(\A)$ be the subspace of smooth functions in $C_R(\U(\A)M\mo \G(\A)$. For $\phi \in C_R^\infty(\U(\A)M\mo \G(\A)$, we have
\[
\phi(g) = \int_{\lambda_0 + i(\fraka_M^G)^*} \phi[\lambda]_\lambda(g) d\lambda
\]
for any $\lambda_0 \in (\fraka_M^G)^*$ with $\|\lambda_0\| <R$. Moreover, it easily follows from Lemma 2.1 and its proof that for any $R^\prime <R$ and $N>0$ we have
\begin{equation}\label{(10)}
\sup_{m \in \mathfrak{S}_M^1,k\in K, \lambda \in (\fraka_M^G)_\C^*:\|\mathrm{Re} \lambda\| \leq R^\prime} |\phi[\lambda](mk)|(\|m\|+\|\lambda\|)^N < \infty.
\end{equation}
Thus, we may think of $\phi \in C_R^\infty(\U(\A)M \mo \G(\A))$ as a holomorphic map on $\{\lambda \in (\fraka_M^G)_\C^*:\| \mathrm{Re} \lambda \| <R \}$ with values in $\Au_P^{rd}(G)$ satisfying (\ref{(10)}).

\subsection{Unitary Groups and Symplectic Groups}
Let $F$ be a number field and $E/F$ be a quadratic extension.
Let $U_{2n}$ be the quasi-split unitary group over $F$ with respect to $E/F$ of rank $n$ and we fix the embedding
\[
\Un{2n} = \{g \in \ResGL{2n} \mid {}^t\bar{g}J_n g = J_n\},
\]
where
\[
J_n = \begin{pmatrix} 0 & w_n \\ -w_n & 0 \end{pmatrix}
\]
and $w_n = (\delta_{i,n+1-j}) \in \GLn{n}$ is the permutation matrix with ones on every entry of the anti-diagonal and zeros elsewhere. 
Let $\Spn{2n}$ be the fixed points under the Galois action, i.e. 
\begin{align*}
\Spn{2n} &= \{h \in \Un{2n}| h = \bar{h}\}\\
&= \{g \in \GLn{2n} \mid {}^t{g}J_n g = J_n\}.
\end{align*}

Let $^*: \GLn{n}\to\GLn{n}$ be the automorphism given by $g\mapsto g^*=w_n\;^tg^{-1} w_n$. The embedding $\imath:\ResGL{n}\to\Un{2n}$, $g\mapsto \diag(g,g^*)$ identifies $\ResGL{n}$ with the Siegel Levi subgroup of $\Un{2n}$.

We take $\B_n$ as the upper triangular Borel of $\Un{2n}$, this subgroup admits a Levi decomposition $\B_n=\T_n\ltimes \N_n$, where $\T_n$ is the diagonal maximal split torus, i.e.
\[
\T = \{\diag(a_1,\ldots,a_n,a_n^{-1},\ldots,a_1^{-1}) \mid a_i \in \mathbb{G}_m \forall\ i = 1,2,\ldots,n\}.
\]
and $\N_n$ is the subgroup of upper unitriangular matrices in $\B_n$. The standard parabolic subgroups of $\G$ are classified by its root system $R(T,G)$. For every tuple $\gamma = (n_1,n_2,\ldots,n_k;r)$ where $k,r\geq 0$, $n_1,\ldots,n_k > 0$, and $\sum\limits_{i=1}^k n_i + r = n$, we associate the standard parabolic subgroup $\P = \P_\gamma = \M_\gamma \ltimes \U_\gamma$ 
where $\M_\gamma \cong \prod\limits_{i=1}^k\ResGL{n_i}\times \Un{2r}$. The isomorphism is given by the map 
\begin{align*}
    \imath:  \prod_{i=1}^k\ResGL{n_i}\times \Un{2r}&\to \M_\gamma\\
    (g_1,\cdots,g_k;h)&\mapsto \mathrm{diag}(g_1,\ldots,g_k,h,g_1^*,\ldots,g_k^*).
\end{align*}

\subsection{The Symmetric Space}
Let $\G$ be a connected reductive group and $\theta$ an involution on $\G$. We associate the symmetric space $$\X=\X(\G,\theta)=\{g\in \G: g\theta(g)=e\}$$
with the $\G$-action by $\theta$-twisted conjugation
\[
(g,x) \mapsto g\cdot x = gx\theta(g)^{-1},\quad g\in \G,x\in \X.
\]

We follow the set up in \cite{LO}, except for that we denote $2n$, instead of $n$, as the index of our unitary groups and symplectic groups.

Fix $n \in \mathbb{N}$. Let $G= \mathbf{U}_{2n}$. Let 
\[
\X=\{g \in \G \mid g \bar{g} = e\}
\]
with the $\G$-action by $\theta$-twisted conjugation
\[
(g,x) \mapsto g\cdot x = gx\bar{g}^{-1},\quad g\in \G,x\in \X.
\]
For every $x \in \X$ and $\Q$ subgroup of $\G$, denote $\Q_x = \mathrm{Stab}_\Q(x)$, an algebraic group defined over $F$. 

\begin{example}
\begin{enumerate}\label{two examples of single orbit}
    \item Let $\G=\GLn{2n}$ and $\theta(g)=J_n {}^tg^{-1}J_n^{-1}$. In this case, $\mathbf{G_e}=\Spn{2n}$ and $\X=\{g\in \G:gJ_n=J_n{}^tg\}$. It is well known that $X=G\cdot e$.\label{GL_2n, twist by J_n is single orbit.}
    \item Let $\G=\ResGL{n}$ and $\theta(g)=\bar g$. In this case, $G=\operatorname{GL}_n(E)$ and $G_e=\operatorname{GL}_n(F)$. By Hilbert theorem 90, we have $X=G\cdot e$.
\end{enumerate}
\end{example}

In this paper, we consider the symmetric space $\X=\X(\Un{2n},\theta)$ where $\theta(g)=\bar g$. In particular, we denote $\mathbf{H} = \G_e = \mathbf{Sp}_{2n}$.

We refer a lemma from \cite[\S 3.2, Lemma 4]{MO}.

\begin{lem}\cite[\S 3.2 Lemma 4]{MO}\label{X(U_2n,theta) is single G orbit.}
For $\X = X(\U_{2n},\theta)$, where $\theta(g)=\bar g$, we have
\[
X = U_{2n}\cdot e.
\]
Hence $X \cong G/H$.
\end{lem}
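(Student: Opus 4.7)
The plan is to reduce the claim to the vanishing $H^1(F,\Spn{2n})=0$. The first step is to establish the geometric version $\X(\bar F)=\G(\bar F)\cdot e$. I would start from the defining relation ${}^t\bar g\, J_n g=J_n$ of $\Un{2n}$, from which $\bar g=J_n^{-1}(g^{-1})^t J_n$. Since $w_n^2=\mathrm{id}$ gives $J_n^2=-\mathrm{id}$ and hence $J_n^{-1}=-J_n$, this simplifies to $\bar g=J_n(g^{-1})^tJ_n^{-1}$. Consequently, after base change to $\bar F$ (where $E\otimes_F\bar F\cong\bar F\times\bar F$ splits the Weil restriction), the pair $(\G,\theta)$ is identified with $(\GLn{2n},\ g\mapsto J_n(g^{-1})^tJ_n^{-1})$, which is precisely the setting of Example~\ref{two examples of single orbit}(1). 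That example yields $\X(\bar F)=\G(\bar F)\cdot e$, and the orbit map $g\mapsto g\bar g^{-1}$ thereby descends to an isomorphism $\G/\H\xrightarrow{\sim}\X$ of $F$-schemes.

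Second, I would pass from $\bar F$ to $F$ via non-abelian Galois cohomology. The short exact sequence $1\to\H\to\G\to\X\to 1$ of pointed $F$-varieties yields the exact sequence of pointed sets
\[
\G(F)\longrightarrow \X(F)\longrightarrow H^1(F,\H)\longrightarrow H^1(F,\G),
\]
so the set of $\G(F)$-orbits on $X=\X(F)$ is parametrized by $\ker\!\bigl(H^1(F,\Spn{2n})\to H^1(F,\Un{2n})\bigr)$.

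Third, I would invoke the classical vanishing $H^1(F,\Spn{2n})=0$, which is equivalent to the fact that every non-degenerate alternating bilinear form on a $2n$-dimensional $F$-vector space is equivalent to the standard symplectic form; this is proved by induction on $n$ via the construction of a symplectic basis. This kills the above kernel, so $X=U_{2n}\cdot e$, and the identification $X\cong G/H$ then follows because $\H$ is the scheme-theoretic stabilizer of $e$. The main technical point is the explicit reduction of $(\G,\theta)_{\bar F}$ to Example~\ref{two examples of single orbit}(1); once that identification is in hand, the remaining cohomological ingredients are entirely standard.
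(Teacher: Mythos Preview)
Your argument is correct. The reduction over $\bar F$ to Example~\ref{two examples of single orbit}(\ref{GL_2n, twist by J_n is single orbit.}) via the splitting $E\otimes_F\bar F\cong\bar F\times\bar F$ is the right move, and once $\G/\H\xrightarrow{\sim}\X$ is an isomorphism of $F$-schemes the long exact sequence in non-abelian Galois cohomology together with $H^1(F,\Spn{2n})=0$ finishes the job.

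Note, however, that the paper does not actually prove this lemma: it simply cites \cite[\S3.2, Lemma~4]{MO} and moves on. So there is no ``paper's own proof'' to compare against here. Your write-up is a clean, self-contained version of the standard Galois-cohomological argument one would expect to find in that reference; in particular, the key input $H^1(F,\Spn{2n})=0$ (equivalently, the uniqueness of the symplectic form in a given dimension) is exactly the mechanism behind the single-orbit statement. One small stylistic point: the phrase ``short exact sequence $1\to\H\to\G\to\X\to 1$ of pointed $F$-varieties'' is slightly informal since $\X$ is not a group, but your intended meaning---the fibration $\G\to\G/\H\cong\X$ and its associated exact sequence of pointed sets---is clear and correct.
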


We take $\P_0$ to be the Borel subgroup $\mathbf{B} = \mathbf{B}_{2n}$. We take the maximal split torus $\mathbf{T}$ in $\mathbf{B}$ as the diagonal one as above. The Lie algebra $\fraka_T^*$ is naturally identified with $\R^{2n}$ and the identification is an isomorphism of $\R$-vector space isomorphism. Note that $\fraka_T^* = \fraka_{M_0}^*$ and we also denote them as $\fraka_0^*$ according to our general notions related to reductive groups.

Let $\gamma = (n_1,\ldots,n_k;r)$ with $\sum_{i=1}^k n_i + r = n$. For $\M = \M_\gamma$ the space $\fraka_M^* \cong \R^k$ is embedded in $\fraka_T^* \cong \R^{n}$ as elements of the form
\[
(\overbrace{\lambda_1,\ldots,\lambda_1}^{n_1},\ldots,\overbrace{\lambda_k,\ldots,\lambda_k}^{n_k},\overbrace{0,\ldots,0}^{r}).
\]

Under the identification $\fraka_T^* \cong \R^{n}$ we have $\Sigma = \{\pm e_i \pm e_j:1\leq i\leq j\leq n\}$, where $\{e_i\mid 1 \leq i \leq 2n\}$ is the standard basis of $\R^{2n}$. Also, $\Delta_0 = \{\alpha_1,\ldots,\alpha_{2n}\}$, where $\alpha_i = e_i - e_j$ when $i \neq 2n$ are the short simple roots, and $\alpha_{2n} = 2 e_{2n}$ is the long simple root.

\section{Orbit analysis}
\subsection{Involutions on the Weyl Group}
Let $W$ be a Weyl group of a group $G$ with basis $\Delta$ of simple roots. We denote the set of involutions in $W$ by $W[2]=\{w\in W:w^2=e\}$. 
\begin{defn}
    An involution $w\in W[2]$ is called minimal if there exists a Levi subgroup $M$ of $G$ such that $w=w_0^M$, the longest element of $W_M$, and $w\alpha=-\alpha$ for all $\alpha\in \Delta_0^M$.
\end{defn}

\begin{example}\label{min inv of S_n}
    The minimal involution in $S_n$, which is the Weyl group of $\GLn{n}$, with the standard basis $\Delta=\{e_i-e_{i+1}\}$, are the products of disjoint simple reflections. That is, $w$ is minimal in $S_n$ if and only if $w=s_{i_1}\cdots s_{i_k}$ where each pair $s_{i_u}$ and $s_{i_v}$ are disjoint transpositions of the form $s_{i}=(i,i+1)$ for $1\leq u\neq v\leq k$. 
\end{example}

Let $\mathfrak{W}_n$ be the signed permutation group in $n$ variable. We realize it as
\[
\mathfrak{W} = \mathfrak{W}_n = S_n \ltimes \Xi_n
\]
where $\Xi_n$ is the group of subsets of $\{1,\ldots,n\}$ with symmetric difference as multiplication. The action of $S_n$ on $\Xi_n$ is given by
\[
\tau \cdot \setc  =\tau\setc\tau^{-1}, \ \tau \in S_n,\ \setc \in \Xi_n.
\]
It is a classical result from Lie algebra that the Weyl group $W = W(G,T) = \langle s_{\alpha_1},\ldots,s_{\alpha_n} \rangle$ of $\U_{2n}$ is isomorphic to the signed permutation group $\mathfrak{W}_n$, under the identification
\begin{enumerate}
 \item $\tau(e_i) = e_{\tau_i}$,
 \item $\setc(e_i) = \begin{cases} e_i,\ i \notin \setc\\ -e_i, \ i \in \setc \end{cases}$.
\end{enumerate}

We observe that $$\frakW_n[2]=\{\tau\frakc:\tau\in S_n, \tau(\frakc)=\frakc\}.$$
With the understanding of the structure of semidirect product, we can further prove that the set of minimal involutions in $\frakW_n$ is
\begin{align}\label{min inv of U_2n}
    \{\tau\frakc_{k,n}: 0\leq k \leq n,\; \tau \text{ is a minimal involution of }S_k,\; c_{k,n}=\{k+1,\cdots,n\}\}.
\end{align}
Here $S_k$ embedded into $S_n$ via fixing $c_{k,n}$ point-wisely.

Let $w = \tau \setc \in \frakW_n$. Define the following subsets of $\{1,\ldots,n\}$:
\begin{itemize}
 \item $\setc_+(w) = \{i \in \setc \mid \tau(i) = i\}$;
 \item $\setc_-(w) = \{i \notin \setc \mid \tau(i) = i\}$;
 \item $\setc_{\neq}(w) = \{i \mid \tau(i) \neq i\}$;
 \item $\setc_<(w) = \{i \mid i < \tau(i)\}$.
\end{itemize}
Note that $\setc_{\neq}(w)$ and $\setc_<(w)$ depend only on $\tau$.

\begin{example}
    Let $w=\tau\frakc_{k,n}$ be a minimal involution in $\frakW_n$. Then, $\frakc_+(w)=\frakc_{k,n}$. Also, according to \ref{min inv of S_n} and \ref{min inv of U_2n}, we have a partition
    $$\{1,\cdots, k\}=\frakc_-(w)\sqcup \bigsqcup_{i\in\frakc_<(w)}\{i,i+1\}$$
    and 
    $$\tau=\prod_{i\in\frakc_<(w)} s_i$$
    where $s_i$ is defined in Example \ref{min inv of S_n}.
\end{example}

\subsection{P-Orbits}

Let $P$ be a parabolic subgroup of $G$, and let $x\in X$. We define the map $$\imath_P: P\backslash X\to W[2]\cap \MWM$$ via the Bruhat decomposition (\ref{Bruhat decomposition}). To be more precise, $[x]_P\subseteq PxP$ and by the Bruhat decomposition, there exist a unique element $w\in\MWM$ such that $PxP=PwP$ and we let $\imath_P([x]_P)=w$. For any $x\in X$, $x\bar{x}=e$ and also $P$ is stable under the Galois conjugation, we deduce $PxP=Px^{-1}P$. If $w\in\MWM$ is the element such that $PxP=PwP$, then $x^{-1}\in Pw^{-1}P$ and thus $Px^{-1}P=Pw^{-1}P$. So, $PwP=PxP=Px^{-1}P=Pw^{-1}P$. By the uniqueness of Bruhat decomposition, we have $w=w^{-1}$ and $\imath_P$ is well-defined.

\begin{lem}\label{intersection of B orbit and N_G(T) is non-empty}
There is a bijection between the set of $B$-orbits in $X$ and the set of $T$-orbits in $N_G(T)$ via the map $[x]_B\mapsto N_G(T)\cap [x]_B$.
\end{lem}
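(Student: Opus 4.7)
The plan is to establish the bijection in three parts: well-definedness---$N_G(T)\cap[x]_B$ is non-empty and a single $T$-orbit under twisted conjugation (which reduces to ordinary conjugation here since $\T$ is $F$-split, so $\bar t=t$)---together with injectivity and surjectivity.

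The main obstacle is non-emptiness. Starting from the Bruhat decomposition $x\in BwB$ for a unique $w\in W$, the relation $x\bar x=e$ combined with the trivial Galois action on $W$ (as $\T$ is $F$-split) forces $BwB=Bw^{-1}B$, hence $w=w^{-1}\in W[2]$; this is precisely the well-definedness of $\imath_B$ noted in the paragraph preceding the lemma. Fix a lift $\dot w\in N_G(T)$, set $U_w^-:=U\cap\dot w U^-\dot w^{-1}$, and use the refined decomposition $BwB=U_w^-\cdot\dot w\cdot TU$ to write $x=u_0\dot w tu$ uniquely. Twisted conjugation by $b_1:=u_0^{-1}\in U$ yields $y:=b_1 x\bar b_1^{-1}=\dot w t(u\bar u_0)\in\dot w TU\cap X$, so it suffices to handle $x=\dot w tu$. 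Expanding $x\bar x=e$ using $\dot w^2\in T$ (since $w^2=e$), $s:=\dot w^{-1}\bar{\dot w}\in T$ (since $\bar{\dot w}$ and $\dot w$ both lift $w$), and the splitting $\dot w^{-1}U\dot w=(\dot w^{-1}U\dot w\cap U)\cdot(\dot w^{-1}U\dot w\cap U^-)$, I would reduce to solving a twisted equation of the form $\bar b_2=(\dot w t)^{-1}b_2(\dot w t)\cdot u$ for $b_2$ in $U\cap\dot w U\dot w^{-1}$. Since $U$ is unipotent in characteristic zero, this equation admits a solution by a fixed-point argument along the descending central series, producing the required $b_2$.

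For the single-$T$-orbit half, take $n_1,n_2\in N_G(T)\cap[x]_B$ with $n_2=bn_1\bar b^{-1}$, $b=tu\in TU=B$. Set $n_2':=un_1\bar u^{-1}$; then $n_2'=t^{-1}n_2 t\in N_G(T)$ (using $\bar t=t$). Since $n_2'\in Un_1 U\subseteq BwB$ (with $w$ the Weyl class of $n_1$), the Bruhat fact $Un_1 U\cap N_G(T)=\{n_1\}$---derivable from the uniqueness of the refined form $BwB=U_w^-\dot w TU$ by forcing both the $U_w^-$-component and the $T$-component of any element of $Un_1 U\cap N_G(T)$ to match those of $n_1$---gives $n_2'=n_1$, so $n_2=tn_1 t^{-1}$ and $n_1,n_2$ lie in the same $T$-orbit.

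Injectivity is then immediate (two $B$-orbits sharing an element in $N_G(T)$ must coincide), and surjectivity is clear since for any $n\in N_G(T)\cap X$ the orbit $[n]_B$ intersects $N_G(T)$ in exactly $T\cdot n$ by well-definedness. The chief technical input is therefore the unipotent calculation in the non-emptiness step; alternatively, one may bypass the direct computation by appealing to a version of Springer's theorem on $B$-orbits of a symmetric variety, which already guarantees a representative in $N_G(T)$ for each orbit.
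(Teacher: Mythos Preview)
The paper does not give a self-contained proof of this lemma; it simply cites \cite[Lemma 3.2]{LO}. Your proposal therefore goes well beyond what the paper itself does, supplying the direct argument that the citation stands in for. The strategy you outline---reduce to a representative in $\dot w\,TU$ via the refined Bruhat cell, then use the cocycle condition $y\bar y=e$ together with a Lang--Steinberg/descending-central-series argument on the unipotent group $U_w^+=U\cap\dot w U\dot w^{-1}$ to kill the remaining unipotent part---is exactly the standard route (Springer, Richardson--Springer, Helminck--Wang, and ultimately the argument underlying the cited lemma in Lapid--Offen). Your treatment of the single-$T$-orbit step via $Un_1U\cap N_G(T)=\{n_1\}$ is clean and correct.

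One point deserves tightening. In the non-emptiness step you write the target equation as $\bar b_2=(\dot w t)^{-1}b_2(\dot w t)\cdot u$ with $b_2\in U_w^+$, but as stated $u$ still lies in all of $U$, so the two sides do not a priori live in the same group. What makes the argument go through is that the relation $y\bar y=e$, once expanded using $\dot w^2\in T$ and $\bar{\dot w}\in\dot w T$, forces the $U_w^-$-component of $u'$ (and correspondingly the element $a:=n\bar n\in T$) to be constrained in exactly the way needed so that the equation becomes a genuine twisted-conjugacy problem inside $U_w^+$. You allude to this (``expanding $x\bar x=e$ \ldots\ and the splitting $\dot w^{-1}U\dot w=(\dot w^{-1}U\dot w\cap U)\cdot(\dot w^{-1}U\dot w\cap U^-)$''), but the reduction is not automatic and should be made explicit; otherwise the fixed-point argument on $U_w^+$ is being applied to an equation that does not yet live there. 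With that caveat, the proposal is sound and matches the approach behind the reference the paper invokes.
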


We refer to \cite[Lemma 3.2]{LO} for the proof of this lemma. 

\subsection{M-admissible Orbits}
Let $\P=\M\ltimes \U$, and denote $\pr_M:\P\to\M$ the projection to the Levi part of $\P$. Given two parabolic subgroup $\Q_i=\L_i\ltimes\V_i$, $i=1,2$, $\pr_{L_1}(\Q_1\cap\Q_2)$ is a parabolic subgroup of $\L_1$. For $w\in {}_{M}W_M $ we define $\P(w)=\pr_M(\P\cap w\P w^{-1})=\M\cap w\P w^{-1}$. Note that $\P(w)$ is a standard parabolic subgroup of $\M$ with Levi decomposition 
\begin{align*}
    \P(w)=(\M\cap w\M w^{-1})\ltimes(\M\cap w\U w^{-1}).
\end{align*}
We denote $\M(w):=\M\cap w\M w^{-1}$ and $\U(w):=\M\cap w\U w^{-1}$. 
By Bruhat decomposition, given any $g\in G$, there exist a unique $w\in {}_MW_M$ such that $w$ and $g$ are in the same double $P$ coset, i.e. $PwP=PgP$. Let $p\in P$ be such that $g\in pwP$. Then we have
$$\P\cap g\P g^{-1}=p(\P\cap w\P w^{-1})p^{-1},$$
and thus
$$\pr_M(\P\cap g\P g^{-1})=\pr_M(p)\P(w)\pr_M(p)^{-1}.$$
\begin{defn}
    The following statements are equivalent:
    \begin{enumerate}
        \item $\pr_M(\P\cap g\P g^{-1})=\M$,
        \item $(\P\cap g\P g^{-1})\U=\P$,
        \item $\P(w)=\M$,
        \item $\M\cap w\M w^{-1}=\M$,
        \item $\M\cap w\U w^{-1}=1$,
        \item $w\M\subseteq \N_\G(\M)$.
    \end{enumerate}
    If one of these conditions is satisfied, we say $g\in G$ is $M$-admissible.
\end{defn}

We observe that the $M$-admissibility of the element $g\in G$ depends solely on the double coset $PgP$. The fact $PgP=P\Bar{g}P=PwP$ indicates that the condition $\pr_M(\P\cap g\P\bar{g}^{-1})=M$ also satisfies $M$-admissibility, and $g$, $\bar{g}$ lie in the same orbit.

\begin{lem}\cite[Lemma 3.6]{LO}\label{Lem 3.6}
    Let $x\in X$ and $w=\imath_P(x)$. $wM(w)\cap [x]_P$ is non-empty. 
\end{lem}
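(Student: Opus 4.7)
The plan is to construct an element of $[x]_P \cap wM(w)$ by two successive reductions, each exploiting the freedom of the $\theta$-twisted $P$-action on $X$. First, since $PxP = PwP$ by the definition of $w = \imath_P(x)$, I write $x = p_1 w p_2$ with $p_1, p_2 \in P$, fixing once and for all a lift of $w$ in $N_G(T)$ whose matrix entries lie in $\{0,\pm 1\}$. (Such a lift exists because the Weyl group of $\Un{2n}$ realizes as the signed permutation group $\frakW_n$, whose elements admit permutation-sign matrix representatives; in particular $\bar w = w$.) Then the element
\[
p_1^{-1} \cdot x \;=\; p_1^{-1}\,x\,\overline{p_1} \;=\; w\bigl(p_2\,\overline{p_1}\bigr) \;\in\; [x]_P \cap wP
\]
allows us to replace $x$ by a representative of the form $x = wq$ with $q \in P$.

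Next, the condition $x\bar x = e$ becomes $wqw\bar q = e$, i.e., $\bar q^{-1} = wqw$. Since the right-hand side lies in $wPw^{-1}$ and both $P$ and $wPw^{-1}$ are Galois-stable, we deduce $q \in P \cap wPw^{-1} =: Q$. The group $Q$ has a Levi decomposition $Q = M(w)\ltimes R$ with $M(w) = M \cap wMw^{-1}$ as the Levi part and $R$ as its unipotent radical (containing $U(w) = M \cap wUw^{-1}$ and $U \cap wPw^{-1}$). Among elements of $P$, precisely those in $Q$ preserve the slice $wP$ under $\theta$-twisted conjugation, and for $p = wp'w^{-1} \in Q$ (with $p' \in P$) one computes
\[
p \cdot (wq) \;=\; pwq\,\bar p^{-1} \;=\; w\bigl(p'\,q\,\sigma(p')\bigr), \qquad \sigma(p') := w\,\overline{p'}^{-1}\,w^{-1},
\]
where $\sigma$ is an anti-involution of $Q$ preserving both $M(w)$ and $R$. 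The constraint from the second reduction is equivalent to $\sigma(q) = q$.

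The remaining task is: given $q \in Q$ with $\sigma(q) = q$, find $p' \in Q$ with $p'\,q\,\sigma(p') \in M(w)$. I would proceed by induction along the descending central series of $R$. Writing $q = mr$ with $m \in M(w)$ and $r \in R$, the relation $\sigma(q)=q$ yields compatibility between $m$ and $r$ layer by layer, and at each stage one solves for an element in the appropriate layer of $R$ whose action pushes $r$ one step deeper in the filtration; nilpotence of $R$ terminates the process in finitely many steps. The main obstacle is the bookkeeping in this final step: one must explicitly describe the action of $\sigma$ on the root subgroups comprising $R$ (it blends Weyl-conjugation by the involution $w$ with Galois conjugation from $E/F$) and verify solvability of the absorption equation at each layer. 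This parallels the proof of \cite[Lemma 3.6]{LO} in the symplectic setting, where the analogous absorption step reduces to linear algebra after suitable unipotent reparametrizations.
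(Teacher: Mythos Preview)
Your approach diverges from the paper's (and from \cite[Lemma 3.6]{LO}, which it follows). The paper does not attempt a direct unipotent absorption; instead it invokes Lemma~\ref{intersection of B orbit and N_G(T) is non-empty} to produce representatives of $[x]_B\subseteq [x]_P$ inside $N_G(T)$, selects one whose image $w'\in W$ has minimal length, and then argues via a reduced-expression factorisation $w'=w_1 w'' w w_2$ (with $w_1^{-1},w_2\in W^M$ left $M(w)$-reduced and $w''\in W^{M(w)}$) that any preimage of $w'$ in $N_G(T)$ already lies in $wM(w)$. The argument is purely Weyl-group-combinatorial once the Richardson--Springer type input of Lemma~\ref{intersection of B orbit and N_G(T) is non-empty} is in hand.

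Your first two reductions---to a representative $x=wq$ with $q\in Q=P\cap wPw^{-1}$ and the derivation of the symmetry constraint on $q$---are correct. The gap is the final absorption, which you only sketch. Your appeal to ``the proof of \cite[Lemma 3.6]{LO}'' for this step is misplaced: that proof uses exactly the Weyl-combinatorial method above, not layer-by-layer absorption, so the citation does not back your claim. To finish your route you must actually solve, on each graded piece of the descending central series of $R$, an equation of the shape $\mathrm{Ad}(m_0^{-1})(t)\cdot r_0\cdot \sigma(t)=e$ (writing $q=m_0 r_0$). In characteristic zero this is feasible---the symmetry constraint forces $\sigma(r_0)=\mathrm{Ad}(m_0)(r_0)$ on the abelianised layer, and one may take $t$ in exponential coordinates to be $\tfrac{1}{2}\mathrm{Ad}(m_0)(r_0)$ and iterate---but it must be written out. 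You also need to track the correction $\sigma(q)=w^2 q$ rather than $\sigma(q)=q$: your chosen lift has $\bar w=w$ but not $w^2=e$ in general, and that discrepancy propagates through every layer of the induction.
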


\begin{proof}
    Since $\imath_P(x)\in W[2]$, so $w^2=1$. Let $w'\in W$ be the element of minimal length in the image of $[x]_P\cap N_G(T)$ under the quotient map $N_G(T)\to W$ (domain is non-empty by \ref{intersection of B orbit and N_G(T) is non-empty}). Noticed that ${(w')}^2=1$ and $PwP=Pw'P$. We claim that there exists a reduced expression $w'=w_1w''ww_2$, with $w_1^{-1}$, $w_2\in W^{M}$ both left $M(w)$-reduced and $w'' \in W^{M(w)}$.

    We claimed that any element in the preimage of $w'$ will contain in $wM(w)$. (See \cite[Lemma 3.6]{LO}).
\end{proof}

\begin{lem}\cite[Lemma 3.7]{LO}\label{lem 3.7}
    Let $w\in \MWM$, $x\in wM(w)\cap X$ and let $\RR(x)$ be the unipotent radical of $\P_x$. Then $\U(w)$ is normal subgroup of $\pr_M(\P_x)$ and $\U(w)\subseteq \pr_M(\RR(x))$. 
\end{lem}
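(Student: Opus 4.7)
The plan is to proceed in three stages: first show that $\pr_M(\P_x) \subseteq \P(w) = \M(w) \ltimes \U(w)$; then reduce the normality statement to the containment $\U(w) \subseteq \pr_M(\P_x)$; and finally carry out the main work of lifting $\U(w)$ into $\RR(x)$.

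Writing $x = wm'$ with $m' \in \M(w) \subseteq \M \subseteq \P$, for any $p \in \P_x$ the stabilizer equation $p x \bar{p}^{-1} = x$ rearranges to $p = x \bar{p} x^{-1}$. Since $\P$ is defined over $F$, $\bar{p} \in \P$, so $p \in x\P x^{-1} = wm' \P m'^{-1} w^{-1} = w\P w^{-1}$. Thus $\P_x \subseteq \P \cap w\P w^{-1}$, and applying $\pr_M$ gives $\pr_M(\P_x) \subseteq \M \cap w\P w^{-1} = \P(w)$. Because $\U(w)$ is the unipotent radical of the parabolic subgroup $\P(w)$ of $\M$, it is normal in $\P(w)$, hence also normalized by the smaller group $\pr_M(\P_x)$. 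Therefore, once $\U(w) \subseteq \pr_M(\P_x)$ is established, normality follows automatically.

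The technical heart is the inclusion $\U(w) \subseteq \pr_M(\RR(x))$. Given $u \in \U(w)$, my plan is to produce $v \in \U$ with $\tilde{u} := uv$ lying in $\RR(x)$ and $\pr_M(\tilde{u}) = u$. Passing to Lie algebras, writing $\tilde{u} = \exp(Y+Z)$ with $Y \in \Lie(\U(w)) \subseteq \mathfrak{m}$ and $Z \in \mathfrak{u}$, the stabilizer equation becomes $\operatorname{Ad}(x^{-1})(Y+Z) = \bar{Y} + \bar{Z}$. The crucial input is that $w \in W[2]$, so $w = w^{-1}$: the root spaces comprising $\Lie(\U(w))$ are those $\mathfrak{g}_\beta$ with $\beta \in \Phi_M$ and $w\beta \in \Phi(\U)$, while for $Z \in \mathfrak{g}_\alpha$ the $\mathfrak{m}$-component of $\operatorname{Ad}(x^{-1})Z = \operatorname{Ad}(m'^{-1})\operatorname{Ad}(w)Z$ is nonzero precisely when $\alpha \in \Phi(\U)$ and $w\alpha \in \Phi_M$. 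The involution property makes the correspondence $\beta \leftrightarrow w\beta$ a bijection between these two root sets, so the $\mathfrak{m}$-component condition can be solved root-by-root, modulo the invertible $\operatorname{Ad}(m'^{-1})$ action on $\mathfrak{m}$. The $\mathfrak{u}^-$ and $\mathfrak{u}$ components of the equation can then be resolved iteratively along the height filtration in $\U$, integrating infinitesimal solutions to a genuine group lift $\tilde{u} \in \P_x$.

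Finally, I check $\tilde{u} \in \RR(x)$ by noting that $\tilde{u} = uv$ is unipotent in $\P$ (a product of the unipotent $u \in \M$ and $v \in \U$, with $\U \triangleleft \P$), and that as $u$ varies over $\U(w)$ these lifts generate a connected unipotent subgroup of $\P_x$ whose image in $\M$ is all of $\U(w)$, normalized by $\P_x$ via the normalization established in the first stage and uniqueness of lifts modulo $\U \cap \P_x \subseteq \RR(x)$. As a connected unipotent normal subgroup of $\P_x$, it must lie inside $\RR(x)$. The main obstacle will be Step 3: carrying out the root-space matching that relies on the involution hypothesis $w^2 = 1$ to pair the relevant roots, then controlling the height-filtration iteration that integrates the Lie-algebra solution to a group element, and finally verifying that the resulting lifts assemble into a genuinely normal connected unipotent subgroup of $\P_x$ rather than an arbitrary unipotent subgroup.
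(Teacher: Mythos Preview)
Your first stage matches the paper exactly: $\P_x \subseteq \P \cap w\P w^{-1}$ gives $\pr_M(\P_x) \subseteq \P(w)$, and normality of $\U(w)$ in $\P(w)$ reduces the first claim to the containment $\U(w) \subseteq \pr_M(\P_x)$, which in turn follows from the second claim.

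For the main inclusion $\U(w)\subseteq\pr_M(\RR(x))$, however, the paper takes a quite different and more direct route. It first records a Levi-type decomposition $\P\cap w\P w^{-1}=\M(w)\ltimes\Z$ with $\Z=\U(w)(\U\cap w\P w^{-1})$, and shows $\P_x=\M(w)_x\ltimes\Z_x$; this identifies $\RR(x)=\Z_x$ at once and makes your Step~3 superfluous. Then, given $u\in\U(w)$, one sets $v:=x\bar u\bar x\in\U\cap w\P w^{-1}$. The commutator $z:=[v^{-1},u^{-1}]$ lies in $\U':=\U\cap w\U w^{-1}$ and satisfies the cocycle condition $z\cdot(x\bar z\bar x)=e$ for the involution $g\mapsto x\bar g\bar x$ on $\U'$. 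Since $H^1$ of a unipotent group under an involution is trivial, there is $u'\in\U'$ with $z=u'(x\bar{u'}^{-1}\bar x)$, and a direct check gives $uvu'\in\Z_x=\RR(x)$ with $\pr_M(uvu')=u$.

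Your Lie-algebra approach is plausible but the sketch conceals exactly the point where the work lies. After the root-by-root matching of the $\mathfrak m$-component, the remaining $\mathfrak u$- and $\mathfrak u^-$-components of the conjugate-linear equation for $Z$ are still coupled; the ``height-filtration iteration'' you allude to is essentially a re-proof of the vanishing $H^1(\langle\theta'\rangle,\U')=1$ rather than an invocation of it. The paper's explicit cocycle construction bypasses Lie algebras entirely and turns the lifting into a single application of that standard cohomological fact. Your Step~3 argument (a connected unipotent normal subgroup lies in the unipotent radical) is valid, but becomes unnecessary once one has the decomposition $\P_x=\M(w)_x\ltimes\Z_x$ and knows the lift already lands in $\Z_x$.
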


\begin{proof}
    From the proof of Lemma \ref{Lem 3.6}, $w^2=1$. Since $x\in wM(w)$, $x\P x^{-1}=w\P w^{-1}$ and thus $\P_x\subseteq\P\cap x\P x^{-1}=\P\cap w\P w^{-1}$. Therefore, $\pr_M(\P_x)\subseteq\P(w)$. By the Levi decomposition, we know $\U(w)$ is a normal subgroup of $\P(w)$ and thus it is a normal subgroup of $\pr_M(\P_x)$.

    Let $\Z=\U(w)(\U\cap w\P w^{-1})$ then by the Levi decomposition, $\P\cap x\P x^{-1}=\M(w)\ltimes\Z$. We claim that $\P_x=\M(w)_x\ltimes \Z_x$. It is enough to show $\P_x\subseteq\M(w)_x\ltimes \Z_x$. If $p\in \P_x$, then $p=mn$, $m\in \M(w)$, $n\in \Z$. But since $p=x\Bar{p}x^{-1}=x\Bar{m}x^{-1}x\Bar{n}x^{-1}=mn$, by Levi decomposition, $m=x\Bar{m}x^{-1}$ and $n=x\Bar{n}x^{-1}$. It follows that $m\in \M(w)_x$ and $n\in \Z_x$. So, $\R(x)=\Z_x$.

    We now show $\U(w)\in \pr_M(\R(x))$. Let $u\in \U(w)$, and let $v=x\bar{u}\bar{x}$. Recalled that $x\in \X$ implies $x^{-1}=\bar{x}$. We have $v\in\U\cap w\P w^{-1}\subseteq \Z$. Consider the commutator $z=[v^{-1},u^{-1}]$. Since $u\in \M$ and $v\in\U$, we have $u^{-1}vu\in \U$, ($\P=\M\ltimes\U$, semidirect product and $\M$ act on $\U$ by conjugation.) and thus $z=v^{-1}u^{-1}vu\in\U$. Furthermore, since $z=[v^{-1},u^{-1}]=x\bar{u}^{-1}\bar{x}u^{-1}x\bar{u}\bar{x}u$, and $z^{-1}=u^{-1}x\bar{u}^{-1}x^{-1}ux\bar{u}\bar{x}$, we have $z^{-1}=x\bar{z}\bar{x}$. So, $z\in\U':=\U\cap w\U w^{-1}$. Consider the involution on $\U'$: $\theta'(g)=xg\bar{x}$. Since $\U'$ is a unipotent group, we have trivial first cohomology group $H^{1}(\langle\theta'\rangle,\U')=1$\cite[Lemma 0.6]{HW}. Since $z$ satisfies the cocycle condition $z\theta'(z)=1$, $z$ should also be a coboundary, that is, there exist $u'\in\U'$ such that $z=u'\theta'(u')^{-1}=u'x\bar{u'}^{-1}\bar{x}=[v^{-1},u^{-1}]$. By expanding the defining equation for $v$, we have $uvu'=x\overline{uvu'}x^{-1}$, and this implies $uvu'\in \Z_x$. But since $vu'\in \U$, we have $\pr_M(uvu')=u$ and the claim follows.
\end{proof}

Let $x\in X$. Recall that $\pr_M(\P\cap x\P x^{-1})$ is a parabolic subgroup of $\M$. Define $\U(x)$ be the unipotent radical of $\pr_M(\P\cap x\P x^{-1})$ and $\RR(x)$ be the unipotent radical of $\P_x$.

\begin{lem}\cite[Lemma 3.8]{LO}\label{lem3.8}
    Let $x\in X$.
    \begin{enumerate}
        \item The kernel of $\pr_M:\P_x\to \M$ is contained in $\RR(x)$.
        \item $\U(x)$ is a normal subgroup of $\pr_M(\P_x)$ and $\U(x)\subseteq \pr_M(\RR_x)$.
        \item Let $\chi$ be a character of $\P_x(\A)^1\backslash\P_x(\A)$. Then for every function $f:\U(\A)M\backslash\P(\A)\to\C$ satisfying 
        \begin{align*}
            \int_{U(x)\backslash\U(x)(\A)}f(up)du=0, \text{ for all }p\in\P(\A),
        \end{align*}
        we have
        $$\int_{P_x\backslash\P_x(\A)}F(p)\chi(p)dp=0.$$
    \end{enumerate}
\end{lem}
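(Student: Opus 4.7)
The plan is to prove the three parts in order, with parts (2) and (3) reducing to Lemma \ref{lem 3.7} by a conjugation-and-unfolding argument.

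For part (1), the kernel of $\pr_M|_{\P_x}$ equals $\P_x \cap \U$, which is normal in $\P_x$ (as the kernel of a homomorphism) and unipotent (as a closed subgroup of $\U$). In characteristic zero any closed subgroup of a connected unipotent group is connected, so $\P_x \cap \U$ is a connected normal unipotent subgroup of $\P_x$, hence contained in its unipotent radical $\RR(x)$.

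For part (2), I would reduce to Lemma \ref{lem 3.7} by a $P$-conjugation argument. Using Lemma \ref{Lem 3.6}, choose $p \in P$ so that $x' := p \cdot x = p x \bar{p}^{-1} \in wM(w) \cap X$ with $w = \imath_P(x)$. Since $\P$ is $F$-defined we have $\bar{p} \in \P$, and self-normalization of $\P$ gives $\bar{p}^{-1}\P\bar{p} = \P$. A direct calculation then yields
\[
\P \cap x'\P(x')^{-1} = p\bigl(\P \cap x\P x^{-1}\bigr)p^{-1}, \quad \P_{x'} = p\P_x p^{-1}, \quad \RR(x') = p\RR(x)p^{-1}.
\]
Setting $m_p := \pr_M(p)$, this translates into $\U(x) = m_p^{-1}\U(x')m_p$, $\pr_M(\P_x) = m_p^{-1}\pr_M(\P_{x'})m_p$, and $\pr_M(\RR(x)) = m_p^{-1}\pr_M(\RR(x'))m_p$. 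Since $x' \in wM(w)$ forces $\U(x') = \U(w)$, Lemma \ref{lem 3.7} applied to $x'$ supplies $\U(w) \trianglelefteq \pr_M(\P_{x'})$ and $\U(w) \subseteq \pr_M(\RR(x'))$, and $m_p^{-1}$-conjugation of both statements establishes part (2).

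For part (3), the plan is to unfold the integral in stages until the assumed cuspidality integral is exposed. Since $\RR(x)$ is unipotent, rational characters of $\P_x$ restrict trivially to it, so $\RR(x)(\A) \subseteq \P_x(\A)^1$ and $\chi|_{\RR(x)(\A)} \equiv 1$. Unfolding through $\RR(x)(\A) \trianglelefteq \P_x(\A)$ yields
\[
\int_{P_x\mo \P_x(\A)} f(p)\chi(p)\,dp = \int_{P_x\RR(x)(\A)\mo \P_x(\A)} \chi(p) \int_{R(x)\mo \RR(x)(\A)} f(rp)\,dr\,dp,
\]
and it suffices to show the inner integral vanishes. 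Set $\mathbf{K} := \ker(\pr_M|_{\RR(x)}) = \RR(x) \cap \U$ and $\tilde{\U}(x) := (\pr_M|_{\RR(x)})^{-1}(\U(x))$; by part (2), $\U(x) \trianglelefteq \pr_M(\RR(x))$, so $\tilde{\U}(x) \trianglelefteq \RR(x)$ with quotient $\tilde{\U}(x)/\mathbf{K} \cong \U(x)$. Unfolding the inner integral first along $\tilde{\U}(x) \trianglelefteq \RR(x)$ and then along $\mathbf{K} \trianglelefteq \tilde{\U}(x)$ rewrites it as
\[
\int_{R(x)\tilde{\U}(x)(\A)\mo \RR(x)(\A)}\int_{K\mo \mathbf{K}(\A)}\int_{U(x)\mo \U(x)(\A)} f(u k s p)\,du\,dk\,ds.
\]
Because $k \in \U(\A)$ and $u \in \M(\A)$ normalizes $\U$, writing $uk = (uku^{-1})u$ and using left $\U(\A)$-invariance of $f$ gives $f(u k s p) = f(u s p)$; the innermost $u$-integral then vanishes by the cuspidality hypothesis applied at $p' = sp \in \P(\A)$.

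The main obstacle I anticipate is the measure bookkeeping in part (3): normalizing Haar measures coherently on the exact sequences $1 \to \mathbf{K} \to \tilde{\U}(x) \to \U(x) \to 1$ and $1 \to \tilde{\U}(x) \to \RR(x) \to \RR(x)/\tilde{\U}(x) \to 1$ of unipotent groups so the two unfoldings are literally valid, and verifying that Fubini applies to the continuous function $f$ at each layer. The algebraic reduction to the cuspidality hypothesis is clean, but the analytic justification of each step requires care.
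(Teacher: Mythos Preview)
Your proposal is correct and follows essentially the same approach as the paper. Parts (1) and (2) match the paper's proof almost exactly: the kernel-is-unipotent-normal argument for (1), and the conjugation via Lemma~\ref{Lem 3.6} to a representative in $wM(w)$ followed by an appeal to Lemma~\ref{lem 3.7} for (2). For part (3) the paper simply cites \cite[Lemma 3.8]{LO} without detail, whereas you supply a full unfolding argument; your reduction to the cuspidality hypothesis is sound, though the cleanest way to execute the final step is to note directly that $f(tsp)=f(\pr_M(t)sp)$ for $t\in\tilde{\U}(x)(\A)$ (via left $\U(\A)$-invariance) and then push the integral forward along $\pr_M$, rather than trying to parametrize $\tilde{\U}(x)(\A)$ as products $uk$ with $u\in\U(x)(\A)\subseteq\M(\A)$, which need not lie in $\RR(x)(\A)$.
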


\begin{proof}
    For part 1, we consider the map $\pr_M:\P_x\to\M$. The kernel of this map is $\P_x\cap \U\subseteq \U$. Thus, the kernel of this map is a unipotent normal subgroup of $\P_x$, and thus it is contained in $\RR_x$.

    Let $w\in \MWM$ be such that $PxP=PwP$. By \ref{Lem 3.6}, there exists $y\in [x]_P\cap M(w)w$ and let $p\in P$ be such that $x=py\bar{p}^{-1}$. We have
    \begin{align*}
        \pr_M(\P_x)&=\pr_M(p)\pr_M(\P_y)\overline{\pr_M(p)^{-1}} &\text{and}\\
        \RR(x)&=p\RR(y)\bar{p}^{-1} &
    \end{align*}

    Moreover, $\P\cap x\P \overline{x}^{-1}=p(\P\cap y\P \bar{y}^{-1})\bar{p}^{-1}$ implies $\pr_M(\P\cap x\P \overline{x}^{-1})=\pr_M(p)\pr_M(\P\cap y\P \bar{y}^{-1})\pr_M(\bar{p}^{-1})$, and thus part (2) follows from \ref{lem 3.7}.

    Part (3) will follow the same argument in \cite[Lemma 3.8]{LO}\label{lem3.8}.
\end{proof}

\begin{lem}\label{lem3.9} 
    Let $x\in N_G(M)\cap X$. We have $\P_x=\M_x\ltimes \U_x$.
\end{lem}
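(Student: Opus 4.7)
The plan is to show that every $p \in \P_x$ admits a unique Levi decomposition $p = mu$ with $m \in \M_x$ and $u \in \U_x$. The reverse containment $\M_x \cdot \U_x \subseteq \P_x$ is immediate, and together with $\M_x \cap \U_x \subseteq \M \cap \U = \{e\}$ and the normalization property (verified at the end), this yields the semidirect product structure.

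First I would use $x \in N_G(\M)$ to observe that conjugation by $x$ preserves $\M$, so $x\P x^{-1}$ is again a parabolic with Levi $\M$ and unipotent radical $x\U x^{-1}$. The fixed-point identity $p = x\bar{p}x^{-1}$ (which is equivalent to $p \cdot x = x$ via $\bar{x} = x^{-1}$) places $\P_x$ inside $\P \cap x\P x^{-1}$. For $p = mu \in \P_x$ with $m \in \M,\ u \in \U$, expanding $p = x\bar{p}x^{-1}$ yields
\[
mu = (x\bar{m}x^{-1})(x\bar{u}x^{-1}) =: m'u', \qquad m' \in \M,\ u' \in x\U x^{-1},
\]
so that $m^{-1}m' = u(u')^{-1} \in \M \cap (\U \cdot x\U x^{-1})$.

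The technical heart of the proof is the identity $\M \cap (\U \cdot x\U x^{-1}) = \{e\}$. To establish it I would exploit that $x\U x^{-1}$ is a $\T_M$-stable unipotent group generated by $\T_M$-root subgroups, each of which lies in either $\U$ or in the opposite unipotent radical $\U^-$; ordering the roots yields the factorization $x\U x^{-1} = (x\U x^{-1} \cap \U)(x\U x^{-1} \cap \U^-)$. Thus $\U \cdot x\U x^{-1} \subseteq \U \cdot \U^-$, and the claim reduces to $\M \cap \U \U^- = \{e\}$. The latter follows from the standard fact $\P \cap \U^- = \{e\}$: if $m = uv$ with $u \in \U,\ v \in \U^-$, then $u^{-1}m = v \in \P \cap \U^- = \{e\}$, forcing $m = u = v = e$.

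With the identity in hand, $m = m'$ and $u = u'$, so $m \in \M_x$ and $u \in \U_x$, proving $\P_x \subseteq \M_x \cdot \U_x$. For the normalization, given $m \in \M_x$ and $u \in \U_x$, the element $mum^{-1} \in \U$ (since $\M$ normalizes $\U$), and a short manipulation using $mx = x\bar{m}$ and $ux = x\bar{u}$ shows $(mum^{-1}) \cdot x = m(ux)\bar{m}^{-1} m\bar{m}^{-1} \cdots = x$, so $mum^{-1} \in \U_x$. The main obstacle, I expect, is the identity $\M \cap (\U \cdot x\U x^{-1}) = \{e\}$; this is precisely where the hypothesis $x \in N_G(\M)$ is used, as it forces $x\U x^{-1}$ to be a parabolic unipotent radical sharing the Levi $\M$, allowing the clean factorization with respect to $\U$ and $\U^-$. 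Without this hypothesis no such factorization is available and the argument breaks down.
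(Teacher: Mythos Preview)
Your argument is correct, and the underlying idea---show that the Levi components of a fixed point are themselves fixed---matches the paper's. The execution differs, however. You write $p=mu$ with $u\in\U$ and then prove the auxiliary identity $\M\cap(\U\cdot x\U x^{-1})=\{e\}$ via the root-group factorization $x\U x^{-1}=(x\U x^{-1}\cap\U)(x\U x^{-1}\cap\U^-)$ and the big-cell fact $\M\cap\U\U^-=\{e\}$. The paper instead first observes that $\P_x\subseteq\P\cap x\P x^{-1}$ and that the latter already has the Levi decomposition $\M\ltimes(\U\cap x\U x^{-1})$ (the paper writes $x\U x^{-1}$, an apparent typo for $\U\cap x\U x^{-1}$), with both factors stable under the involution $\theta_x\colon g\mapsto x\bar g x^{-1}$. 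Taking $\theta_x$-fixed points of a semidirect product whose factors are $\theta_x$-stable immediately gives $(\P\cap x\P x^{-1})_x=\M_x\ltimes(\U\cap x\U x^{-1})_x$, and one finishes with $\P_x=(\P\cap x\P x^{-1})_x$ and $\U_x=(\U\cap x\U x^{-1})_x$. This bypasses the factorization with respect to $\U$ and $\U^-$ entirely; your detour through $\U\U^-$ works but is unnecessary once you restrict to $\P\cap x\P x^{-1}$ first.
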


\begin{proof}
    Consider the Levi decomposition $\P\cap x\P x^{-1}=\M\ltimes x\U x^{-1}$, which is invariant under conjugation by $x$. That is, $(\P\cap x\P x^{-1})_x=\M_x\ltimes (\U\cap x\U x^{-1})_x$. Further, $\P_x=(\P\cap x\P x^{-1})_x$ and $\U_x=(\U \cap x\U x^{-1})_x$. Then the lemma follows.
\end{proof}

We have the following lemma from \cite[Lemma 3.10]{LO}.

\begin{lem}\label{lem3.10}
    There exist a bijection between $M$-admissible $P$-orbits in $X$ and $M$-orbits in $N_G(M)\cap X$ by the map $[x]_P\mapsto [x]_P\cap N_G(M)$. 
\end{lem}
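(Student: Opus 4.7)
The plan is to verify that $[x]_P \mapsto [x]_P \cap N_G(M)$ is a well-defined bijection, by checking four properties in turn: non-emptiness of the intersection, the single-orbit property, surjectivity, and injectivity. The argument follows the pattern of \cite[Lemma 3.10]{LO}.

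Non-emptiness is immediate from Lemma \ref{Lem 3.6}: for $M$-admissible $x$, set $w = \imath_P([x]_P)$; then $M(w) = M$ gives $w \in N_W(M)$ and $wM(w) = wM \subseteq N_G(M)$, so $\emptyset \neq wM(w) \cap [x]_P \subseteq N_G(M) \cap [x]_P$. The main step is to show this intersection is a single $M$-orbit. Fix $y \in [x]_P \cap N_G(M)$ and take any $z$ in the same intersection. Writing $z = p \cdot y$ with $p = mu \in MU$, the element $u \cdot y = m^{-1} \cdot z$ also lies in $N_G(M)$, since $m^{-1} \in M \subseteq N_G(M)$. The crux is the claim that $u \in U$ with $u \cdot y \in N_G(M)$ forces $u \in U_y$; granting this and using Lemma \ref{lem3.9} to write $P_y = M_y \ltimes U_y$, one gets $p \in M U_y \subseteq M P_y$, so $z \in M \cdot y$.

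To prove the claim, compute
\[
(u y \bar{u}^{-1}) y^{-1} = u \bigl( y \bar{u}^{-1} y^{-1} \bigr) = u v,
\]
where $v = y \bar{u}^{-1} y^{-1}$ lies in $y U y^{-1} =: U'$. Since $y \in N_G(M)$, the parabolic $y P y^{-1}$ has Levi $M$, and $U'$ is its unipotent radical. Factor $U' = U_A \cdot U_B$ with $U_A = U' \cap U$ and $U_B = U' \cap U^-$, according to the partition of the Weyl-translated root set $w\Sigma_P$ (where $w$ is the image of $y$ in $W(M,M)$) into positive and negative parts. Then $u v = (u v_A) v_B$ lies in $U \cdot U^-$. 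The assumption $uv \in N_G(M)$ together with the classical triviality $N_G(M) \cap U U^- = \{e\}$ (provable by a Lie-algebra computation on $\mathfrak{t}_M$ exploiting the nilpotency of $\mathrm{ad}(\mathfrak{u}^\pm)$, or equivalently from the uniqueness of the big-cell decomposition $G \supseteq U^- M U$) forces $uv = e$, so $u \cdot y = y$ as required.

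Surjectivity: given $y \in N_G(M) \cap X$, write $y = m \tilde{w}$ with $m \in M$ and $\tilde{w} \in N_G(T) \cap N_G(M)$ lifting some $w_0 \in W(M,M)$; Bruhat uniqueness then gives $\imath_P([y]_P) = w_0 \in W(M,M)$, so $M(w_0) = M$ and $[y]_P$ is $M$-admissible. Injectivity is immediate, since distinct $P$-orbits are disjoint as subsets of $X$, hence their intersections with $N_G(M)$ are disjoint. The main obstacle is the rigorous justification of $N_G(M) \cap U U^- = \{e\}$ together with the root-theoretic factorization $U' = U_A \cdot U_B$; both rely on careful bookkeeping in the root system using the Weyl-group image of $y$, and the latter requires checking that the decomposition survives the commutator relations among the relevant root subgroups.
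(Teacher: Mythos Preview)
Your proof is correct and supplies the details that the paper omits: the paper itself gives no argument for this lemma, simply citing \cite[Lemma 3.10]{LO}, and your reconstruction follows exactly that source. The two technical points you flag as needing care are both valid and standard. The factorization $U' = (U'\cap U)(U'\cap U^-)$ holds because $U'=yUy^{-1}$ is the unipotent radical of a semistandard parabolic with Levi $M$ (conjugation by $m\in M$ preserves $n_wUn_w^{-1}$, so $U'$ depends only on the image $w$ of $y$ in $W(M,M)$), and its root spaces are indexed by $w\Sigma_P$, which splits into positive and negative parts relative to $P$. The identity $N_G(M)\cap UU^- = \{e\}$ follows cleanly from big-cell uniqueness: if $g=u_+u_-\in N_G(M)$ then for every $t\in T_M$ one has $tgt^{-1}=(tu_+t^{-1})(tu_-t^{-1})$ on the one hand and $g\cdot(g^{-1}tgt^{-1})=u_+\cdot s\cdot(s^{-1}u_-s)$ with $s\in T_M$ on the other; comparing the two $U\times M\times U^-$ factorizations forces $s=e$ and $u_\pm\in C_G(T_M)=M$, hence $u_\pm=e$.
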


\begin{rmk}
    There is a bijection between $\imath_M: N_G(M)/M\to W(M,M)\hookrightarrow W$. Without lost of generality, we will use the same symbol when we precompose this map with the quotient map, that is $\imath_M: N_G(M)\to N_G(M)/M\to W(M,M)$.
\end{rmk}
    
\subsection{Minimal Involutions}\label{secmin.inv}



    


Let $M=\{L_{(n_{1},\cdots,n_{k};m)}\}$ be a standard Levi subgroup of $G$.  
Consider $$[M]=\{L_{(n_{\sigma(1)},\cdots,n_{\sigma(k)};m)}:\sigma\in S_k \}.$$
This is the set of standard Levi subgroups of $G$ which are conjugate to $M$. For $M'\in [M]$, let $W(M,M')=\{w\in W: wMw^{-1}=M'\text{ and }w \text{ is of minimal length in }wW^M\}$. Let $W(M)=\bigcup_{M'\in [M]} W(M,M')$. Notice that if $w\in W(M)$ and $w'\in W(wMw^{-1})$, then $w'w\in W(M)$.

There is a natural set bijection between $W(M)$ and the signed permutation group $\frakW_k$. We first identify the set $\Delta_M$ of simple roots with respect to $M$ to the set $\Delta_k$ of simple roots associate to $\frakW_k$. This identifies the set of elementary symmetries in $W(M)$ (defined in \cite[\S I.1.7]{MW}) and the set of simple reflections in $\frakW_k$. We can define the map $\jmath_M: W(M)\to \frakW_k$ inductively by 
\begin{align*}
    \jmath_M(w'w)&=\jmath_{wMw^{-1}}(w')\jmath_M(w),&
    \begin{split}
        w\in W(M),\;w'\in W(wMw^{-1}).
    \end{split}
\end{align*}
This map is well defined and injective since in $W(M)$ (\textit{resp.} $\frakW_k$), $w$ is uniquely identified by $\{\alpha\in R(T_M,G):\alpha>0,w\alpha<0\}$ (\textit{resp.} $\{\alpha\in R(T_0,U_{2k}):\alpha>0,w\alpha<0\}$). 

\begin{defn}
    An element $w\in W(M,M)$ is called $M$-minimal if $w=w^L_M$ for some standard Levi subgroup $L\supseteq M$ and $w\alpha=-\alpha$ for $\alpha\in \Delta^L_M$.
\end{defn}

A remark is that if $w\in W(M,M)$ is $M$-minimal, then $\jmath_M(w)$ is minimal in $\frakW_k$, the Weyl group of $U_{2k}$. 

Let $w\in W(M,M)$ be $M$-minimal, and let $\jmath_M(w)=\tau\frakc_{l,k}$. We define the following two subsets of $\{1,\cdots,l\}$:
\begin{align*}
    &S=\frakc_-(\jmath_M(w)), &R=\frakc_<(\jmath_M(w)).
\end{align*}

\begin{lem}\label{main lemma}
    With the above notation, $wM \cap X$ is not empty if and only if $n_i$ is even for all $\ell + 1 \leq i \leq k$ and in this case $wM \cap X$ is a unique $M$-orbit.
\end{lem}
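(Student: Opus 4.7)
The plan is to analyze $wM \cap X$ block-by-block, decomposing $M \cong \prod_{i=1}^{k} \ResGL{n_i} \times \Un{2m}$ (so an element of $M$ is a tuple $(g_1,\ldots,g_k;h)$) and reading off the action of $w$ from the presentation $\jmath_M(w) = \tau\frakc_{\ell,k}$. Choose a Galois-invariant representative $w \in N_G(M)$ so that $\bar w = w$ and $w^2 \in T$; then $wm \in X$ is equivalent to the single equation $\phi_w(m)\,\bar m = w^{-2}$, where $\phi_w(m) := w^{-1} m w$. Its action on the $i$-th $\GL_{n_i}$-block is: trivial for $i \in S$; the swap of the $i$-th and $\tau(i)$-th factors for $i \in R$ (forcing $n_i = n_{\tau(i)}$); and the Siegel-type outer involution $g \mapsto g^*$ (up to inner conjugation) on the block for $i \in \frakc_{\ell,k}$. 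The action on the $\Un{2m}$ factor is trivial.

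Accordingly the equation decouples into four independent local equations, three of which impose no obstruction and yield a single orbit each. For $i \in S$ one gets $g_i\bar g_i = e$, a single $\GL_{n_i}(E)$-orbit by Hilbert 90 (Example \ref{two examples of single orbit}(2)). For each pair $(i,\tau(i))$ with $i \in R$ one gets $g_{\tau(i)} = \bar g_i^{-1}$, so $g_i$ is free in $\GL_{n_i}(E)$ and determines $g_{\tau(i)}$, giving a single diagonal orbit. For $h$ in the $\Un{2m}$ block one gets $h\bar h = e$, a single $\Un{2m}$-orbit by Lemma \ref{X(U_2n,theta) is single G orbit.}. All the content of the lemma is therefore concentrated in the factors indexed by $\frakc_{\ell,k}$.

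For each such $i$, a natural choice of representative for the long Weyl element of $\Un{2n_i}$ (for instance $w = J_{n_i}$, with $w^2 = -I$) produces a sign $-1$ in the local equation, so it becomes $g_i^* = -\bar g_i^{-1}$. Expanding $g^* = w_{n_i}\,{}^t g^{-1} w_{n_i}$ and rearranging, this is equivalent to $g_i$ being an invertible skew-symmetric matrix in $\GL_{n_i}(E)$. This is the origin of the parity obstruction: such matrices exist only in even dimensions, so $wM \cap X$ is empty unless $n_i$ is even for every $i \in \frakc_{\ell,k}$. Conversely, when all such $n_i$ are even, the standard symplectic form gives an explicit $g_i$, witnessing non-emptiness; moreover, the induced $\GL_{n_i}(E)$-action becomes the classical congruence action $g_i \mapsto {}^t P\, g_i\, P$ with $P \in \GL_{n_i}(E)$, which is transitive on invertible skew-symmetric matrices (non-degenerate symplectic forms of given rank over a field are all equivalent), so this block also contributes a single orbit.

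Combining the four analyses, $wM \cap X$ is non-empty iff $n_i$ is even for every $\ell + 1 \leq i \leq k$, and in this case it is a product of single orbits, hence itself a single $M$-orbit. The main technical obstacle lies in the third paragraph: carefully tracking the Siegel involution, the Galois conjugation, the $F$-structure of the embedding $\ResGL{n_i}\hookrightarrow \Un{2n_i}$, and the sign arising from $w^{-2}$, so that the local equation on $g_i$ genuinely lands on skew-symmetry rather than on a symmetric or Hermitian condition (which would give opposite parity or multiple orbits).
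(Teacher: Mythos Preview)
Your proposal is correct and follows essentially the same route as the paper: both choose a Galois-fixed representative of $w$, decouple the condition $x\bar x=e$ block-by-block into the four cases $i\in S$, $i\in R$, $i\in\frakc_{\ell,k}$, and the $\Un{2m}$-factor, and then invoke Hilbert~90, the obvious transitivity, the skew-symmetric (alternating) condition, and Lemma~\ref{X(U_2n,theta) is single G orbit.} respectively. One small correction: after unwinding the Siegel involution, it is $g_i w_{n_i}$ (not $g_i$ itself) that is alternating, and the congruence action is seen on that matrix after the change of variable---this is exactly the care you flag in your last paragraph, and it matches the paper's phrasing ``$g_i w_{n_i}$ is an alternating matrix''; the paper packages the transitivity step as Example~\ref{two examples of single orbit}(1) rather than citing the classification of nondegenerate alternating forms directly, but these are the same fact.
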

\begin{proof}
    Assume that $x \in wM \cap X$. We can choose
\[
t_w = \iota(t_1,\ldots,t_{r+s};t) \in w
\]
where
\[
t_j = \begin{cases}
    I_{n_{i(j)}} & i(j) \in S\\
    \begin{pmatrix}
        0 & I_{n_{i(j)}}\\
        I_{n_{i(j)}} & 0
    \end{pmatrix} & i(j) \in R
\end{cases},
i(1) = 1,
i(j+1) = \begin{cases}
    i(j) + 1 & i(j) \in S\\
    i(j) + 2 & i(j) \in R
\end{cases}
\]
for $j = 1,\ldots,r+s$ (note that $i(j) \in R \cup S$ for all $j$) and
\[
t = 
\begin{pmatrix}
    &&&&&&I_{n_\ell + 1}\\
    &&&&&\cdot&\\   
    &&&&I_{n_k}&&\\
    &&&I_{n_\ell + 1}&&&\\
    &&-I_{n_k}&&&&\\
    &\cdot&&&&&\\
    -I_{n_\ell + 1}&&&&&&
\end{pmatrix}.
\]

Then, by definition, $x \in t_w M \cap X$. Let $ x = t_w \iota (g_1,\ldots,g_k;h)$ with $g_i \in \GL_{n_i}(E)$ and $ h \in G_m$. The condition $x \in X$ is equivalent to
\[
\begin{cases}
    g_i \bar{g}_i = I_{n_i} & i \in S\\
    g_i \bar{g}_{i+1} = I_{n_i} & i \in R\\
    g_i \bar{g}_i^* = - I_{n_i} & \ell + 1 \leq i \leq k\\
    h \bar{h} = I_{2m}.
\end{cases}
\]
In particular, for $\ell + 1 \leq i \leq k$ the condition is that $g_i w_{n_i}$ is an alternating matrix and therefore $n_i$ is even. For $d_i \in \GL_{n_i}(E), i = 1,\ldots,k$ and $d^\prime \in G_m$ let $d = \iota(d_1,\ldots,d_k;d^\prime) \in M$. Then
\[
dx\bar{d}^{-1} = t_w \iota(g_1^\prime,\ldots,g_k^\prime;h^\prime)
\]
where
\[
\begin{cases}
    g_i^\prime = d_i g_i \bar{d}_i^{-1} & i \in S\\
    g_i^\prime = d_{i+1} g_i \bar{d}_i^{-1}, g_{i+1}^\prime = d_i g_{i+1} \bar{d}_{i+1}^{-1}={\bar{g^\prime}}^{-1} & i \in R\\
    g_i^\prime = d_i^* g_i \bar{d}_i^{-1} & \ell + 1 \leq i \leq k\\
    h' = d' h \bar{d'}^{-1}.
\end{cases}
\]
It follows from {Example \ref{two examples of single orbit}} and Lemma \ref{X(U_2n,theta) is single G orbit.} that $t_w M \cap X$ is a unique $M$-orbit.

On the other hand, assuming that $n_i$ is even whenever $\ell + 1 \leq i \leq k$, with the above choice of $t_w$ we have
\begin{align}\label{good xw}
    x_w = t_w \iota(I_{n_1+\ldots+n_\ell}, \epsilon_{\ell+1}, \ldots, \epsilon_{n_k};I_{2m}) \in wM\cap X
\end{align}

where
$\epsilon_{2n} = \begin{pmatrix} -I_n & 0 \\ 0 & I_n \end{pmatrix}$.
\end{proof}

Let $\M$ be a Levi subgroup of $\G$ and $x\in N_G(M)\cap X$. The group $\imath_M(N_G(M)/M)=W(M,M)$ acts on $\fraka_M^*$. In particular, since $x$ and $\bar{x}$ correspond to the same element in $N_G(M)/M$ and also $x\bar x=1$, $x$ acts as an involution on $\fraka_M^*$ and decomposes into a direct sum of eigenspaces with eigenvalues $1$ and $-1$. We will denote $(\fraka_M^*)^+_x$ for the eigenspace for $+1$ and $(\fraka_M^*)^-_x$ for $-1$. Denote $\L=\L(x)$ be the intersection of all \textit{semistandard} Levi subgroups containing $M$ and $x$. We have $(\fraka_M^*)^+_x=\fraka_L^*$ and $(\fraka_M^*)^-_x=(\fraka_M^L)^*$.

\begin{rmk}\cite[Remark 3.17]{LO}
    If $w=\imath_M(x)$, then $L(x)$ and $\fraka_M^*=(\fraka_M^*)^+_x\oplus(\fraka_M^*)^-_x$ depend only in $w$. Furthermore, $L(x)$ is a standard Levi subgroup if and only if $w$ is an $M$-minimal involution and $w=w_M^{L(x)}$.
\end{rmk}

\begin{lem}\label{lem3.19}
For every $x \in N_G(M) \cap X$ the restriction of $H_M$ to $\M_x(\A)$ defines a surjective homomorphism
\[
H_M:\M_x(\A) \rightarrow (\fraka_M)_x^+
\]
Moreover, the restriction of $H_M$ to $A_M^{M_x}$ defines an isomorphism
\[
H_M:A_M^{M_x} \rightarrow (\fraka_M)_x^+.
\]
\end{lem}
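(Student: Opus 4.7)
The plan is to reduce the first assertion to the second and prove the second directly, using the well-known fact that $H_M\colon A_M \to \fraka_M$ is an isomorphism. First I would verify the containment $H_M(\M_x(\A)) \subseteq (\fraka_M)_x^+$; then I would show that $H_M$ maps $A_M^{M_x}$ bijectively onto $(\fraka_M)_x^+$; surjectivity in the first map then follows from $A_M^{M_x}\subseteq \M_x(\A)$ combined with the containment, and the homomorphism property is automatic since $H_M$ is already a homomorphism on $M(\A)$.

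The containment relies on two compatibility identities. Since every $\chi \in X^*(M)$ is $F$-rational, $\chi(\bar m) = \overline{\chi(m)}$ for all $m \in M(\A)$, so $|\chi(\bar m)| = |\chi(m)|$ and hence $H_M(\bar m) = H_M(m)$ by the defining property of $H_M$. Second, for $x \in N_G(M)$, conjugation by $x$ preserves $\mathbf{T}_M$ and induces on $\fraka_M$ precisely the involution of $\imath_M(x) \in W(M,M)$ from the paragraph preceding the statement; equivalently, $H_M(xmx^{-1}) = x \cdot H_M(m)$. For $m \in \M_x(\A)$ the defining relation $x\bar m x^{-1}=m$ rearranges to $\bar m = x^{-1}mx$, so combining the two identities gives
\[
H_M(m) = H_M(\bar m) = H_M(x^{-1}mx) = x^{-1}\cdot H_M(m),
\]
which forces $H_M(m) \in (\fraka_M)_x^+$, since the action of $x$ on $\fraka_M$ is an involution.

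For the isomorphism on $A_M^{M_x}$, $H_M\colon A_M \to \fraka_M$ is an isomorphism because $\mathbf{T}_M$ is $F$-split and $A_M$ is the image of $\mathbf{T}_M(\R)^\circ$ under diagonal embedding. Since $\mathbf{T}_M$ is defined over $F$ with entries landing in the $E$-adeles through the inclusion $\A \hookrightarrow \A_E$, the bar action fixes $A_M$ pointwise. Hence for $a \in A_M$, the condition $a \in \M_x(\A)$ becomes $a = x\bar a x^{-1} = xax^{-1}$, i.e., $a$ commutes with $x$, which by the compatibility above is equivalent to $H_M(a) \in (\fraka_M)_x^+$. Restricting $H_M$ thus gives an isomorphism $A_M^{M_x} \cong (\fraka_M)_x^+$, completing the proof. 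The main obstacle is the careful bookkeeping of the bar action, conjugation by $x$, and the induced involution on $\fraka_M$; none of the individual pieces is hard, but the two compatibility identities $H_M(\bar m)=H_M(m)$ and $H_M(xmx^{-1})=x\cdot H_M(m)$ must be stated precisely for the argument to land cleanly.
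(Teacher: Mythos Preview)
Your proof is correct and follows the same approach as the paper's own proof: establish the containment $H_M(\M_x(\A)) \subseteq (\fraka_M)_x^+$ via the compatibility $H_M(xmx^{-1}) = x\cdot H_M(m)$, and prove the isomorphism on $A_M^{M_x}$ using that $H_M\colon A_M\to\fraka_M$ is an isomorphism intertwining conjugation by $x$ with the involution on $\fraka_M$ (the paper phrases this as $xe^{\nu}x^{-1}=e^{x\nu}$). Your write-up is in fact more careful than the paper's, which omits the verification $H_M(\bar m)=H_M(m)$ that is needed to pass from $x\bar m x^{-1}=m$ to $x\cdot H_M(m)=H_M(m)$.
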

\begin{proof}
Since $x e^\nu x^{-1} = e^{x\nu}$ for any $\nu \in \fraka_M$, the second part is true. To show the first part, we need to show $H_M(\M_x(\A)) \subseteq (\fraka_M)_x^+$. This is true since $H_M(x m x^{-1}) = xH_M(m)$ for any $m \in \M(\A)$. 
\end{proof}

\begin{defn}
    Let $x\in N_G(M)\cap X$. We say $x$ is $M$-minimal if $L(x)$ is standard Levi subgroup of $G$.
\end{defn}

\begin{lem}\label{orbitana}
    Let $x\in N_G(M)\cap X$ be $M$-minimal. We have 
    \begin{align*}
        \M_x&\cong \prod_{i\in S} \GLn{n_i}\times \prod_{i\in R} \ResGL{n_i} \times \prod_{i=l+1}^k \ResSp{n_i}\times \Spn{2m}.\\
        \L(x)=\L&\cong\prod_{i\in S} \ResGL{n_i}\times\prod_{i\in R} \ResGL{2n_i}\times \Un{2(n_{l+1}+\cdots+n_{k}+m)}.
    \end{align*}
    Further, if $\L(x)$ is contained in the Siegel parabolic subgroup, then
    \begin{align*}
        \L_x&\cong \prod_{i\in S} \GLn{n_i}\times\prod_{i\in R} \GLn{2n_i}.
    \end{align*}
\end{lem}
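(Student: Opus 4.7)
The plan is to leverage Lemma \ref{main lemma}: the set $wM \cap X$, when nonempty, is a single $M$-orbit under $\theta$-twisted conjugation, so the stabilizer $\M_x$ is determined up to isomorphism by $w$, and I may work with the explicit representative $x_w = t_w \imath(I_{n_1+\cdots+n_\ell}, \epsilon_{\ell+1}, \ldots, \epsilon_{n_k}; I_{2m})$ from equation (\ref{good xw}). The three assertions will then be read off by block-by-block analysis.

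For the computation of $\M_x$, write $d = \imath(d_1, \ldots, d_k; d') \in \M(F)$ and impose $d x_w \bar d^{-1} = x_w$. Rearranging using the conjugation $d \mapsto t_w^{-1} d\, t_w$, which realizes the $w$-action on $\M$ (fixing indices $i \in S$, swapping the pairs $(i,i+1)$ for $i \in R$, applying the outer involution $*$ to the indices $\ell+1 \leq i \leq k$, and fixing $d'$), one matches both sides block by block. This yields: $\bar d_i = d_i$ for $i \in S$, giving $\GLn{n_i}$ by Galois descent (cf.~Example \ref{two examples of single orbit}); the coupled relation $d_{i+1} = \bar d_i$ for $i \in R$, so that the pair is parametrized by a single $d_i \in \GL_{n_i}(E)$, yielding $\ResGL{n_i}$; the twisted relation $\bar d_i = \epsilon_{n_i}^{-1} d_i^{*} \epsilon_{n_i}$ for $\ell+1 \leq i \leq k$, which upon absorbing $\epsilon_{n_i}$ into the symplectic form $w_{n_i}\epsilon_{n_i}$ identifies the stabilizer with $\ResSp{n_i}$; and $\Spn{2m}$ on the central factor from the condition $\bar d' = d'$ inside $\Un{2m}(F)$.

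For $L(x)$, I use the remark preceding Lemma \ref{lem3.19} that $\fraka_{L(x)}^* = (\fraka_M^*)_x^+$, together with the fact that $L(x)$ is standard because $x$ is $M$-minimal. Under the embedding $\fraka_M^* \cong \R^k \hookrightarrow \fraka_0^* \cong \R^n$, the $w$-action fixes $\lambda_i$ for $i \in S$, swaps $\lambda_i \leftrightarrow \lambda_{i+1}$ for $i \in R$, and sends $\lambda_i \mapsto -\lambda_i$ for $\ell+1 \leq i \leq k$. Its fixed subspace, described by free $\lambda_i$ for $i \in S$, the equalities $\lambda_i = \lambda_{i+1}$ for $i \in R$, and the vanishing $\lambda_i = 0$ for $i > \ell$, is precisely the $\fraka^*$-space of the standard Levi $\prod_{i \in S}\ResGL{n_i} \times \prod_{i \in R}\ResGL{2n_i} \times \Un{2(n_{\ell+1}+\cdots+n_k+m)}$, which is therefore $L(x)$.

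For $\L_x$ under the Siegel-containment hypothesis, the unitary factor in $\L$ must be trivial, forcing $\ell = k$ and $m = 0$. Then $\L = \prod_{i \in S}\ResGL{n_i} \times \prod_{i \in R}\ResGL{2n_i}$ with $x \in \L \cap X$, and the $\theta$-twisted $\L$-action on $x$ decomposes into actions on each $\ResGL{\cdot}$ factor. On each such factor the symmetric space is a single orbit by Example \ref{two examples of single orbit} (Hilbert 90), so the stabilizer is $\L$-conjugate to the Galois-fixed points, giving $\GLn{n_i}$ for $i \in S$ and $\GLn{2n_i}$ for $i \in R$. The main technical obstacle is the symplectic identification in the blocks with $i > \ell$: verifying that the twisted-Galois condition $\bar d = \epsilon_{n_i}^{-1} d^* \epsilon_{n_i}$ cuts out $\Spn{n_i}$ requires careful matching of $\epsilon_{n_i}$ against the symplectic form via the relation $\epsilon_{n_i}\bar\epsilon_{n_i}^* = -I$ derived in the proof of Lemma \ref{main lemma}, and is the only step demanding significant bookkeeping with the $*$ involution.
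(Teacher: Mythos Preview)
Your proposal is correct and follows essentially the same approach as the paper: both reduce to the explicit representative $x_w$ from Lemma~\ref{main lemma} and read off the stabilizers block by block. Your treatment of $\L(x)$ via $(\fraka_M^*)_x^+$ spells out what the paper leaves as ``easy to see,'' and for $\L_x$ you invoke Hilbert~90 abstractly where the paper instead writes down the explicit conjugating matrix $\eta_i = \begin{pmatrix} I_{n_i} & iI_{n_i} \\ I_{n_i} & -iI_{n_i} \end{pmatrix}$ for $i \in R$; since this $\eta_i$ reappears later (e.g.\ in Lemma~\ref{lemma27}), it is worth recording explicitly, but for the present lemma your argument suffices.
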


\begin{proof}
    Applying Lemma \ref{main lemma} and choose $x=x_w$ (see (\ref{good xw})), we can see that $\M_x$ consists of elements of the form $\imath(g_1,\cdots,g_k;h)$, where
    \begin{align*}
        &g_i\in \GLn{n_i} & \text{ for } i\in S;\\
        &\begin{pmatrix} g_i & 0 \\ 0 & g_{i+1} \end{pmatrix}=\begin{pmatrix} g_i & 0 \\ 0 & \Bar{g_i} \end{pmatrix}\text{ for } g_i\in \ResGL{n_i} & \text{ for } i\in R;\\
        &g_i\in \ResSp{n_i} & \text{ for } i\in \{l+1,\cdots,k\};\\
        &h\in \Spn{2m}.
    \end{align*}
    It is easy to see that $\L(x)\cong\prod_{i\in S} \ResGL{n_i}\times\prod_{i\in R} \ResGL{2n_i}\times \Un{2(n_{l+1}+\cdots+n_{k}+m)}$. When $\L(x)$ is contained in the Siegel parabolic subgroup, i.e. $l=k$ and $m=0$, $\L_x$ consists of elements of the form $\imath(g_1,\cdots,g_l)$, where
    \begin{align*}
        &g_i\in \GLn{n_i} & \text{ for } i\in S;\\
        &g_i=\begin{pmatrix} \alpha & \beta \\ \bar{\beta} & \bar{\alpha} \end{pmatrix}\text{ where } g_i\in \ResGL{2n_i} \text{ and }\alpha,\beta\in \mathbf{Res_{E/F}Mat_{n_i\times n_i}}& \text{ for } i\in R.
    \end{align*}
    Let $\eta_i=I_{n_i}$ for $i\in S$ and $\eta_i = \begin{pmatrix} I_{n_i}&iI_{n_i}\\I_{n_i}&-iI_{n_i} \end{pmatrix}$ for $i \in R$, the isomorphism from $\prod_{i\in S} \GLn{n_i}\times\prod_{i\in R} \GLn{2n_i}$ to $\L_x$ will be given by $(g_1,\cdots,g_l) \mapsto \imath(\eta_1^{-1} g_1 \eta_1,\cdots,\eta_l^{-1} g_l \eta_l)$.
\end{proof}

\begin{defn}
    Let $\M$ be a Levi subgroup of $\G$ and $x\in N_G(M)\cap X$. We say $x$ is $M$-cuspidal if $\L(x)$ is contained in the Siegel parabolic subgroup of $\G$.
\end{defn}

\subsection{Graph}
Let $\P=\M\ltimes\U$ be a parabolic subgroup of $\G$, and let $\alpha\in\Delta_P$. Let $s_\alpha$ be the elementary symmetry associated to $\alpha$. 
We define a directed graph with labeled edge $\frakG$ as follows. The vertices of $\frakG$ are pairs $(M,x)$ where $M$ is a Levi subgroup of $G$ and $x\in N_G(M)\cap X$. The edges are $(M,x)\xrightarrow{n_\alpha}(M',x')$ that satisfy the following conditions:
\begin{enumerate}
    \item $\alpha\in\Delta_P$,
    \item $n_\alpha\in s_\alpha M$,
    \item $x\alpha\neq \pm \alpha$,
    \item $M'=s_\alpha M s_\alpha^{-1}=n_\alpha M n_\alpha^{-1}$,
    \item $x'=n_\alpha x n_\alpha^{-1}$.
\end{enumerate}

Note that if $(M,x)\xrightarrow{n_\alpha}(M',x')$, then $(M',x')\xrightarrow{n_\alpha^{-1}}(M,x)$. 
We denote $(M,x)\overset{n_\alpha}{\searrow}(M',x')$ if $(M,x)\xrightarrow{n_\alpha}(M',x')$ and $x\alpha<0$. Also, either $(M,x)\overset{n_\alpha}{\searrow}(M',x')$ or $(M',x')\overset{n_\alpha^{-1}}{\searrow}(M,x)$.

For a finite sequence of edges
\begin{align*}
    (M_1,x_1)\xrightarrow{n_{\alpha_1}}(M_2,x_2)\xrightarrow{n_{\alpha_2}}\cdots\xrightarrow{n_{\alpha_k}}(M_{k+1},x_{k+1})
\end{align*}
we write $(M_1,x_1)\overset{n}{\curvearrowright}(M_{k+1},x_{k+1})$ where $n=n_{\alpha_k}\cdots \alpha_1$. For a finite sequence of edges
\begin{align*}
    (M_1,x_1)\overset{n_{\alpha_{1}}}{\searrow}(M_2,x_2)\overset{n_{\alpha_{2}}}{\searrow}\cdots\overset{n_{\alpha_{k}}}{\searrow}(M_{k+1},x_{k+1})
\end{align*}
we write $(M_1,x_1)\overset{n}{\downarrow}(M_{k+1},x_{k+1})$ where $n=n_{\alpha_k}\cdots \alpha_1$.

\begin{lem}\label{graph}
    Given two vertices $(M,x)$ and $(M',x')$ and edge $(M,x)\overset{n_\alpha}{\searrow}(M',x')$ for some $\alpha\in\Delta_P$. Let $\Q=\L\ltimes\V$ be a parabolic subgroup of $\G$ such that $\Delta_P^Q=\{\alpha\}$. Let $\P'=\M'\ltimes\U'$ be a parabolic subgroup of $\G$ such that $\Delta_{P'}^Q=\{-s_\alpha(\alpha)\}$. Then we have the following.
    \begin{enumerate}
        \item $\V_x=n_\alpha \U_x n_\alpha^{-1}$, and further $n_\alpha \U_x n_\alpha^{-1}\subseteq\U_{x'}'$.
        \item We have the following short exact sequence of subgroups:
        \[
        \begin{tikzcd}
              1 \ar[r] & n_\alpha \U_x n_\alpha^{-1}  \ar[r]& \U_{x'}'  \ar[r, "\pr_L"] & \L\cap\U'  \ar[r] & 1. 
        \end{tikzcd}
        \]
        \item For $f: \V\backslash \U'(\A)\to \C$, we have 
        \begin{align*}
            \int_{n_\alpha\U_x(\A) n_\alpha^{-1}\backslash\U_{x'}'(\A)} f(u)\;du=\int_{\V(\A)\backslash\U'(\A)}f(u)\;du=\int_{(\L\cap\U')(\A)}f(u)\;du
        \end{align*}
        whenever the integral is defined.
        \item $n_\alpha\P_x n_\alpha^{-1}\subseteq\P_{x'}'$, and the semi-invariant measure on $n_\alpha\P_x(\A) n_\alpha^{-1}\backslash\P_{x'}'(\A)$ is given by the integration over $n_\alpha\U_x(\A) n_\alpha^{-1}\backslash\U_{x'}'(\A)$.
        \item We have the following equations.
        \begin{align*}
            &\delta_{P_x}(m)=(\delta_{P'_{x'}}\delta^{-1}_{P'\cap L})(n_\alpha m n_\alpha^{-1}), &\text{for all }m\in\M_x(\A),\\
            &(\delta_P^{-\frac{1}{2}}\delta_{P_x})(m)=(\delta_{P'_{x'}}\delta^{-\frac{1}{2}}_{P'})(n_\alpha m n_\alpha^{-1}), &\text{for all }m\in\M_x(\A).
        \end{align*}
        In particular, $$n_\alpha \rho_x=\rho_{x'}.$$
    \end{enumerate}
\end{lem}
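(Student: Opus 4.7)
The plan is to base everything on the single structural identity $n_\alpha U n_\alpha^{-1} = U'$. First I would establish this as follows: since $L$ normalizes $V$ and $n_\alpha\in L$, one has $n_\alpha V n_\alpha^{-1}=V$; and inside $L$, the two parabolics $P\cap L$ and $P'\cap L$ have opposite unipotent radicals which are interchanged by $s_\alpha$ (reflecting $\Delta_{P'}^Q=\{-s_\alpha\alpha\}$), giving $n_\alpha(U\cap L)n_\alpha^{-1}=U'\cap L$. Combining via the Levi decompositions $U=(U\cap L)\cdot V$ and $U'=(U'\cap L)\cdot V$ yields the identity. Passing to twisted stabilizers, and arranging $n_\alpha$ to be Galois-invariant (which one can), the routine calculation
\[
(n_\alpha u n_\alpha^{-1})\cdot x'\cdot \overline{(n_\alpha u n_\alpha^{-1})}^{-1}=n_\alpha(ux\bar u^{-1})n_\alpha^{-1}
\]
gives $n_\alpha U_x n_\alpha^{-1}=U'_{x'}$, hence the inclusion in Part (1).

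For the equality $V_x=n_\alpha U_x n_\alpha^{-1}$, the heart of the matter is to show $U_x\subseteq V$, i.e. that no element of $U_x$ has a non-trivial component in $U\cap L$. I would decompose $u=u_0 v$ with $u_0\in U\cap L$ and $v\in V$; the equation $ux\bar u^{-1}=x$, projected along the Levi piece of $L$, forces $u_0$ to satisfy a $\theta$-cocycle condition in the rank-one quotient carried by $\alpha$. Under the hypothesis $x\alpha<0$, the twisted conjugation by $x$ sends the $\alpha$-root space into the opposite parabolic piece of $L$, so the resulting cocycle can be trivialized by a coboundary taking values in $V$, by the Hilbert-90-type $H^1$-vanishing for unipotent groups (precisely as used in Lemma \ref{lem 3.7}). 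This yields $U_x\subseteq V_x$, and the reverse inclusion is trivial. For Part (2), the projection $\pr_L:U'_{x'}\to L\cap U'$ has kernel $U'_{x'}\cap V$, which by the previous argument coincides with $n_\alpha U_x n_\alpha^{-1}$; surjectivity follows from the Levi decomposition $U'=(L\cap U')\cdot V$ applied inside the stabilizer together with the fact that, by hypothesis, the simple root $-s_\alpha\alpha$ of $L\cap U'$ is fixed up to sign by the twisted action induced by $x'$.

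Part (3) is then the disintegration of Haar measure along the short exact sequence of Part (2), together with the natural identification of $L\cap U'$ with $V\backslash U'$. Part (4) combines Lemma \ref{lem3.9} (giving $P_x=M_x\ltimes U_x$ and $P'_{x'}=M'_{x'}\ltimes U'_{x'}$) with the edge relation $n_\alpha M_x n_\alpha^{-1}=M'_{x'}$ and Part (1), reducing the semi-invariant measure on $n_\alpha P_x n_\alpha^{-1}\backslash P'_{x'}$ to the unipotent quotient handled in Part (3). For Part (5), I would write $\delta_{P_x}(m)$ as the absolute value of $\det \operatorname{Ad}(m)$ on $\operatorname{Lie}(U_x)$, and similarly for $\delta_{P'_{x'}}$ evaluated at $n_\alpha m n_\alpha^{-1}$; the exact sequence from Part (2), viewed on Lie algebras as an exact sequence of $M_x$-modules under the $n_\alpha$-twisted adjoint action, supplies the correction factor $\delta_{P'\cap L}$. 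The second identity then follows by combining the first with the multiplicative splitting $\delta_P=\delta_{P\cap L}\cdot (\delta_Q|_M)$ on $M$ and the analogous factorization for $P'$; the formula $n_\alpha\rho_x=\rho_{x'}$ is the logarithmic shadow of these relations, obtained by halving.

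The main obstacle is the equality in Part (1), equivalently the absorption $U_x\subseteq V$. The inclusion $n_\alpha U_x n_\alpha^{-1}\subseteq U'_{x'}$ is a formal manipulation, but promoting this to the stronger equality $V_x=n_\alpha U_x n_\alpha^{-1}$ genuinely uses the sign hypothesis $x\alpha<0$: without it, the $\alpha$-root component of a stabilizer element need not vanish. Executing the Hilbert-90 reduction in the correct rank-one root subgroup of $L$, and verifying that the resulting coboundary really lands inside $V$ rather than just in $U\cap L$, is the delicate moment that mirrors the corresponding step in Lemma \ref{lem 3.7}. Once this reduction is secured, the remaining parts are essentially formal consequences of the nested-parabolic geometry and the standard modular-character calculus.
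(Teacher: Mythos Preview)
The paper does not prove this lemma directly; it cites \cite[Lemma 4.3.1]{LR}. Your attempt at a self-contained argument, however, rests on a false structural identity.

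You claim $n_\alpha U n_\alpha^{-1} = U'$, arguing that $P\cap L$ and $P'\cap L$ are opposite parabolics of $L$ interchanged by $s_\alpha$. This is wrong: the condition $\Delta_{P'}^Q=\{-s_\alpha\alpha\}$ does \emph{not} make $P'\cap L$ opposite to $P\cap L$. Since $s_\alpha$ is the elementary symmetry, $s_\alpha\alpha<0$, so $-s_\alpha\alpha$ is a \emph{positive} simple root for $M'$ in $L$, and $P'\cap L$ is the standard parabolic of $L$ with Levi $M'$. What $s_\alpha$ actually does is send $U\cap L$ to the unipotent radical \emph{opposite} to $U'\cap L$. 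Take $G=\GL_3$, $M=T$, $\alpha=\alpha_1$: then $M'=T$, $-s_\alpha\alpha=\alpha_1$, so $P'=P=B$, yet $s_1(U\cap L)$ is the lower-triangular unipotent of the $\GL_2$-block, not $U'\cap L=U\cap L$. In general $n_\alpha U n_\alpha^{-1}\cap U'=V$, strictly smaller than $U'$.

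This error is fatal downstream. Your ``routine calculation'' would yield $n_\alpha U_x n_\alpha^{-1}=U'_{x'}$, which immediately contradicts Part~(2): the exact sequence would force $L\cap U'=1$. In fact, even the inclusion $n_\alpha U_x n_\alpha^{-1}\subseteq U'_{x'}$ is \emph{not} the formal manipulation you claim, because $n_\alpha U n_\alpha^{-1}\not\subseteq U'$. One must first prove $U_x\subseteq V$ (equivalently $U_x=V_x$), so that $n_\alpha U_x n_\alpha^{-1}\subseteq n_\alpha V n_\alpha^{-1}=V\subseteq U'$; only then does the twisted-conjugation identity place it inside $U'_{x'}$. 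This is exactly the step you flag as the ``main obstacle,'' and it is where the hypothesis $x\alpha<0$ genuinely enters---but it is needed for the inclusion itself, not merely for upgrading an inclusion to an equality. The surjectivity in Part~(2) is a separate argument, not a consequence of a Levi decomposition of $U'_{x'}$ as you suggest; your justification there is too vague to stand on its own.
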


\begin{proof}
    The lemma is proved in \cite[Lemma 4.3.1]{LR}.
\end{proof}

We have the following corollary.

\begin{cor}\label{Straightforward equation of unimodular character}
    Given $(M,x)\overset{n}{\curvearrowright}(M',x')$, we have
    \begin{align*}
            &(\delta_P^{-\frac{1}{2}}\delta_{P_x})(m)=(\delta_{P'_{x'}}\delta^{-\frac{1}{2}}_{P'})(n_\alpha m n_\alpha^{-1}), &\text{for all }m\in\M_x(\A).
    \end{align*}
    In particular, $$n_\alpha \rho_x=\rho_{x'}.$$
\end{cor}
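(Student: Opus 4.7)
The plan is a straightforward induction on the length $k$ of the sequence of edges witnessing $(M,x)\overset{n}{\curvearrowright}(M',x')$, with Lemma \ref{graph}(5) supplying the base case. The only minor subtlety is that Lemma \ref{graph}(5) is stated for descending edges $\overset{n_\alpha}{\searrow}$, whereas $\curvearrowright$ assembles arbitrary edges $\xrightarrow{n_\alpha}$, so I must first upgrade the base case to arbitrary single edges.

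For the upgrade, I would exploit the observation made immediately after the definition of the graph that every edge $(M,x)\xrightarrow{n_\alpha}(M',x')$ either is itself descending or has a descending reverse $(M',x')\overset{n_\alpha^{-1}}{\searrow}(M,x)$. In the latter case, Lemma \ref{graph}(5) applied to the reverse descending edge reads
\[
(\delta_{P'}^{-1/2}\delta_{P'_{x'}})(m') = (\delta_{P_x}\delta_P^{-1/2})(n_\alpha^{-1} m' n_\alpha), \qquad m' \in \M'_{x'}(\A).
\]
Substituting $m' = n_\alpha m n_\alpha^{-1}$ for $m \in \M_x(\A)$ recovers the identity of the corollary for this edge; so Lemma \ref{graph}(5) in fact holds for any single edge of $\frakG$. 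For the inductive step, I take a path of length $k+1$, set $n = n_{\alpha_{k+1}}\cdots n_{\alpha_1}$ and $n' = n_{\alpha_k}\cdots n_{\alpha_1}$, apply the inductive hypothesis to the truncated path of length $k$ to get
\[
(\delta_{P_1}^{-1/2}\delta_{P_{1,x_1}})(m) = (\delta_{P_{k+1,x_{k+1}}}\delta_{P_{k+1}}^{-1/2})(n' m (n')^{-1}),
\]
and then feed $n' m (n')^{-1} \in \M_{k+1,x_{k+1}}(\A)$ into the single-edge identity for the final edge; the composition $n = n_{\alpha_{k+1}} n'$ of conjugations gives precisely the desired identity for the full path.

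For the ``in particular'' assertion (where $n_\alpha$ in the displayed equation should be read as $n$), I would expand each modulus character as $\delta_Q(\cdot) = e^{\langle 2\rho_Q, H_Q(\cdot)\rangle}$, read the just-established identity as an equality of exponential characters on $\M_x(\A)$, and restrict to $a \in A_M^{M_x}$, which by Lemma \ref{lem3.19} surjects onto the fixed eigenspace $(\fraka_M)^+_x$ under $H_M$. Matching linear parts of the two exponentials then forces $n \rho_x = \rho_{x'}$ on the relevant subspace, which suffices since $\rho_x$ is determined by its restriction there. The proof is essentially bookkeeping; I do not foresee any genuine obstacle beyond correctly tracking the composition of conjugations across the path, so the ``hard part'' (if there is one) is really the direction-reversal step that lifts Lemma \ref{graph}(5) from $\searrow$ to general $\to$.
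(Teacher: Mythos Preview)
Your proposal is correct and matches the paper's intent: the paper gives no explicit proof and simply presents the statement as an immediate corollary of Lemma \ref{graph}(5), so the induction on path length together with the direction-reversal observation is exactly the implicit argument. One small simplification: for the ``in particular'' you need not invoke Lemma \ref{lem3.19} and re-derive $n\rho_x=\rho_{x'}$ from the modular-character identity; since Lemma \ref{graph}(5) already records $n_\alpha\rho_x=\rho_{x'}$ for a single descending edge, the same reversal-plus-induction gives $n\rho_x=\rho_{x'}$ directly.
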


Moreover, by Corollary \ref{Straightforward equation of unimodular character} and \cite[Lemma 3.2.1 and Proposition 3.3.1]{LR}, we have the following.

\begin{cor}\label{graph.fin}
    Let $M$ be a Levi subgroup of $G$ and $x\in N_G(M)\cap X$. Then there exists $n\in G$ such that $M'=nMn^{-1}$ is standard Levi subgroup, $x'=nxn^{-1}$ is $M'$-minimal, and $(M,x)\overset{n}{\downarrow}(M',x')$. Let $P'$ be the standard parabolic subgroup, with $M'$ as its Levi part. Then, we have
    \begin{align*}
            &(\delta_P^{-\frac{1}{2}}\delta_{P_x})(m)=(\delta_{P'_{x'}}\delta^{-\frac{1}{2}}_{P'})(n_\alpha m n_\alpha^{-1}), &\text{for all }m\in\M_x(\A).
    \end{align*}
    In particular, $$n_\alpha \rho_x=\rho_{x'}.$$.
\end{cor}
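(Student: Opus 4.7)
The plan is to construct $n \in G$ as a product $n = n_{\alpha_k} \cdots n_{\alpha_1}$ arising from a maximal downward path in the graph $\frakG$ starting at $(M,x)$, and then to obtain the modulus identity by iterating the single-edge relation of Lemma~\ref{graph}(5), as packaged in Corollary~\ref{Straightforward equation of unimodular character}.

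First I would verify existence of the descent path. Starting from $(M,x)$, I look for some $\alpha \in \Delta_P$ with $x\alpha \ne \pm\alpha$ and $x\alpha < 0$; any such $\alpha$ produces an outgoing downward edge $(M,x) \overset{n_\alpha}{\searrow}(s_\alpha M s_\alpha^{-1}, n_\alpha x n_\alpha^{-1})$. Iterating this, and after each step possibly conjugating by an element of the Weyl group of the new Levi so that the ambient Levi is again standard, one obtains a sequence of vertices $(M_1,x_1) \overset{n_{\alpha_1}}{\searrow} (M_2,x_2) \overset{n_{\alpha_2}}{\searrow} \cdots$. I would appeal to \cite[Lemma 3.2.1]{LR} to ensure the descent can be iterated whenever the target condition is not yet met, and to \cite[Proposition 3.3.1]{LR} to keep $M_i$ standard along the way. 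Termination follows from a well-founded invariant (for example, the length of $\imath_{M_i}(x_i) \in W(M_i,M_i)$ strictly decreases along downward edges). When the process terminates at $(M',x')$, no further descent is possible, i.e.\ every $\alpha \in \Delta_{P'}$ satisfies either $x'\alpha > 0$ or $x'\alpha = -\alpha$. Combined with the involution condition $(x')^2 = e$ and the structural description of minimal involutions in $\frakW_n$ from \S\ref{secmin.inv}, this is exactly the assertion that $x'$ is $M'$-minimal with $L(x')$ standard; hence the desired $(M',x')$ has been reached via $(M,x) \overset{n}{\downarrow}(M',x')$.

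Second, having produced the path, I would assemble the modulus identity. Applying Lemma~\ref{graph}(5) to each single edge $(M_i,x_i) \overset{n_{\alpha_i}}{\searrow}(M_{i+1},x_{i+1})$ yields
\[
(\delta_{P_i}^{-1/2}\delta_{(P_i)_{x_i}})(m_i) = (\delta_{(P_{i+1})_{x_{i+1}}}\delta_{P_{i+1}}^{-1/2})(n_{\alpha_i} m_i n_{\alpha_i}^{-1}), \qquad m_i \in \M_i(\A),
\]
and composing these identities down the chain produces
\[
(\delta_P^{-1/2}\delta_{P_x})(m) = (\delta_{P'_{x'}}\delta_{P'}^{-1/2})(n m n^{-1}), \qquad m \in \M_x(\A),
\]
which is precisely Corollary~\ref{Straightforward equation of unimodular character} applied to the full path. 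Taking $m = e^\nu$ for $\nu \in (\fraka_M)_x^+$ and using Lemma~\ref{lem3.19} to identify $\M_x(\A)$-arguments with values in $\fraka_M$, the exponential form of this identity reads $n\rho_x = \rho_{x'}$ as the special case.

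The main obstacle, in my view, is not the accumulation of the identity (which is a purely formal iteration once the single edge case of Lemma~\ref{graph} is in hand), but rather the book-keeping required to ensure that the descent path can be chosen so that $M'$ is genuinely standard and $(M',x')$ is the $M'$-minimal vertex described in the statement. Both conditions must be compatible simultaneously: the downward condition $x_i\alpha_i < 0$ controls the growth of the modulus characters in the right direction, while standardness of $M'$ and $M'$-minimality of $x'$ are obtained from the combinatorics of the signed permutation group $\frakW_n$ via the bijection $\jmath_M$. Careful invocation of \cite[Lemma 3.2.1 and Proposition 3.3.1]{LR} in the present unitary-group setting (where the Galois involution plays the role of $\theta$ and the signed permutation structure replaces the symmetric group) is what makes the terminal vertex land on an $M'$-minimal $x'$ with $L(x')$ standard.
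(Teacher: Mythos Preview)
Your proposal is correct and follows essentially the same approach as the paper: the paper's proof consists of a single sentence citing Corollary~\ref{Straightforward equation of unimodular character} together with \cite[Lemma 3.2.1 and Proposition 3.3.1]{LR}, and your argument is precisely an unpacking of what those references accomplish (existence and termination of the descent path, and iteration of the single-edge modulus identity). One small remark: the extra step you mention of ``possibly conjugating by an element of the Weyl group of the new Levi so that the ambient Levi is again standard'' is unnecessary, since the elementary symmetries $s_\alpha$ for $\alpha \in \Delta_P$ already send standard Levi subgroups to standard Levi subgroups by their construction in \cite[I.1.7]{MW}.
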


\section{Period Integral}
\subsection{Vanishing Pairs}
In this section let $\G$ be a reductive group and $\H$ a reductive subgroup defined over $F$. Recall the notation
\[
[\H]_G = A_G^H H \mo \H(\A) \subseteq [\G].
\]

\begin{defn}
A pair $(\G,\H)$ of groups is a vanishing pair if
\[
\int_{H \mo \H(\A)^1} \phi(h) dh = 0
\]
for every smooth cuspidal function of uniform moderate growth $\phi$ on $[\G]$.
\end{defn}
We have the following vanishing pairs.
\begin{thm}
The following are vanishing pairs
\begin{enumerate}
    \item $(\U_{2n},\Sp_{2n})$ (see \cite{MO2});
    \item $(\GL_{2n},\Sp_{2n})$ (see \cite{JR}).
\end{enumerate}
\end{thm}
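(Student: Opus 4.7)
The theorem is a direct quotation of results established in the cited references, so my plan is to invoke \cite{MO2} for part (1) and \cite{JR} for part (2). Below I outline the shared strategy underlying both proofs, which one would follow in a self-contained argument.

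In both cases the proof proceeds by a Fourier expansion of the cusp form $\phi$ along a carefully chosen unipotent subgroup of $\G$, combined with an analysis of $\P\mo \G/\H$ orbits. One takes the Siegel parabolic $\P = \M \ltimes \U$ of $\G$ (so $\M \cong \ResGL{n}$ when $\G = \U_{2n}$, and $\M \cong \GLn{n} \times \GLn{n}$ when $\G = \GLn{2n}$) and unfolds the period $\int_{H \mo \H(\A)^1} \phi(h) \, dh$ over the $\P$-orbits on the symmetric space $X \cong G/H$. By the orbit analysis developed in Section~3 of the present paper (Lemmas~\ref{lem3.9}--\ref{lem3.10} and their $\GLn{2n}$-analogues), each orbital contribution can be rewritten as an integral over a quotient $H_x \mo \H_x(\A)^1$ of a twisted translate of $\phi$.

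The central step is then a Fourier expansion of $\phi$ along the unipotent radical of an auxiliary parabolic of $\G$ adapted to each orbit, producing a sum of Whittaker-type coefficients indexed by orbits of additive characters on the unipotent quotient. The combinatorial heart of the argument is that, in every orbit, the character on the integration domain extends to a character of a strictly larger unipotent subgroup that is non-generic, i.e.\ supported on a proper standard parabolic of $\G$. Cuspidality of $\phi$ then forces every such Fourier coefficient to vanish, and the period integral is zero.

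The main obstacle is the orbit analysis together with the accompanying stabilizer computation: one must exhibit the degenerating character explicitly in each orbit and verify that it is genuinely non-generic with respect to some maximal unipotent. For the unitary case, \cite{MO2} establishes the relevant orbit structure on $\U_{2n}/\Sp_{2n}$ and carries out the Fourier expansion, while for $\GLn{2n}$ the argument of \cite{JR} exploits the mirabolic filtration and iterated Fourier expansion along columns. In both settings the inner integrals reduce to honestly degenerate Whittaker coefficients of $\phi$, which vanish by cuspidality, so I would simply cite these references rather than reproduce their detailed computations.
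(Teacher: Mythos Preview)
Your proposal is correct and matches the paper's approach: the paper offers no proof of this theorem beyond citing \cite{MO2} and \cite{JR}, exactly as you do in your opening sentence. Your supplementary sketch of the underlying arguments goes beyond what the paper provides and is broadly on target, so there is nothing to correct.
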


\begin{cor}\label{cor4.5}
Let $(\G,\H) = (\U_{2n},\Sp_{2n})$, and let $\M$ be a Levi subgroup of $\G$. Then for any $x \in N_G(M) \cap X$, $(\M,\M_x)$ is a vanishing pair unless $x$ is $M$-cuspidal.
\end{cor}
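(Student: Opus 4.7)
The plan is to reduce to the case where $x$ is $M$-minimal and then exploit the explicit product decomposition of $(\M,\M_x)$ provided by Lemma \ref{orbitana}, applying a product-of-vanishing-pairs principle once a known vanishing sub-pair is identified.

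First, Corollary \ref{graph.fin} furnishes an element $n\in G$ for which $M' := nMn^{-1}$ is a standard Levi subgroup of $\G$ and $x' := nxn^{-1}$ is $M'$-minimal. Conjugation by $n$ defines an $F$-isomorphism of pairs $(\M,\M_x)\cong(\M',\M'_{x'})$, and since $M$-cuspidality depends only on $\L(x)$ up to $G$-conjugacy, it transports to $M'$-cuspidality of $x'$. Thus I may assume $x$ is $M$-minimal. Lemma \ref{orbitana} then exhibits $(\M,\M_x)$ as a product of pairs of four types, corresponding to indices in $S$, indices in $R$ (where $n_i=n_{i+1}$ and $\ResGL{n_i}$ sits inside the $(i,i+1)$-block as $g\mapsto(g,\bar g)$), indices in $\{l+1,\dots,k\}$, and the trailing unitary block of size $2m$, namely
\[
(\ResGL{n_i},\GLn{n_i}),\quad (\ResGL{n_i}\times\ResGL{n_i},\ResGL{n_i}),\quad (\ResGL{n_i},\ResSp{n_i}),\quad (\Un{2m},\Spn{2m}).
\]

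Next I observe that $x$ is $M$-cuspidal exactly when $\L(x)$ lies in the Siegel parabolic, equivalently when $l=k$ and $m=0$, which is precisely the case that the latter two types of factor are absent. Conversely, if $x$ is not $M$-cuspidal, then either $m\geq 1$, so that $(\Un{2m},\Spn{2m})$ appears as a factor and is a vanishing pair by part~(1) of the theorem, or there exists $j\in\{l+1,\dots,k\}$, so that $(\ResGL{n_j},\ResSp{n_j})$ appears; the latter corresponds under restriction of scalars to $(\GL_{n_j},\Sp_{n_j})$ over $E$, with $n_j$ even by Lemma \ref{main lemma}, hence is a vanishing pair by part~(2).

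Finally, I would prove the following product-stability lemma: whenever $(\G_1,\H_1)$ is a vanishing pair of reductive $F$-groups and $(\G_2,\H_2)$ is any such pair, the product $(\G_1\times\G_2,\H_1\times\H_2)$ is again a vanishing pair. Indeed, for a smooth cuspidal function $\phi$ of uniform moderate growth on $[\G_1\times\G_2]$, cuspidality along parabolics of the form $\P_1\times\M_2$ implies that $\phi(\cdot,g_2)$ is smooth cuspidal of uniform moderate growth on $[\G_1]$ for each $g_2$; the vanishing hypothesis kills the inner integral, and Fubini together with $(\H_1\times\H_2)(\A)^1 = \H_1(\A)^1\times\H_2(\A)^1$ yields the result. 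Iterating this principle, peeling off the guaranteed vanishing factor identified in the previous step, completes the proof. The principal technical obstacle is the routine but delicate bookkeeping of this last step: verifying that cuspidality and uniform moderate growth restrict correctly to factors of a product, and that the pair $(\ResGL{n_j},\ResSp{n_j})$ over $F$ genuinely inherits the vanishing property from $(\GL_{n_j},\Sp_{n_j})$ over $E$ through restriction of scalars.
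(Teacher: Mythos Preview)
Your argument is correct and follows the same strategy as the paper's very brief proof: use the orbit analysis (Lemma~\ref{orbitana}, after reducing to the $M$-minimal case via Corollary~\ref{graph.fin}) to exhibit $(\M,\M_x)$ as a product, identify a vanishing factor, and invoke the factorization principle for period integrals. You are in fact more thorough than the paper, which mentions only the central $(\Un{2m},\Spn{2m})$ block, whereas you also treat the case $m=0$, $l<k$ via the $(\ResGL{n_j},\ResSp{n_j})$ factor and the Jacquet--Rallis theorem over $E$, and you make the product-stability lemma explicit.
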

\begin{proof}
The result follows from the orbit analysis in the pervious chapter. The central block of the pair is conjugate to $(\U_{2m},\Sp_{2m})$. Hence the corresponding period integral admits a zero factor.
\end{proof}
\subsection{The Intertwining Period}
\subsubsection{Definition}
Let $\P =\M \ltimes \U$ be a parabolic subgroup of $\G$, and let $x \in N_G(M) \cap X$. For $\varphi \in \Au_P^{mg}(G)$ and $\lambda \in \rho_x + (\fraka_{M,\C}^*)_x^-$, we define, whenever convergent,
\[
J(\varphi,x,\lambda) = \int_{A_M^{M_x}U_x(\A)M_x \mo G_x(\A)} \varphi_\lambda(h \eta_x) dh,
\]
where $\eta_x \in G$ is such that $\eta \cdot e = x$.
Note that the integral formally makes sense and does not depend on the choice of $\eta$.
Moreover, we have
\begin{align*}
 J(\varphi,x,\lambda) &= \int_{\P_x(\A) \mo \G_x(\A)} \int_{[\M_x]_M} \delta_{P_x}^{-1}(m) \varphi_\lambda(mh\eta) dm dh\\
 &= \int_{\P_x(\A) \mo \G_x(\A)} e^{\langle \lambda, H_M(h\eta) \rangle} \int_{[\M_x]_M} \delta_{P_x}^{-1}(m) e^{\langle \rho_x, H_M(m) \rangle} \varphi(mh\eta) dm dh,
\end{align*}
by noticing that $H_M(mh\eta) = H_M(m) + H_M(h\eta)$ and $\lambda - \rho_x \in (\fraka_{M,\C}^*)_x^-$.

\vskip 1cm 

\subsubsection{Convergence of the Intertwining Period}
Let $\Sigma_{P,x} = \{\alpha \in \Sigma_P \mid x\alpha < 0\}$. For $\gamma > 0$, we define
\[
\mathfrak{D}_x(\gamma) = \rho_x + \{\lambda \in (\fraka_M^*)_x^-: \langle\lambda, \alpha^\vee\rangle > \gamma, \forall\alpha \in \Sigma_{P,x}\}.
\]
When $x$ is $M$-minimal, recall that $w= \imath(x) \in W(M,M)$ is $M$-minimal and $L = L(x) = L(w)$ is a standard Levi subgroup. Furthermore, we have:
\[
(\fraka_{M,\C}^*)_x^- = (\fraka_M^L)_\C^* = \{(\lambda_i)_i \in \C^k \mid \lambda_i = -\lambda_i \text{ for } i \in R \text{ and } \lambda_i = 0 \text{ otherwise}\},
\]
where $R = \setc_<(\mathfrak{J}_M(w))$ is defined in Section 3.4.

We have the following result (cf. \cite{LR},\cite{LO}).
\begin{lem}
Let $(M,x)$ and $(M^\prime,x^\prime)$ be vertices in the graph $\mathfrak{G}$ such that $(M,x) \overset{n_\alpha}{\searrow} (M^\prime,x^\prime)$ be vertices for some $\alpha \in \Delta_P$ and $n_\alpha \in s_\alpha M$. Then
\[
\mathfrak{D}_x(\gamma) = s_\alpha^{-1} \mathfrak{D}_{x^\prime}(\gamma) \cap (\rho_x + \{\lambda \in (\fraka_M^*)_x^-: \pair{\lambda}{\alpha^\vee} > \gamma\}).
\]
\end{lem}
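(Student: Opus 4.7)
The plan is to translate the claimed identity by $\rho_x$ and then compare the defining linear constraints on the eigenspace $(\fraka_M^*)_x^-$. Two inputs are immediate from the graph structure. Lemma~\ref{graph}(5) gives $s_\alpha \rho_x = \rho_{x'}$ (since $n_\alpha$ acts on $\fraka^*$ as $s_\alpha$), and the relation $x' = s_\alpha x s_\alpha^{-1}$ implies that $s_\alpha$ restricts to an $\R$-linear isomorphism $(\fraka_M^*)_x^- \to (\fraka_{M'}^*)_{x'}^-$. Writing $\mathfrak{D}_x(\gamma) = \rho_x + C_x(\gamma)$ where
$$
C_x(\gamma) = \{\lambda \in (\fraka_M^*)_x^- : \langle\lambda,\beta^\vee\rangle > \gamma \text{ for all } \beta \in \Sigma_{P,x}\},
$$
and similarly for $C_{x'}(\gamma)$, these two inputs reduce the lemma to the cone identity
$$
C_x(\gamma) = s_\alpha^{-1}C_{x'}(\gamma) \cap \{\lambda \in (\fraka_M^*)_x^- : \langle\lambda,\alpha^\vee\rangle > \gamma\}.
$$

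To establish this, I would next analyze $s_\alpha^{-1}\Sigma_{P',x'}$ as a subset of $\Sigma_M$. Because $\alpha$ is a simple restricted root with respect to $P$, the elementary reflection $s_\alpha$ permutes $\Sigma_P\setminus\{\alpha\}$ and swaps $\alpha \leftrightarrow -\alpha$; consequently $\Sigma_{P'} = s_\alpha(\Sigma_P)$ differs from $\Sigma_P$ by exactly this single wall-flip. Translating $\beta = s_\alpha\delta \in \Sigma_{P',x'}$ back through $s_\alpha$ using $x' = s_\alpha x s_\alpha^{-1}$ unfolds the membership condition into ``$\delta \in \Sigma_P$ and $x\delta$ is negative with respect to $P'$''. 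Comparing the two positivity notions shows that the only possible discrepancy between the resulting set and $\Sigma_{P,x}$ is concentrated at the $x$-paired roots $\{\alpha, -x\alpha\} \subset \Sigma_{P,x}$, which are distinct thanks to the graph assumption $x\alpha \neq \pm\alpha$.

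The decisive observation is that on the eigenspace $(\fraka_M^*)_x^-$ the two linear constraints associated to $\alpha$ and to $-x\alpha$ coincide. Indeed, using the $W$-invariance of the Euclidean pairing together with $x^2 = \mathrm{id}$ and $x\lambda = -\lambda$,
$$
\langle\lambda,(-x\alpha)^\vee\rangle = -\langle\lambda,(x\alpha)^\vee\rangle = -\langle x\lambda,\alpha^\vee\rangle = \langle\lambda,\alpha^\vee\rangle.
$$
Intersecting $s_\alpha^{-1}C_{x'}(\gamma)$ with the half-space $\{\langle\lambda,\alpha^\vee\rangle > \gamma\}$ therefore restores the single constraint not recorded by $s_\alpha^{-1}\Sigma_{P',x'}$, yielding $C_x(\gamma)$. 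The main obstacle is the combinatorial bookkeeping in the previous paragraph: carefully identifying which $x$-pair of roots is responsible for the discrepancy between the two positivity conventions under $s_\alpha$. Once that is settled, the remainder is a direct unwinding of definitions, and the argument mirrors the analogous computation in \cite{LO}.
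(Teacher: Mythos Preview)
The paper does not give its own proof of this lemma; it simply records the statement with a reference to \cite{LR} and \cite{LO}. Your argument is correct and is essentially the standard one from those references: translate by $\rho_x$ using $s_\alpha\rho_x=\rho_{x'}$, compute $s_\alpha^{-1}\Sigma_{P',x'}=\Sigma_{P,x}\setminus\{\alpha,-x\alpha\}$, and then observe that on $(\fraka_M^*)_x^-$ the two dropped constraints coincide via $\langle\lambda,(-x\alpha)^\vee\rangle=\langle\lambda,\alpha^\vee\rangle$, so a single extra half-space recovers $C_x(\gamma)$.

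One small wording point: for an elementary symmetry with $M'\neq M$, $s_\alpha$ does not literally ``permute $\Sigma_P\setminus\{\alpha\}$'' nor ``swap $\alpha\leftrightarrow-\alpha$''; what holds (and what you actually use) is that $s_\alpha$ sends $\Sigma_P\setminus\{\alpha\}$ bijectively into $\Sigma_{P'}$ and sends $\alpha$ to a negative root, whence $s_\alpha^{-1}\Sigma_{P'}=(\Sigma_P\setminus\{\alpha\})\cup\{-\alpha\}$. This is enough for your computation, so the slip is cosmetic.
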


Let $\M$ be a Levi subgroup of $\G$ contained in the Siegel parabolic subgroup. We have the following theorem on the convergence of the intertwining period.

\begin{thm}\label{convthm}
There exists $\gamma > 0$, for all $x \in N_G(M) \cap X$ and $\varphi \in \mathcal{A}_P^{rd}(G)$ the integral defining $J(\varphi,x,\lambda)$ is absolutely convergent for $\mathrm{Re}\;\lambda \in \mathfrak{D}_x(\gamma)$. Moreover, for any compact subset $D$ of $\mathfrak{D}_x(\gamma)$ there exists $N,C>0$, for all $\lambda \in D + i(\fraka_M^*)_x^-$
\[
\int_{A_M^{M_x}U_x(\A)M_x \mo G_x(\A)} |\varphi_\lambda(h\eta)|dh \leq C \sup_{m \in \mathfrak{S}_M^1 \atop k \in K} |\varphi(mk)|\|m\|^N.
\]
\end{thm}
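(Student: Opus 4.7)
The plan follows the Lapid--Offen approach in \cite{LO}. First I would reduce to the case where $x$ is $M$-minimal. By Corollary \ref{graph.fin}, there exist $n \in G$ and a descending path $(M, x) \overset{n}{\downarrow} (M', x')$ in $\mathfrak{G}$ terminating at an $M'$-minimal $x'$. Iterating the preceding lemma on single-edge transformation of $\mathfrak{D}_x(\gamma)$ gives a clean relation between $\mathfrak{D}_x(\gamma)$ and $n^{-1}\mathfrak{D}_{x'}(\gamma)$, intersected with the half-spaces associated to the visited roots. Combined with Corollary \ref{Straightforward equation of unimodular character}, which handles the transfer of modular characters, conjugation by $n$ and the change of variable $h \mapsto n^{-1} h n$ translate $J(\varphi, x, \lambda)$ into the intertwining period for $(M', x')$ at the parameter $n\lambda$, reducing the problem to the $M$-minimal case.

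For $M$-minimal $x$, Lemma \ref{orbitana} provides the explicit structure of $\M_x$ and $\L(x)$; since $\M$ is contained in the Siegel parabolic, the central $\Sp_{2m}$ factor disappears. By Lemma \ref{lem3.19}, $H_M$ sends $\M_x(\A)$ into $(\fraka_M)_x^+$, which is orthogonal to $(\fraka_M^*)_x^-$, so $\pair{\lambda - \rho_x}{H_M(m)} = 0$ for $m \in \M_x(\A)$ and the inner integral over $[\M_x]_M$ in
\[
J(\varphi,x,\lambda) = \int_{\P_x(\A) \mo \G_x(\A)} e^{\pair{\lambda}{H_M(h\eta)}} \int_{[\M_x]_M} \delta_{P_x}^{-1}(m) e^{\pair{\rho_x}{H_M(m)}} \varphi(mh\eta)\, dm\, dh
\]
yields a well-defined function of $h$ unaffected by variation of $\lambda$ within $(\fraka_{M,\C}^*)_x^-$. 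Using the Iwasawa decomposition $\G(\A) = \P_0(\A) K$ together with the rapid-decay property of $\varphi \in \Au_P^{rd}(G)$, I would majorize $|\varphi|$ by a function of the form $\theta_f^M$ for some $f \in C_R(\fraka_0^M)$ with $R$ large, reducing the absolute convergence to a sum-over-cosets estimate of the type treated in Lemma \ref{lem2.2}. The positivity $\pair{\Re\lambda - \rho_x}{\alpha^\vee} > \gamma$ for $\alpha \in \Sigma_{P,x}$, built into the definition of $\mathfrak{D}_x(\gamma)$, supplies the exponential decay needed in the complementary unipotent directions of $\P_x \mo \G_x$.

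The main obstacle is obtaining the bound uniform in $\lambda \in D + i(\fraka_M^*)_x^-$ claimed in the theorem, rather than merely pointwise convergence for a single $\lambda$. The estimate \ref{(10)}, viewed as a majorant on the holomorphic family $\{\varphi[\lambda]\}$, controls the imaginary direction, while compactness of $D$ handles the real part. A secondary technical point is identifying a single $\gamma > 0$ that works for every $x \in N_G(M) \cap X$: this forces $\gamma$ to dominate the universal constants appearing in \eqref{5c}, \eqref{5d}, and Lemma \ref{lem2.2}, together with $\|\rho_0\|$. Once $\gamma$ is chosen large enough in terms of $\G$ alone, the majorant argument closes and yields the stated bound with seminorm $\sup_{m \in \mathfrak{S}_M^1,\, k \in K} |\varphi(mk)|\|m\|^N$.
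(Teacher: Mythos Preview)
Your reduction to the minimal case has a genuine gap. The change of variable $h \mapsto n^{-1}hn$ does \emph{not} by itself turn $J(\varphi,x,\lambda)$ into $J(\varphi',x',n\lambda)$ for any $\varphi'$ of the same type. After conjugation, the domain of integration becomes $A_{M'}^{M'_{x'}}(n\U_x n^{-1})(\A)M'_{x'}\mo \G_{x'}(\A)$, and by Lemma~\ref{graph}(1)--(3) the group $n_\alpha\U_x n_\alpha^{-1}$ sits strictly inside $\U'_{x'}$ with quotient $(\L\cap\U')(\A)$. The extra integral over this quotient is precisely the intertwining operator $M(s_\alpha,\lambda)$ acting on $\varphi$; the correct identity is $J(\varphi,x,\lambda)=J(M(s_\alpha,\lambda)\varphi,x',s_\alpha\lambda)$ (Lemma~\ref{reduction} in the paper). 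For an arbitrary $\varphi\in\Au_P^{rd}(G)$ you have no control over $M(s_\alpha,\lambda)\varphi$, so the induction does not close. The paper avoids this by reversing your order of operations: it first majorizes $|\varphi_\lambda|\ll\sup_{m,k}|\varphi(mk)|\|m\|^N(\theta_f^M)_{\Re\lambda}$ for $f=e^{-R\|\cdot\|}$ (via the lower bound $\|m\|^{-N}\ll\theta_f^M(mk)$ on the Siegel set), and then proves Proposition~\ref{convprop} for $\theta_f^M$ only. For this special integrand the Gindikin--Karpelevic formula gives $M(s_\alpha,\lambda)\theta_f^M=\theta_{f'}^{M'}$ with $f'$ again in some $C_{R_0}(\fraka_0^{M'})$, so the reduction step preserves the class of test functions.

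Your treatment of the minimal case is also too coarse. Lemma~\ref{lem2.2} bounds $\theta_f^M$ on $\mathfrak{S}_M^1$, but the integral defining $J$ runs over the noncompact quotient $\P_x(\A)\mo\G_x(\A)$, and it is not a sum over cosets of $P_0\cap M\mo M$. The paper handles this by introducing the parabolic $\Q_x$ of $\G_x$ with Levi $\L_x$: the outer integral over $\Q_x(\A)\mo\G_x(\A)$ is compact, the inner integral over $[\M_x]_M$ is bounded using Lemma~\ref{lemint} (which rests on Lemma~\ref{omer}), and the remaining integral over $\M_x(\A)\mo\L_x(\A)$ factors according to Lemma~\ref{orbitana} into pieces indexed by $i\in R$ and $i\in S$. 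The nontrivial $i\in R$ factors are handled not by any internal estimate but by Lemma~\ref{lemma27}, which cites \cite[Lemma~27]{JLR} for the convergence of $\int_{\GL_n(\A_E)\mo\GL_{2n}(\A)}e^{\langle(t,-t),H_M(l)\rangle}dl$ when $t$ is large. This external input is what fixes the threshold $\gamma$; your remarks about $\|\rho_0\|$ and the constants in \eqref{5c}--\eqref{5d} do not address it.
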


Indeed, Theorem \ref{convthm} is implied by the following proposition. We will prove the implication in this section and prove the proposition in the following sections.

\begin{prop}\label{convprop}
There exist $R$ and $\gamma$, for all $x \in N_G(M) \cap X$, the integral
\[
J(\theta_f^M,x,\lambda) = \int_{A_M^{M_x}U_x(\A)M_x \mo G_x(\A)}(\theta_f^M)_\lambda (h\eta) dh
\]
is absolutely uniformly convergent on $D+i(\fraka_M^*)_x^-$ for any compact $D$ in $\mathfrak{D}_x(\gamma)$. 
\end{prop}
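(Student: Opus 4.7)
The plan is to reduce to the case where $x$ is $M$-minimal by induction along the graph $\frakG$ of Section 3.5, and then exploit the product decomposition of $\M_x$ from Lemma \ref{orbitana}. By Corollary \ref{graph.fin} there exists $n\in G$ and a descending path $(M,x)\overset{n}{\downarrow}(M',x')$ in $\frakG$ with $M'$ standard and $x'$ an $M'$-minimal involution. Lemma \ref{graph} supplies, at each edge $(M_i,x_i)\overset{n_{\alpha_i}}{\searrow}(M_{i+1},x_{i+1})$, a matching of the unipotent quotients together with the identity $n_{\alpha_i}\rho_{x_i}=\rho_{x_{i+1}}$ and the transformation rule for the modular characters. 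Iterating these, the change of variable $h\mapsto n^{-1}hn$ will convert $J(\theta_f^M,x,\lambda)$ into $J(\theta_f^{M'},x',n\lambda)$ up to a Jacobian absorbed by the modular characters, and the lemma immediately preceding Theorem \ref{convthm} shows that $\mathfrak{D}_x(\gamma)$ is mapped to $\mathfrak{D}_{x'}(\gamma)$ under $n$. This reduces the proposition to the case where $x$ is $M$-minimal.

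In the $M$-minimal case, I would apply the Iwasawa decomposition $G_x(\A)=P_x(\A)(G_x(\A)\cap K)$, together with the right $K$-invariance of $\theta_f^M$, to rewrite $J(\theta_f^M,x,\lambda)$ as
\[
\int_{G_x(\A)\cap K}\int_{A_M^{M_x}M_x\mo M_x(\A)}\delta_{P_x}^{-1}(m)\,(\theta_f^M)_\lambda(mk\eta_x)\,dm\,dk.
\]
By Lemma \ref{orbitana} the quotient $A_M^{M_x}M_x\mo M_x(\A)$ factorizes as a product over the classical groups $\GLn{n_i}$ (for $i\in S$), $\ResGL{n_i}$ (for $i\in R$), $\ResSp{n_i}$ (for $\ell+1\le i\le k$), and $\Spn{2m}$, where the factors indexed by $i\in R$ carry a non-trivial twist by the corresponding coordinate of $\lambda-\rho_x\in(\fraka_M^*)_x^-$. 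The twisted $i\in R$ factors are Tate-type integrals, convergent on the half-plane $\langle\mathrm{Re}\,\lambda-\rho_x,\alpha_i^\vee\rangle>\gamma_0$ for a uniform $\gamma_0$, while the remaining factors contribute bounded terms; the uniform growth bound from Lemma \ref{lem2.2} then supplies the required rapid decay of $(\theta_f^M)_\lambda$ in the Siegel variable $m$. Choosing $\gamma>\gamma_0$ independently of $x$ and combining with the estimate (\ref{(10)}) for $\theta_f^M$ twisted by $\lambda$ will yield absolute convergence, uniform on compact subsets of $\mathfrak{D}_x(\gamma)+i(\fraka_M^*)_x^-$.

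The hard part will be the bookkeeping in the graph reduction: the naive change of variable $h\mapsto n^{-1}hn$ does not preserve the invariant measure on $A_M^{M_x}U_x(\A)M_x\mo G_x(\A)$, and the discrepancy must be absorbed using Lemma \ref{graph}(5) and Corollary \ref{Straightforward equation of unimodular character} after reassembling the edges of the descending path. Equally delicate is verifying that the defining hyperplanes $\langle\lambda,\alpha^\vee\rangle>\gamma$ of $\mathfrak{D}_x(\gamma)$ are mapped correctly under each elementary reflection, so that the image of a compact $D\subset\mathfrak{D}_x(\gamma)$ lands in the corresponding compact subset of $\mathfrak{D}_{x'}(\gamma)$. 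Once this bookkeeping is confirmed, the $M$-minimal case becomes a routine combination of Tate-type analysis on each $\ResGL{n_i}$-factor and the decay estimates of Lemma \ref{lem2.2}.
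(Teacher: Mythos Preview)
Your outline has the right architecture (reduce along the graph $\frakG$, then handle the minimal case via the product structure of Lemma~\ref{orbitana}), but both steps contain genuine gaps.

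\textbf{The reduction.} The change of variable $h\mapsto n_\alpha^{-1}hn_\alpha$ does not carry $J(\theta_f^M,x,\lambda)$ to $J(\theta_f^{M'},x',s_\alpha\lambda)$. By Lemma~\ref{graph}(4) the inclusion $n_\alpha\P_x n_\alpha^{-1}\subseteq\P'_{x'}$ is strict, and the missing piece $n_\alpha\U_x(\A)n_\alpha^{-1}\backslash\U'_{x'}(\A)$ is, by parts (2)--(3), identified with $(\L\cap\U')(\A)$. Integrating over this factor is exactly the intertwining operator $M(s_\alpha,\lambda)$ applied to $\theta_f^M$; the correct identity is
\[
J(\theta_f^M,x,\lambda)=J\bigl(M(s_\alpha,\lambda)\theta_f^M,\,x',\,s_\alpha\lambda\bigr),
\]
and one then needs the Gindikin--Karpelevich formula to see that $M(s_\alpha,\lambda)\theta_f^M=\theta_{f'}^{M'}$ with $f'\in C_{R'}(\fraka_0^{M'})$ for a suitable $R'<R$. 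The modular-character bookkeeping alone does not account for this unipotent integration.

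\textbf{The minimal case.} The Iwasawa decomposition $G_x(\A)=\P_x(\A)(G_x(\A)\cap K)$ is not available: $\P_x$ is \emph{not} a parabolic subgroup of $\G_x$. Indeed, from Lemma~\ref{orbitana}, for $i\in R$ the factor of $\M_x$ is $\ResGL{n_i}$ while that of $\L_x$ is $\GLn{2n_i}$, and the former is not a Levi of the latter. Consequently the quotient $A_M^{M_x}M_x\backslash\M_x(\A)=[\M_x]_M$ has finite volume and your double integral would be over a compact times a finite-volume set, giving convergence for all $\lambda$, which is false. The paper instead uses that $\Q_x$ (with $\Q$ the standard parabolic with Levi $\L(x)$) \emph{is} parabolic in $\G_x$, so $\Q_x(\A)\backslash\G_x(\A)$ is compact, and the genuine noncompact integration is over $\M_x(\A)\backslash\L_x(\A)$. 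It is on this quotient, factored over $i\in R$ into pieces of the form $\ResGL{n_i}(\A)\backslash\GLn{2n_i}(\A)$, that the Tate-type analysis (Lemma~\ref{lemma27}, following \cite{JLR}) is carried out and the half-plane condition on $\langle\lambda,\alpha_i^\vee\rangle$ arises.
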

In order to see this, we first introduce the following lemmas.
\begin{lem}\label{lowerbound}
For all $R>0$, there exists $N>0$, s.t.
\[
\|m\|^{-N} \ll \theta_f^M(mk),\ m\in\mathfrak{S}_M^1,k\in K,
\]
where $f = e^{-R\|\cdot\|} \in C_R(\fraka_0^M)$.
\end{lem}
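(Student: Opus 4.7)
The plan is to obtain the lower bound by discarding all but a single term of the defining series for $\theta_f^M$. Since $f(v) = e^{-R\|v\|}$ is nonnegative and every summand in
\[
\theta_f^M(mk) = \sum_{\gamma \in P_0 \cap M \mo M} e^{\pair{\rho_0}{H_0(\gamma mk)}} f(H_0^M(\gamma mk))
\]
is nonnegative, I keep only the trivial coset $\gamma = e$, giving
\[
\theta_f^M(mk) \geq e^{\pair{\rho_0}{H_0(mk)}} e^{-R\|H_0^M(mk)\|}.
\]

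First I would remove the $K$-factor. By the extension of $H_0$ via the Iwasawa decomposition $\G(\A) = \P_0(\A) K$, both $H_0$ and its projection $H_0^M$ are right $K$-invariant, so $H_0(mk) = H_0(m)$ and $H_0^M(mk) = H_0^M(m)$. Next I would use that $m \in \mathfrak{S}_M^1 \subset \M(\A)^1$ implies $H_M(m) = 0$, so under the decomposition $\fraka_0 = \fraka_0^M \oplus \fraka_M$ we have $H_0(m) = H_0^M(m)$; in particular $\|H_0(m)\| = \|H_0^M(m)\|$.

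Applying Cauchy--Schwarz to bound $\pair{\rho_0}{H_0(m)} \geq -\|\rho_0\|\,\|H_0(m)\|$, the single-term lower bound becomes
\[
\theta_f^M(mk) \geq e^{-(\|\rho_0\| + R)\,\|H_0(m)\|}.
\]
Finally I would invoke inequality (\ref{5c}), namely $\|H_0(m)\| \ll 1 + \log\|m\|$ valid for all $m \in \G(\A)$ (in particular for $m \in \mathfrak{S}_M^1$), to convert this into a polynomial lower bound. Choosing the implicit constant $C$ from (\ref{5c}) and setting $N = C(\|\rho_0\| + R)$, one obtains
\[
\theta_f^M(mk) \geq e^{-(\|\rho_0\| + R) C} \cdot \|m\|^{-N},
\]
which is precisely the asserted estimate $\|m\|^{-N} \ll \theta_f^M(mk)$.

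There is no genuine obstacle here; the lemma is essentially a one-term lower bound together with the standard comparison between $\|H_0\|$ and $\log\|\cdot\|$. The only thing to be careful about is correctly identifying $H_0^M(m)$ with $H_0(m)$ on $\M(\A)^1$, which relies on the split exact sequence (\ref{liealg}) applied to the pair $M \subset G$ and the definition of $\M(\A)^1$ as $\ker H_M$.
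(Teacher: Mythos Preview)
Your proposal is correct and follows essentially the same approach as the paper: keep only the $\gamma=e$ term, use right $K$-invariance, bound $\pair{\rho_0}{H_0(m)}$ via Cauchy--Schwarz, and then apply (\ref{5c}) to convert the exponential in $\|H_0(m)\|$ into a power of $\|m\|$. You are slightly more explicit than the paper in justifying $H_0(m)=H_0^M(m)$ for $m\in\M(\A)^1$, and your observation that (\ref{5c}) alone suffices is correct (the paper's final citation of (\ref{5d}) is superfluous).
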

\begin{proof}
By the right $K$-invariance
\[
\theta_f^M(mk) = \theta_f^M(m)=\sum_{\gamma \in P_0 \cap M \mo M} e^{\langle \rho_0,H_0(\gamma m)\rangle}f(H_0^M(\gamma m)).
\]
Note that the series is positive. By taking only the term when $\gamma = e$ in the series
\[
\theta_f^M(mk) \geqslant e^{\langle \rho_0,H_0(m)\rangle}f(H_0^M(m)).
\]
By (\ref{5c}),
\[
\theta_f^M(mk) \geqslant e^{(-\|\rho_0\|-R)\|H_0(m)\|} \gg e^{(-\|\rho_0\|-R)(1+\log\|m\|)}.
\]
By (\ref{5d}),
\[
\theta_f^M(mk) \gg \|m\|^{-N}
\]
for $N = \|\rho_0\|+R$.
\end{proof}

\begin{lem} 
For any $R>0$ and $\varphi \in \mathcal{A}_P^{rd}(G)$, there exists $N$, for all $g \in G(\A)$
\[
\varphi(g) \ll \sup_{m\in \mathfrak{S}_M^1 \atop k \in K} |\varphi(mk)|||m||^N|\theta_f^M(g)|,
\]
where $f = e^{-R\|\cdot\|}$.
\end{lem}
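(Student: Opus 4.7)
The plan is to establish the pointwise bound by reducing an arbitrary $g$ to a Siegel-set representative and then invoking the lower bound from Lemma~\ref{lowerbound}. Concretely, I would first decompose any $g \in \G(\A)$ as
\[
g \;=\; a\,\gamma\,m_0\,u\,k,\qquad a\in A_M,\ \gamma\in M,\ m_0\in\mathfrak{S}_M^1,\ u\in\U(\A),\ k\in K,
\]
using in turn the Iwasawa decomposition $\G(\A)=\P(\A)K$, the Levi decomposition $\P=\M\ltimes\U$, the factorization $\M(\A)=A_M\,\M(\A)^1$, and the reduction $\M(\A)^1 = M\cdot\mathfrak{S}_M^1$.

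Next, I would observe that both $\varphi$ and $\theta_f^M$ are left $\U(\A)M$-invariant and share the semi-invariance $F(ag)=e^{\pair{\rho_P}{H_0(a)}}F(g)$ under $a\in A_M$. For $\varphi$ these properties are built into the definition of $\Au_P^{mg}(G)$. For $\theta_f^M$ the three properties listed just after (\ref{theta}) give right $K$-invariance and the $A_M$-semi-invariance directly; the left $M$-invariance follows by reindexing the sum over $P_0\cap M\backslash M$; and the left $\U(\A)$-invariance follows because $M$ normalizes $\U$ and $\U\subseteq N_0$, so for $u\in\U(\A)$ one has $\gamma u g = (\gamma u\gamma^{-1})\gamma g$ with $\gamma u\gamma^{-1}\in\U\subseteq N_0$, and hence $H_0(\gamma u g)=H_0(\gamma g)$ and similarly for $H_0^M$. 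Combined with right $K$-invariance of $\theta_f^M$, the decomposition above yields
\begin{align*}
\varphi(g) &= e^{\pair{\rho_P}{H_0(a)}}\,\varphi(m_0 k),\\
\theta_f^M(g) &= e^{\pair{\rho_P}{H_0(a)}}\,\theta_f^M(m_0).
\end{align*}

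Finally, since $f=e^{-R\|\cdot\|}$ lies in $C_R(\fraka_0^M)$, Lemma~\ref{lowerbound} furnishes $N=N(R)$ with $\|m_0\|^{-N}\ll \theta_f^M(m_0)$ for all $m_0\in\mathfrak{S}_M^1$. Taking the ratio of the two displayed identities, the common exponential factors cancel and
\[
|\varphi(g)| \;=\; |\theta_f^M(g)|\cdot\frac{|\varphi(m_0 k)|}{|\theta_f^M(m_0)|} \;\ll\; |\theta_f^M(g)|\cdot \|m_0\|^{N}|\varphi(m_0 k)| \;\le\; \sup_{m\in\mathfrak{S}_M^1,\,k\in K}|\varphi(mk)|\,\|m\|^{N}\,|\theta_f^M(g)|,
\]
which is the asserted bound; the supremum is finite since $\varphi\in\Au_P^{rd}(G)$. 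The whole argument is largely bookkeeping: the only step requiring genuine care is the verification of the left $\U(\A)$-invariance of $\theta_f^M$, which rests on the nesting $\U\subseteq N_0$ together with the normalization of $\U$ by $M$.
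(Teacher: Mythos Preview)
Your proposal is correct and follows essentially the same approach as the paper: Iwasawa--Levi decompose $g$, use the shared left $\U(\A)M$-invariance and $A_M$-semi-invariance of $\varphi$ and $\theta_f^M$ to cancel the exponential factor, and then invoke Lemma~\ref{lowerbound}. If anything your write-up is slightly cleaner, since you reduce explicitly to $m_0\in\mathfrak{S}_M^1$ before applying Lemma~\ref{lowerbound}, whereas the paper writes $m\in\M(\A)^1$ and only afterwards appeals to $M$-invariance to pass to the Siegel domain.
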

\begin{proof}

Write $g = uamk$ where $u \in U(\A), a \in A_M, m \in M(\A)^1, k\in K$ by Iwazawa decomposition. Note that if a function $\varphi$ on $\U(\A)M \mo \G(\A)$ satisfies $\varphi(ag) = e^{\langle\rho_P,H_0(a)\rangle}\varphi(g)$, we have
\begin{equation}\label{twist}
\varphi(g) = e^{\langle\rho_P,H_0(a)\rangle}\varphi(mk).
\end{equation}

Apply (\ref{twist}) to $\varphi$,
\[
|\varphi(g)| = e^{\langle\rho_P,H_0(a)\rangle}|\varphi(mk)|.
\]

Let $N$ be as in Lemma \ref{lowerbound},
\[
|\varphi(g)| \ll e^{\langle\rho_P,H_0(a)\rangle}|\varphi(mk)|\|m\|^N |\theta_f^M(mk)|.
\]

Apply (\ref{twist}) to $\theta_f^M$. The right hand side of the above formula equals to $|\varphi(mk)|\|m\|^N |\theta_f^M(g)|$.
By the $M$-invariance, this is bounded by the supremum when $m$ runs over the Siegel domain $\mathfrak{S}_M^1$. Hence
\[
|\varphi(g)| \leq \sup_{m\in M(\A)^1 \atop k \in K}|\varphi(mk)|\|m\|^N|\theta_f^M(g)|.
\]
\end{proof}

For any $\lambda \in \fraka_M^*$, we also have
\[
|\varphi_\lambda(g)| \ll \sup_{m\in \mathfrak{S}_M^1 \atop k \in K}|\varphi(mk)|\|m\|^N|(\theta_f^M)_\lambda(g)|.
\]

Now we can explain the reduction to the proposition. Let $M$ and $x$ be as above, and $\varphi \in \Au_P^{rd}(G)$. Let $\gamma$ and $R$ be as in Proposition \ref{convprop}, then for all $\lambda \in \mathfrak{D}_x(\gamma) + i(\fraka_M^*)_x^-$:
\begin{align*}
\int_{A_M^{M_x}\U_x(\A)M_x \mo \G_x(\A)} |\varphi_\lambda(h\eta)|dh
&\ll \int_{A_M^{M_x}\U_x(\A)M_x \mo \G_x(\A)}  \sup_{m \in \mathfrak{S}_M^1 \atop k \in K} |\varphi(mk)|\|m\|^N |(\theta_f^M)_\lambda(h\eta)|dh\\
&\ll \int_{A_M^{M_x}\U_x(\A)M_x \mo \G_x(\A)}|(\theta_f^M)_\lambda(h\eta)|dh,
\end{align*}
where the last integral is convergent.

\subsection{Proof of the Convergence of the Intertwining Period}
\subsubsection{Lemmas}
In this sections, we introduce some lemmas related to the proof of the convergence of the intertwining integral.




Recall the function
\[
\theta_f^M(g) = \sum_{\gamma \in P_0 \cap M \mo M} e^{\pair{\rho_0}{H_0(\gamma g)}} f(H_0^M(\gamma g)), g \in \G(\A)
\]
defined in Section 2.6.
We have the following property of the function.
\begin{lem}
There exists $R>0$, such that the function $\theta_f^M$ is convergent and bounded on $\M(\A)^1$.
\end{lem}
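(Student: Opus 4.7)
The plan is to read this off Lemma \ref{lem2.2} plus the $M(F)$-invariance built into the definition of $\theta_f^M$.

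First, I would recall that for any $m_0 \in M$ the substitution $\gamma \mapsto \gamma m_0$ is a bijection of $P_0 \cap M \mo M$, so the defining series
\[
\theta_f^M(g) = \sum_{\gamma \in P_0 \cap M \mo M} e^{\pair{\rho_0}{H_0(\gamma g)}} f(H_0^M(\gamma g))
\]
is left $M$-invariant in $g$. Since $\mathfrak{S}_M^1$ is a Siegel domain for $M \mo \M(\A)^1$, we have $\M(\A)^1 = M \cdot \mathfrak{S}_M^1$, and it therefore suffices to show that the series converges for $m \in \mathfrak{S}_M^1$ and is bounded there.

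Second, I would invoke Lemma \ref{lem2.2} with the auxiliary parameter $g$ taken to be the neutral element and with $N = 0$. That lemma produces an $R_0 > 0$ such that whenever $R > R_0$ and $f \in C_R(\fraka_0^M)$, the sum defining $\theta_f^M$ is absolutely convergent and
\[
\sup_{m \in \mathfrak{S}_M^1} |\theta_f^M(m)| \ll_f 1,
\]
which is exactly what is needed on the Siegel domain.

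Combining the two steps, for any $m' \in \M(\A)^1$ we can write $m' = \gamma m$ with $\gamma \in M$ and $m \in \mathfrak{S}_M^1$, and left $M$-invariance gives $\theta_f^M(m') = \theta_f^M(m)$, which is bounded uniformly in $m'$ by the supremum above. Thus the same $R$ from Lemma \ref{lem2.2} works here. There is no real obstacle: the statement is essentially a specialization of Lemma \ref{lem2.2} (at $g = e$, $N = 0$) packaged with the trivial $M(F)$-invariance of $\theta_f^M$; the only thing one must not forget is that the Siegel domain $\mathfrak{S}_M^1$ already covers $\M(\A)^1$ under left $M$-translation, so the bound on the Siegel domain automatically upgrades to a bound on all of $\M(\A)^1$.
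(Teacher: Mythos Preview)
Your proposal is correct and matches the paper's approach: reduce to $\mathfrak{S}_M^1$ via left $M$-invariance and apply Lemma~\ref{lem2.2} at $g=e$. One small point: Lemma~\ref{lem2.2} is stated only for $N>0$, so rather than setting $N=0$ you should take, say, $N=1$ and use (\ref{5a}) (the uniform lower bound $1\ll\|m\|$) to absorb the factor $\|m\|$, which is exactly what the paper does.
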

\begin{proof}
Note that $\|g\|$ has a lower bound when $g$ runs over the whole group $\G(\A)$, according to (\ref{5a}). Hence
\[
\sup_{m \in \mathfrak{S}_M^1} |\theta_f^M(m)| \leq C \sup_{m \in \mathfrak{S}_M^1} |\theta_f^M(m)|\|m\|.
\]
By Lemma \ref{lem2.2}, the above is bounded by
\[
C\|1\|^{N^\prime} = C.
\]
where $N^\prime$ is some integer coming from the lemma.
\end{proof}

\begin{lem}\label{omer}
Let $f$ be such that $\theta_f^M$ convergent. We have the following inequality:
\[
\sup_{m \in \mathfrak{S}_M^1}|\theta(mg)| \ll e^{\langle \rho_P,H_0(g) \rangle} \text{ for any } g \in \G(\A).
\]
\end{lem}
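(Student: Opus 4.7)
The plan is to reduce the estimate to the boundedness statement from the previous lemma by means of the Iwasawa decomposition, after carefully tracking how $\theta_f^M$ transforms under $\U(\A)$, $A_M$, $M$ and $K$. First, write $g = u\, a\, m_0\, k$ with $u \in \U(\A)$, $a \in A_M$, $m_0 \in \M(\A)^1$ and $k \in K$; this exists because $\G(\A) = \U(\A)M(\A)K$ and $\M(\A) = A_M \M(\A)^1$.

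Next I would push the factor $m \in \mathfrak{S}_M^1$ through the Iwasawa pieces of $g$, using the defining properties of $\theta_f^M$ listed in Section~\ref{secfunc}. Since $mum^{-1} \in \U(\A)$ and $a$ lies in the center, we get
\[
\theta_f^M(mg) \;=\; \theta_f^M\bigl((mum^{-1})\, a\, m m_0 \,k\bigr) \;=\; e^{\pair{\rho_P}{H_0(a)}}\,\theta_f^M(m m_0 k),
\]
using successively left $\U(\A)$-invariance, property (3) of $\theta_f^M$, and right $K$-invariance. Since $mm_0 \in \M(\A)^1$, I can choose $\gamma \in M$ and $m' \in \mathfrak{S}_M^1$ with $mm_0 = \gamma m'$, and by left $M$-invariance conclude that $\theta_f^M(mm_0 k) = \theta_f^M(m')$. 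Taking supremum over $m \in \mathfrak{S}_M^1$ is then dominated by $\sup_{m' \in \mathfrak{S}_M^1} |\theta_f^M(m')|$, which is finite by the previous lemma.

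To finish I need to identify $\pair{\rho_P}{H_0(a)}$ with $\pair{\rho_P}{H_0(g)}$. Since $H_0$ is left $\U_0(\A)$ and right $K$-invariant, $H_0(g) = H_0(a) + H_0(m_0)$; and since $m_0 \in \M(\A)^1$, its image $H_0(m_0)$ lies in $\fraka_0^M$, on which $\rho_P \in \fraka_M^*$ vanishes. Hence $\pair{\rho_P}{H_0(a)} = \pair{\rho_P}{H_0(g)}$, and combining the bounds yields the claim.

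I do not expect a real obstacle here; the only mildly delicate point is checking that the Siegel reduction $mm_0 = \gamma m'$ does not spoil the uniformity, but this is routine because the boundedness of $\theta_f^M$ on the Siegel domain from the previous lemma is already uniform, and the resulting constant is independent of $g$. Everything else is a careful bookkeeping of the transformation properties of $\theta_f^M$.
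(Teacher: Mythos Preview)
Your argument is correct and follows essentially the same route as the paper: Iwasawa-decompose $g = uam_0k$, use left $\U(\A)$-invariance, the $A_M$-transformation property, and right $K$-invariance of $\theta_f^M$ to reduce to boundedness on $\M(\A)^1$, and then observe $\langle\rho_P,H_0(g)\rangle=\langle\rho_P,H_0(a)\rangle$. The only superfluous step is the Siegel reduction $mm_0=\gamma m'$: the previous lemma already gives boundedness of $\theta_f^M$ on all of $\M(\A)^1$, so you can bound $|\theta_f^M(mm_0)|$ directly without passing to $\mathfrak{S}_M^1$.
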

\begin{proof}
Recall from Section 2.6
that $\theta_f^M$ has the following properties:
\begin{enumerate}
    \item[(i)] $\theta_f^M(ag) = e^{\langle \rho_P, H_0(a) \rangle} \theta_f^M(g)$ for any $a \in A_M, g\in \G(\A)$;
    \item[(ii)] $\theta_f^M$ is right-$K$ invariant.
\end{enumerate}

Let $g = uam_1k$ where $u \in \U(\A)$, $a \in A_M$, $m_1 \in \M(\A)^1$ and $k \in K$. Then
\begin{align*}
 \theta(mg) = \theta(muam_1k) = \theta(au^\prime m m_1 k) = e^{\langle \rho_P,H_0(a) \rangle} \theta(mm_1),
\end{align*}
where $u^\prime$ is such that $u^\prime m = mu$.
Note that $H_0(g) = H_0(a)$. By the previous lemma, we have
\begin{align*}
|\theta(mg)| \ll e^{\langle \rho_P,H_0(a) \rangle}\\
= e^{\langle \rho_P,H_0(g) \rangle},
\end{align*}
for all $m \in \M(\A)^1$ and $g \in \G(\A)$. The lemma is shown by having $m$ run over $\mathfrak{S}_M^1$.
\end{proof}



In the rest of this section we let $\G = \Res_{E/F}\GL_{2n}$ and $\M = \Res_{E/F} \GL_n \times \Res_{E/F}\GL_n$. We let the maximal compact group $K$ of $\G(\A)$ be the standard one. The map $H_M$ is defined as that in Section 2.2.
We further let 
$$\eta = \begin{pmatrix} I_n&iI_n\\I_n&-iI_n \end{pmatrix}$$ and 
$$x = \eta \cdot e = \begin{pmatrix} 0&I_n\\I_n&0 \end{pmatrix}.$$
We consider the pair $(\L_x,\M_x)$ and have the following lemma.
\begin{lem}\label{lemma27}
There exists $t_0$, for all compact subset $K_1$ in $(t_0, \infty)$, 
\[
\int_{\M_x(\A)\mo \L_x(\A)} e^{\langle(t,-t), H_M(l) \rangle} dl \ll_{K_1} 1, \text{ for all $l \in \L_x(\A)$}.
\]
\end{lem}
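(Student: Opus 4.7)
The plan is to reduce the integral to an explicit integral on $M_n(\A_E)$ via a big-cell parametrization of $\M_x(\A)\backslash\L_x(\A)$, and then establish convergence by a direct place-by-place estimate analogous to the one carried out by Lapid and Offen.

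First I would pin down the subgroups: with $\eta = \begin{pmatrix} I_n & iI_n \\ I_n & -iI_n\end{pmatrix}$ and $x = \eta\cdot e = \begin{pmatrix} 0 & I_n \\ I_n & 0\end{pmatrix}$, one verifies directly that $\L_x(\A) = \eta\,\GLn{2n}(\A_F)\,\eta^{-1}$ while $\M_x(\A) = \{\diag(g,\bar g) : g\in\GLn{n}(\A_E)\}$. In particular $\L_x(\A)\cap\P(\A)=\M_x(\A)$, so the integrand $e^{\langle(t,-t),H_M(\cdot)\rangle}$ descends to $\M_x(\A)\backslash\L_x(\A)$.

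Next, on the open locus where the upper-left block $\alpha$ is invertible, every $l = \begin{pmatrix}\alpha&\beta\\\bar\beta&\bar\alpha\end{pmatrix}\in \L_x(\A)$ admits the factorization $l = \diag(\alpha,\bar\alpha)\cdot l_Z$ with $l_Z = \begin{pmatrix} I & Z\\\bar Z & I\end{pmatrix}$ and $Z = \alpha^{-1}\beta$. A change-of-variables computation in Haar measure shows that the quotient measure on $\M_x(\A)\backslash\L_x(\A)$ pulls back, via $Z\mapsto[l_Z]$, to a constant multiple of $dZ/|\det(I-\bar Z Z)|_{\A_E}^n$ on $M_n(\A_E)$. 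For the integrand, $H_M(l_Z)$ is computed at each place via the Iwasawa decomposition of $l_Z$ in $\G$: using Gram--Schmidt at the archimedean places and elementary row reduction at non-archimedean places, one obtains $e^{\langle(t,-t),H_M(l_Z)\rangle}$ as a product over places of ratios of the form $|\det(I-\bar Z_v Z_v)|_v^{2t}/\Phi_v(Z_v)^{2t}$, for explicit positive functions $\Phi_v\geq 1$ of $Z_v$.

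Multiplying the integrand by the Jacobian on $M_n(\A_E)$, one obtains a nonnegative function whose integrability is controlled by two local features: near the singular locus $\det(I-\bar Z Z)=0$ the combined exponent of $|\det(I-\bar Z Z)|$ is $2t$ minus a constant determined by $n$, integrable across the codimension-one locus precisely when $t$ exceeds some explicit $t_0>0$; and as $\|Z\|\to\infty$ the polynomial decay of the integrand exceeds the polynomial growth of the ambient measure in $2n^2$ real dimensions. Both estimates are uniform in $t$ on any compact subset $K_1\subset(t_0,\infty)$, yielding the stated bound. The main technical obstacle is computing $H_M(l_Z)$ place-by-place and tracking the correct exponent of $|\det(I-\bar Z Z)|$: although the factorization $l_Z = \bar u_{\bar Z}\cdot p_Z$ with $p_Z = \begin{pmatrix} I & Z\\ 0 & I-\bar Z Z\end{pmatrix} \in\P(\A)$ isolates the non-parabolic part $\bar u_{\bar Z}$, the function $H_M$ is not additive across this factorization, and the normalization of $|\cdot|_{\A_E}$ varies across split, inert, and ramified places of $E/F$.
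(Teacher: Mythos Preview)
Your plan is valid in outline, but it follows a much longer route than the paper's. The paper's proof is essentially a change of variable: translate and conjugate by $\eta$ to carry the pair $(\L_x(\A),\M_x(\A))$ isomorphically onto $(\GL_{2n}(\A_F),\GL_n(\A_E))$, with $\GL_n(\A_E)$ embedded in $\GL_{2n}(\A_F)$ by restriction of scalars, and then invoke Lemma~27 of \cite{JLR}, which establishes the convergence of exactly this integral in the Galois setting. Your big-cell parametrization and place-by-place estimate amount to re-proving that lemma from scratch in the present coordinates.

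The paper's route is short and clean but hides the threshold $t_0$ inside the citation; your direct approach would be self-contained and make $t_0$ explicit, at the cost of the genuine local work you correctly flag (the factorization $l_Z=\bar u_{\bar Z}\,p_Z$ does not compute $H_M(l_Z)$, and getting the correct exponent of $|\det(I-\bar Z Z)|$ across split, inert, ramified, and archimedean places requires care). If you wish to carry your strategy through, it is cleaner to first conjugate by $\eta$ so that the quotient is literally $\GL_n(\A_E)\backslash\GL_{2n}(\A_F)$; the local analysis you then need is precisely the one already carried out in \cite{JLR}.
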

\begin{proof}
Apply the conjugation of $\eta$ to the variable in the integral:
\begin{align*}
    \int_{\M_x(\A)\mo \L_x(\A)} e^{\langle(t,-t), H_M(l) \rangle} dl &= \int_{\M_x(\A)\mo \L_x(\A)} e^{\langle(t,-t), H_M(l\eta) \rangle} dl\\
 &=\int_{\eta^{-1}\M_x(\A)\eta \mo \eta^{-1}\L_x(\A)\eta} e^{\langle(t,-t), H_M(\eta l) \rangle} dl.
 \end{align*}
The convergence follows from Lemma 27 in \cite{JLR}.
\end{proof}

\vskip 1cm 

\subsubsection{The Minimal Case}
\indent As we mentioned, it suffices to show Theorem \ref{convthm} by proving Proposition \ref{convprop}.
In the rest of this section, we will prove Proposition \ref{convprop} for the case when $x$ is $M$-minimal. It suffices to show that for $f = e^{-R\|\cdot\|}$ and $\lambda$ real. The function $(\theta_f^M)_\lambda$ is positive in this case. We will first show for the case when $x$ is $M$-minimal and then reduce all the cases to this case.

Let $x$ be $M$-minimal and $\Q = \L \ltimes \V$ be the standard parabolic subgroup containing $\L = \L(x)$. We write the intertwining period $J(\theta_f^M,x,\lambda)$ as
\begin{align*}
\int_{\Q_x(\A)\mo \G_x(\A)} &\int_{\P_x(\A)\mo \Q_x(\A)} \delta_{Q_x}^{-1}(q) e^{\langle \lambda,H_P(qh\eta) \rangle}\\
&\times \int_{[\M_x]_M} \delta_{P_x}^{-1}(m) e^{\langle \rho_x,H_P(m) \rangle} \theta_f^M(mqh\eta) dm~dq~dh.
\end{align*}
Since $\Q_x$ is a parabolic subgroup of $\G_x$, we have $\Q_x(\A) \mo \G_x(\A)$ is compact. Let $K_0$ be a compact subset of $\G_x(\A)$ such that $\Q_x(\A) K_0 = \G_x(\A)$. It suffices to show that there exist $R>0$ and $\gamma>0$, for all compact $D \subset \mathfrak{D}_x(\gamma)$ and compact $K_0 \subset \G_x(\A)$, for all $\lambda$ with $\mathrm{Re} \lambda \in D$ and $h \in K_0$, and for $f = e^{-R\|\cdot\|}$:
\begin{align*}
\int_{\P_x(\A)\mo \Q_x(\A)} &\delta_{Q_x}^{-1}(q) e^{\langle \lambda,H_P(qh\eta) \rangle}\\
&\times \int_{[\M_x]_M} \delta_{P_x}^{-1}(m) e^{\langle \rho_x,H_P(m) \rangle} \theta_f^M(mqh\eta) dm~dq \ll_{D,K_0} 1.
\end{align*}
According to the isomorphism $\M_x(\A)\mo \L_x(\A) \overset{\sim}{\rightarrow} \P_x(\A)\mo \Q_x(\A)$ induced by the inclusion, we take representatives in $\L_x(\A)$ in the above integral. Hence we can transform the above integral as:
\begin{align}
&\int_{\M_x(\A)\mo \L_x(\A)} e^{\langle \lambda - \rho_x ,H_P(lh\eta) \rangle} \label{int1}\\
&\times \delta_{Q_x}^{-1}(l) e^{\langle \rho_x,H_P(lh\eta) \rangle} \int_{[\M_x]_M} \delta_{P_x}^{-1}(m) e^{\langle \rho_x,H_P(m) \rangle} \theta_f^M(mlh\eta) dm~dl. \label{int2}
\end{align}
In order that the above outer integral makes sense, the integrand must be left $\M_x(\A)$-invariant. Moreover, we note that the integrand of the outer integral in both rows (\ref{int1}) and (\ref{int2}) are left $\M_x(\A)$-invariant. Moreover, the integrand of the outer integral in the second row (\ref{int2}) admits a $\M_x(\A)$-invarint upper bound. We see this by showing the following lemma.
\begin{lem}\label{lemint}
There exists $R>0$, for any compact subset $K_0 \subset \G(\A)$ and any $h \in K_0$:
\[
\int_{[\M_x]_M} \delta_{P_x}^{-1}(m) e^{\langle \rho_x,H_P(m) \rangle} \theta_f^M(mlh\eta) dm \ll_{K_0} e^{\langle \rho_P,H_M(l) \rangle}.
\]
\end{lem}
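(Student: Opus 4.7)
The plan is to combine a Siegel reduction in $\M_x$ with the pointwise bound of Lemma \ref{omer}. The integrand $\delta_{P_x}^{-1}(m)\,e^{\langle \rho_x, H_P(m)\rangle}\,\theta_f^M(m l h \eta)$ is left-$M_x(F)$-invariant: the characters $\delta_{P_x}$ and $e^{\langle \rho_x, H_P(\cdot)\rangle}$, being rational characters evaluated adelically, are trivial on $M_x(F)$ by the product formula, while $\theta_f^M$ is left-$M(F)$-invariant by construction. Hence we may restrict the integration in $[\M_x]_M$ to a fundamental domain sitting inside a Siegel domain $\mathfrak{S}_{M_x}$ for $\M_x$, taken modulo the central subgroup $A_M^{M_x} \mo A_{M_x}$.

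First I would analyze the exponents. By Lemma \ref{lem3.19}, $H_M(m) \in (\fraka_M)_x^+$ for $m \in \M_x(\A)$, and both $H_M$ and $H_{P_x}$ vanish on $\M_x(\A)^1$; consequently $\delta_{P_x}^{-1}(m)$ and $e^{\langle \rho_x, H_P(m)\rangle}$ are identically $1$ on the Siegel part of $\M_x(\A)^1$. On the central quotient $A_M^{M_x} \mo A_{M_x}$ one invokes the structure of $\M_x$ from Lemma \ref{orbitana}. In the $M$-cuspidal case a block-by-block check shows that the split center of each $\GLn{n_i}$-factor for $i \in S$ and of the diagonal $\ResGL{n_i}$ for $i \in R$ already lies in $A_M$, so $A_M^{M_x} = A_{M_x}$ and $[\M_x]_M \simeq M_x \mo \M_x(\A)^1$ is of finite volume. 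In the non-cuspidal case the residual central integration comes from the $R$-blocks of the shape $\ResGL{n_i} \hookrightarrow (\ResGL{n_i})^2$ and is handled by Lemma \ref{lemma27}.

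Second I would apply Lemma \ref{omer}. After enlarging the Siegel set if necessary, $\mathfrak{S}_{M_x}^1 \subseteq \mathfrak{S}_M^1$ (using $\M_x(\A)^1 \subseteq \M(\A)^1$), and therefore
\[
|\theta_f^M(m l h \eta)| \ll e^{\langle \rho_P,\, H_0(l h \eta) \rangle}
\]
for $m$ in this domain. Since $h$ varies in the compact set $K_0$ and $\eta$ is fixed, one has $H_0(l h \eta) = H_M(l) + O(1)$ uniformly, so the right-hand side is $\ll_{K_0} e^{\langle \rho_P, H_M(l) \rangle}$. Integrating this bound over the finite-volume fundamental domain then yields the lemma. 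The main obstacle I anticipate is the non-$M$-cuspidal case: verifying that the characters $\delta_{P_x}^{-1} \cdot e^{\langle \rho_x, H_P(\cdot)\rangle}$ assemble on the central direction $A_M^{M_x} \mo A_{M_x}$ into exactly the character for which Lemma \ref{lemma27} delivers a uniform bound. This requires careful bookkeeping using the identification of $(\fraka_M)_x^-$ recalled in the discussion preceding Proposition \ref{convprop}, and using Corollary \ref{Straightforward equation of unimodular character} to transport $\rho_x$ through the graph $\mathfrak{G}$ down to the minimal vertex.
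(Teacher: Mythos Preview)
Your core argument---the characters $\delta_{P_x}^{-1}$ and $e^{\langle\rho_x,H_P(\cdot)\rangle}$ are trivial on $\M_x(\A)^1$, the quotient $[\M_x]_M$ has finite volume, and then Lemma~\ref{omer} bounds $\sup_{m\in\mathfrak{S}_M^1}\theta_f^M(mlh\eta)$ by $e^{\langle\rho_P,H_M(lh\eta)\rangle}\ll_{K_0}e^{\langle\rho_P,H_M(l)\rangle}$---is exactly the paper's proof.

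Where you diverge is in the anticipated ``non-$M$-cuspidal'' complication, and this is misplaced on two counts. First, Lemma~\ref{lemint} lives inside the minimal-case subsection: $x$ is already $M$-minimal, and the reduction of the general case to the minimal one via the graph $\mathfrak{G}$ (Corollary~\ref{Straightforward equation of unimodular character}, Corollary~\ref{graph.fin}) happens only \emph{after} this lemma, so that machinery is irrelevant here. Second, even for $M$-minimal but non-$M$-cuspidal $x$, one still has $A_M^{M_x}=A_{M_x}$: beyond the $S$- and $R$-blocks you already checked, the extra blocks are of type $(\ResGL{n_i},\ResSp{n_i})$ or $(\Un{2m},\Spn{2m})$, for which both $A_M^{M_{i,x}}$ and $A_{M_{i,x}}$ are trivial. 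Thus $[\M_x]_M\simeq M_x\backslash\M_x(\A)^1$ has finite volume outright, with no ``residual central integration'' to control. Lemma~\ref{lemma27} plays no role in this lemma; the paper invokes it only afterwards, for the \emph{outer} integral over $\M_x(\A)\backslash\L_x(\A)$, which is a different quotient. Your anticipated main obstacle is therefore a non-issue, and the proof is shorter than you expect.
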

\begin{proof}
According to Lemma \ref{orbitana}, the pair $(\M, \M_x)$ is a product of pairs of the following types:
\begin{enumerate}
 \item[(i)] $(\mathbf{Res_{E/F}} \GL_n, \GL_n)$;
 \item[(ii)] $(\mathbf{Res_{E/F}} \GL_n \times \mathbf{Res_{E/F}} \GL_n, \mathbf{Res_{E/F}} \GL_n)$.
\end{enumerate}
In particular, we have $A_M^{M_x} = A_{M_x}$ and hence $[\M_x]_M$ is of finite volume. Note that $[\M_x]_M$ is isomorphic to $M_x\mo \M_x(\A)^1$. Consequently, we have the upper bound of the inner integral:
\begin{align*}
\int_{M_x\mo \M_x(\A)^1} \delta_{P_x}^{-1}(m) e^{\langle \rho_x,H_P(m) \rangle} \theta_f^M(mlh\eta) dm
=& \int_{M_x\mo \M_x(\A)^1} \theta_f^M(mlh\eta) dm\\
\ll & \sup_{m \in \mathfrak{S}_M^1} \theta_f^M(mlh\eta).
\end{align*}
Further apply Lemma \ref{omer} to $\theta_f^M$:
\begin{align*}
\sup_{m \in \mathfrak{S}_M^1} \theta_f^M(mlh\eta)
\leq  e^{\langle \rho_P, H_M(lh\eta) \rangle} \ll_{K_0}  e^{\langle \rho_P, H_M(l) \rangle}.
\end{align*}
\end{proof}
With the lemma, it is immediate that the integrand in (\ref{int2}) is bounded by
\[
\delta_{Q_x}^{-1}(l)
e^{\langle \rho_P+\rho_x,H_M(l) \rangle}
\]
which is $\M_x(\A)$-invariant.

Since $h$ runs over a compact set, the integrand $e^{\langle \lambda - \rho_x, H_P(lh\eta) \rangle}$ in (\ref{int1}) is bounded by a constant multiple of $e^{\langle \lambda - \rho_x, H_P(l)\rangle}$. It is only left to show that there exists $\gamma>0$, for any compact $D \subset \mathfrak{D}_x(\gamma)$, for any $\lambda \in \rho_x + (\fraka_{M,\C}^*)_x^-$ where $\mathrm{Re} \lambda \in D$:
\begin{equation}\label{LxMx}
\int_{\M_x(\A) \mo \L_x(\A)} \delta_{Q_x}^{-1}(l) e^{\langle \rho_P + \rho_x, H_M(l) \rangle} e^{\langle \lambda - \rho_x, H_M(l) \rangle} dl \ll_D 1.
\end{equation}
Recall the index sets $S$ and $R$ in Section 3.4
and note that $(\M_x(\A),\L_x(\A))$ is a product, running over the index $i$, of pairs of the following two types under the usual setting:
\begin{enumerate}
    \item When $i \in R$,
    \[
    \M_{i,x}(\A) =
   \left\{
\begin{pmatrix} \alpha & 0 \\ 0 & \bar{\alpha} \end{pmatrix} \mid \alpha \in \GL_{n_i}(\A_E)
\right\}
\]
and
\[
\L_{i,x}(\A) = 
        \left\{
\begin{pmatrix} \alpha & \beta \\\bar{\beta} & \bar{\alpha} \end{pmatrix} \in \GL_{2n_i}(\A_E) \mid \alpha,\beta \in \mathbf{Mat}_{n_i\times n_i}(\A_E)
\right\},
\]
   \item When $i \in S$,
   \[
   \M_{i,x}(\A) = \L_{i,x}(\A) = \GL_{n_i}(\A).
   \]
\end{enumerate}
The integral (\ref{LxMx}) decomposes into a product of integrals over the factors and hence it suffices to show the inequality for each of the factors $(\M_{i,x},\L_{i,x})$. As the second type is trivial, it suffices to show for each $i \in R$ of the following inequality:
\begin{equation}\label{LixMix}
\int_{\M_{i,x}(\A) \mo \L_{i,x}(\A)} \delta_{Q_x}^{-1}(l_i) e^{\langle \rho_P + \rho_x, H_M(l_i) \rangle} e^{\langle \lambda - \rho_x, H_M(l_i) \rangle} dl_i \ll_D 1,
\end{equation}
where we abused $l_i$ and $(0,\ldots,0,l_i,0,\ldots,0)$ in the above .

Note that the map $l \mapsto \eta^{-1} l \eta$ gives an isomorphism from $L_{i,x}(\A)$ to $\GL_{2n_i}(\A)$, where $\eta = \begin{pmatrix} I & iI\\ I & -iI \end{pmatrix}$. Hence 
\[
\delta_{Q_x}(l_i) = |\det(l_i)|^{\sigma_i} = e^{\langle \mu_i, H_M(l_i) \rangle}
\]
for some $\mu_i \in \fraka_M^*$. Indeed $\mu_i = (0,\ldots,0,\sigma_i,\sigma_i,0,\ldots,0) \in \fraka_M$. 
Since
$$\delta_{Q_x}(l) e^{\langle \rho_P+\rho_x, H_M(l) \rangle} = e^{\langle \mu_i+\rho_P+\rho_x, H_M(l) \rangle}$$ 
is $M_x(\A)$-invariant, we have that $\mathrm{pr}_{M_i\times M_{i+1}}(\mu_i + \rho_P + \rho_x)$ lies in $\fraka_{M_i}^{L_i}$. By Lemma \ref{lemma27},
the inequality (\ref{LixMix}) holds when $\langle \lambda, \alpha_i^\vee \rangle > \gamma_i$. Let $\gamma = \max_{i \in R}(\gamma_i)$. The convergence holds with $R$ and $\gamma$ for the $M$-minimal case.


\vskip 1cm 

\subsubsection{The General Case}
In order to prove for the general case when $x$ is not supposed to be $M$-minimal, it suffices to prove the following lemma.
\begin{lem}\label{reduction}
Suppose that $(M,x)$ and $(M^\prime,x^\prime)$ are vertices in $\mathfrak{G}$ and $(M,x) \overset{n_\alpha}{\searrow} (M^\prime,x^\prime)$ for some $\alpha \in \Delta_P$. Assume that  Proposition \ref{convprop} holds for $(M^\prime,x^\prime)$. Then it also holds for $(M,x)$. Moreover, there exists $\gamma > 0$ and $R>0$ such that for $\mathrm{Re} \lambda \in \mathfrak{D}_x(\gamma)$ and $f \in C_R(\fraka_0^M)$ we have
\[
J(\theta_f^M,x,\lambda) = J(M(s_\alpha,\lambda)\theta_f^M,x^\prime,s_\alpha \lambda).
\]
\end{lem}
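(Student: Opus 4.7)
The plan is to unfold $J(\theta_f^M,x,\lambda)$ by conjugating the integration variable by $n_\alpha$, fibrate the resulting coset space using the short exact sequence in Lemma \ref{graph}(2), and identify the fibre integral with that defining the intertwining operator $M(s_\alpha,\lambda)$. The induction along descending edges of $\mathfrak{G}$ then furnishes both the convergence and the identity.

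Concretely, I would first fix $\eta_x\in G(F)$ with $\eta_x\cdot e=x$ and take $\eta_{x'}=n_\alpha\eta_x$, so that $\eta_{x'}\cdot e=x'$. The substitution $h'=n_\alpha h n_\alpha^{-1}$ identifies the quotient $A_M^{M_x}U_x(\A)M_x\mo G_x(\A)$ with $A_{M'}^{M'_{x'}}(n_\alpha U_x n_\alpha^{-1})(\A)M'_{x'}\mo G_{x'}(\A)$ and sends $h\eta_x$ to $n_\alpha^{-1}h'\eta_{x'}$. Using parts (2)--(4) of Lemma \ref{graph}, the resulting integration factors as an outer integral over $A_{M'}^{M'_{x'}}U'_{x'}(\A)M'_{x'}\mo G_{x'}(\A)$ and an inner integral over $(n_\alpha U_x n_\alpha^{-1})(\A)\mo U'_{x'}(\A)$, the latter being identified via $\pr_L$ with $(L\cap U')(\A)$. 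The inner integral is by definition $(M(s_\alpha,\lambda)\theta_f^M)_{s_\alpha\lambda}$ evaluated at $h'\eta_{x'}$, while Lemma \ref{graph}(5) together with Corollary \ref{Straightforward equation of unimodular character} translates $\rho_x$ into $\rho_{x'}$ and transports the weight $\lambda-\rho_x$ to $s_\alpha\lambda-\rho_{x'}$. Putting these ingredients together yields the asserted identity.

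For the convergence we select $\gamma$ and $R$ so that (a) the root-subgroup integral defining $M(s_\alpha,\lambda)\theta_f^M$ converges absolutely when $\pair{\mathrm{Re}\,\lambda}{\alpha^\vee}>\gamma$, and (b) the inductive hypothesis of Proposition \ref{convprop} applied at $(M',x')$ delivers absolute convergence of the outer integral whenever $s_\alpha\,\mathrm{Re}\,\lambda\in\mathfrak{D}_{x'}(\gamma)$. By the lemma preceding the statement, these two constraints are jointly equivalent to $\mathrm{Re}\,\lambda\in\mathfrak{D}_x(\gamma)$, so Proposition \ref{convprop} for $(M,x)$ follows. The principal obstacle is the Fubini step, which requires absolute convergence of the double integral on a compact subdomain of $\mathfrak{D}_x(\gamma)+i(\fraka_M^*)_x^-$. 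The safest route is to replace $f$ by the positive majorant $e^{-R\|\cdot\|}$ and $\lambda$ by its real part, since then every integrand is non-negative and iterated absolute convergence from (a) and (b) forces joint absolute convergence; Fubini then legitimises the manipulation for general $f\in C_R(\fraka_0^M)$. A secondary technical point, worth flagging, is that $M(s_\alpha,\lambda)\theta_f^M$ is not itself of the form $\theta_{f'}^{M'}$, so the inductive hypothesis must be invoked through a dominating estimate of the form $(\theta_{f'}^{M'})_{s_\alpha\lambda}$ rather than by direct substitution.
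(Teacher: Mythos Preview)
Your overall architecture matches the paper's: conjugate by $n_\alpha$, use Lemma~\ref{graph}(2)--(4) to fibrate the coset space, recognise the fibre integral as the intertwining operator $M(s_\alpha,\lambda)$, and justify Fubini through positivity by working with real $\lambda$ and a positive majorant. The paper happens to run the manipulation in the opposite direction---starting from the triple integral expression of $J(M(s_\alpha,\lambda)\theta_f^M,x',s_\alpha\lambda)$, which is already known to converge, and collapsing it to $J(\theta_f^M,x,\lambda)$---but that is only a matter of bookkeeping.

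The one substantive discrepancy is your last paragraph. You assert that $M(s_\alpha,\lambda)\theta_f^M$ is \emph{not} of the form $\theta_{f'}^{M'}$ and propose an unspecified ``dominating estimate'' in its place. In fact the paper's key device here is precisely that it \emph{is} of that form: by the Gindikin--Karpelevic formula (stated just before the proof), for $f=e^{-R\sqrt{1+\|\cdot\|^2}}$ one has $M(s_\alpha,\lambda)\theta_f^M=\theta_{f'}^{M'}$ with $\hat{f'}(s_\alpha\mu)=c_{s_\alpha}(\lambda+\mu)\hat f(\mu)$, and Lemma~\ref{pw} then places $f'\in C_{R'}(\fraka_0^{M'})$ for any $R'<R$ once $\pair{\mathrm{Re}\,\lambda}{\alpha^\vee}$ is large enough. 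This allows the inductive hypothesis for $(M',x')$ to be invoked \emph{directly} at the pair $(R',\gamma')$, with no auxiliary domination needed; the general $f\in C_R(\fraka_0^M)$ is then handled at the very end by pointwise comparison with $e^{-R\sqrt{1+\|\cdot\|^2}}$. Your proposed workaround would, if carried out, have to reconstruct exactly this $f'$, so you are not missing an idea so much as misidentifying one you already need.
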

Since every vertex $(M,x)$ reduces to a minimal pair $(M^\prime,x^\prime)$ in finite steps by Theorem \ref{graph.fin}, the minimal case and the above lemma imply the general case.

To show the lemma, assume for Proposition \ref{convprop} holds for $(M^\prime, x^\prime)$ with $R^\prime>0$ and $\gamma^\prime>0$. Let $(M,x)$ be a vertex such that $(M,x) \overset{n_\alpha}{\searrow} (M^\prime,x^\prime)$.

We first show that there exists $R,\gamma$, for any $f = e^{-R\sqrt{1+\|\cdot\|^2}} \in C_R(\fraka_0^M)$, for any $\lambda \in \mathfrak{D}_x(\gamma)$, $J(M(s_\alpha,\lambda)\theta_f^M,x^\prime,s_\alpha\lambda)$ is convergent. We recall the following formula, which can be derived via local computation.

\begin{lem}[Gindikin-Karpelevic Formula]
Let $f = e^{-R\sqrt{1+\|\cdot\|^2}}$ where $\theta_f^M$ is convergent. For any $\alpha \in \Delta_P$, and $\gamma$ large enough so that $\mathrm{Re}\langle \lambda + \mu,\beta^\vee \rangle$ large enough for all $\beta \in \Sigma_{B,s_\alpha}$ so that $M(s_\alpha,\lambda)$ is defined, we have
\[
M(s_\alpha,\lambda)\theta_f^M = \theta_{f^\prime}^{M^\prime},
\]
where $f^\prime$ is the function on $\fraka_0^{M^\prime}$ such that $\hat{f}^\prime(s_\alpha \mu) = c_{s_\alpha}(\lambda + \mu)\hat{f}(\mu)$, and
\[
c_w(\nu) = \prod_{\beta \in \Sigma_{B,w}} \frac{\zeta_F^*(\langle \nu,\beta^\vee\rangle)}{\zeta_F^*(\langle \nu,\beta^\vee\rangle+1)},
\]
where $\zeta_F^*(s)$ is the completed Dedekind zeta function of $F$.
\end{lem}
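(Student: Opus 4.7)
The plan is to reduce the computation to the classical rank-one Gindikin--Karpelevic identity by expanding $\theta_f^M$ via Mellin inversion, applying $M(s_\alpha,\lambda)$ inside the integral, and then repackaging the result as $\theta_{f'}^{M'}$.

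First I would use Lemma \ref{pw} and the fact that $f = e^{-R\sqrt{1+\|\cdot\|^2}}$ has Fourier transform $\hat f$ belonging to $\bigcap_{r<R}P^r((\fraka_0^M)^*)$. By Mellin/Fourier inversion in the variable $H_0^M(\gamma g)$, for a suitable contour $\{\mathrm{Re}\,\mu=\mu_0\}$ with $\|\mu_0\|<R$ one can write
\[
\theta_f^M(g) \;=\; \int_{\mathrm{Re}\,\mu=\mu_0} \hat f(\mu)\, E_0^M(g,\mu)\, d\mu,
\]
where $E_0^M(g,\mu) = \sum_{\gamma\in P_0\cap M\mo M} e^{\langle \rho_0+\mu, H_0(\gamma g)\rangle}$ is the (unramified) minimal parabolic Eisenstein series on $M$ induced up to $G$ via $P$. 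The rapid decay of $\hat f$ together with Lemma \ref{lem2.2} justifies the absolute convergence of the double integral.

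Second, I would apply the intertwining operator $M(s_\alpha,\lambda)$ to both sides. Since $s_\alpha$ is a simple reflection (with respect to the minimal Levi inside $M$-viewed-in-$G$ side), it acts on the parameter along a single root direction, and one is reduced to a rank-one computation factoring through each place. At each place $v$, the local intertwining operator acting on the spherical vector in the principal series of parameter $\lambda+\mu$ produces the scalar
\[
\prod_{\beta\in\Sigma_{B,s_\alpha}} \frac{\zeta_{F_v}(\langle \lambda+\mu,\beta^\vee\rangle)}{\zeta_{F_v}(\langle \lambda+\mu,\beta^\vee\rangle+1)},
\]
which is the classical Gindikin--Karpelevic local identity (cf.\ Langlands' formula for intertwining operators acting on spherical sections). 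Taking the product over all places $v$ assembles the local ratios into the completed global factor $c_{s_\alpha}(\lambda+\mu)$, and transports $E_0^M(\cdot,\mu)$ to $E_0^{M'}(\cdot,s_\alpha\mu)$. Thus
\[
M(s_\alpha,\lambda)\,\theta_f^M(g) \;=\; \int_{\mathrm{Re}\,\mu=\mu_0} c_{s_\alpha}(\lambda+\mu)\,\hat f(\mu)\, E_0^{M'}(g, s_\alpha \mu)\, d\mu.
\]
The exchange of $M(s_\alpha,\lambda)$ with the $\mu$-integral is legitimate on the region of convergence guaranteed by the hypothesis that $\gamma$ is large enough (so that the contour can be chosen inside the region of holomorphy of every $c_{s_\alpha,v}$ and of $M(s_\alpha,\lambda)$ simultaneously).

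Finally I would change variables $\mu\mapsto s_\alpha\mu$ (using that $d\mu$ and $\|\mu\|$ are $W$-invariant by our choice of Euclidean structure) to rewrite the integral as
\[
\int_{\mathrm{Re}\,\mu = s_\alpha\mu_0} c_{s_\alpha}(\lambda + \mu)\,\hat f(\mu)\, E_0^{M'}(g,\mu)\, d\mu,
\]
and define $f'$ on $\fraka_0^{M'}$ by declaring $\hat f'(s_\alpha\mu) = c_{s_\alpha}(\lambda+\mu)\hat f(\mu)$. Then Mellin inversion read backwards identifies this integral with $\theta_{f'}^{M'}(g)$, provided one checks that $f'\in C_{R'}(\fraka_0^{M'})$ for some $R'>0$ so that $\theta_{f'}^{M'}$ itself converges. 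This is where I expect the main technical obstacle: one must show that the ratio $c_{s_\alpha}(\lambda+\mu)$, which is meromorphic, does not destroy the Paley--Wiener decay of $\hat f$ on a slightly smaller tube. The point is that on the region $\mathrm{Re}\,\langle\lambda+\mu,\beta^\vee\rangle$ large (guaranteed by $\lambda\in\mathfrak{D}_x(\gamma)$ for $\gamma$ sufficiently large), each ratio $\zeta_F^*(z)/\zeta_F^*(z+1)$ is holomorphic and polynomially bounded on vertical strips, so Lemma \ref{pw} applies to $\hat f'$ and produces $f'\in C_{R'}(\fraka_0^{M'})$ as required. Choosing $R$ and $\gamma$ so that all these tube conditions hold simultaneously completes the proof.
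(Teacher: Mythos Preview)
Your proposal is correct and is precisely the ``local computation'' the paper alludes to; the paper does not actually supply a proof of this lemma beyond the sentence ``We recall the following formula, which can be derived via local computation,'' so your Mellin-inversion unfolding together with the classical rank-one Gindikin--Karpelevic identity on spherical vectors is exactly what is intended. One small slip: after the change of variables $\mu\mapsto s_\alpha\mu$ the integrand should read $c_{s_\alpha}(\lambda+s_\alpha\mu)\,\hat f(s_\alpha\mu)\,E_0^{M'}(g,\mu)$, not $c_{s_\alpha}(\lambda+\mu)\,\hat f(\mu)\,E_0^{M'}(g,\mu)$; your subsequent definition $\hat f'(s_\alpha\mu)=c_{s_\alpha}(\lambda+\mu)\hat f(\mu)$ is nonetheless the right one and the argument goes through.
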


Note that By Lemma \ref{pw}, for any $R_0 < R$ there exists $\gamma_0$, such that $f^\prime \in C_{R_0}(\fraka_0^{M^\prime})$ for all $\lambda \in \mathfrak{D}_x(\gamma_0)$. Let $R$ be such that $R > R^\prime$, by the above argument, for $R^\prime < R$, we can let $\gamma = max(\gamma_0,\gamma^\prime)$ so that $f^\prime \in C_{R_0}(\fraka_0^M)$ for all $\mathrm{Re}\lambda \in \mathfrak{D}_x(\gamma)$. Under the choice of $R$ and $\gamma$,
\[
J(M(s_\alpha,\lambda)\theta_f^M,x^\prime,s_\alpha\lambda) =J(\theta_{f^\prime}^{M^\prime},x^\prime,s_\alpha\lambda)
\]
is convergent, since $f^\prime \in C_{R^\prime}(\fraka_0^{M^\prime})$ and
$\mathrm{Re}s_\alpha\lambda \in \mathfrak{D}_{x^\prime}(\gamma^\prime)$.

We now prove that
\[
J(M(s_\alpha,\lambda)\theta_f^M,x^\prime,s_\alpha\lambda) = J(\theta_f^M,x,\lambda).
\]
Let $\Q = \L \ltimes \V$ be the parabolic subgroup of $\G$ containing $\P$ such that $\Delta_P^Q = \{\alpha\}$. Let $\eta^\prime = n_\alpha \eta$ and note that $U^\prime \cap s_\alpha U s_\alpha^{-1} = V$. We have
\begin{equation*}
J(M(s_\alpha,\lambda)\theta_f^M,x^\prime,s_\alpha\lambda) = \int_{\P^\prime_{x^\prime}(\A) \mo \G_{x^\prime}(\A)}\int_{[\M^\prime_{x^\prime}]_{M^\prime}}\int_{\V(\A) \mo \U^\prime(\A)} (\theta_f^M)_\lambda (n_\alpha^{-1} umh\eta^\prime)du \delta_{P^\prime_{x^\prime}}(m)^{-1} dmdh.
\end{equation*}
Since the triple integral is convergent and the integrand is nonnegative, it is absolutely convergent. By a change of variable $u \mapsto mum^{-1}$ and reorder the integrals, we have
\[
\int_{\P^\prime_{x^\prime}(\A) \mo \G_{x^\prime}(\A)}
\int_{\V(\A) \mo \U^\prime(\A)}
\int_{[\M^\prime_{x^\prime}]_{M^\prime}} (\theta_f^M)_\lambda (n_\alpha^{-1} muh\eta^\prime)\delta_{P^\prime_{x^\prime}}(n_\alpha mn_\alpha^{-1})^{-1} dmdudh.
\]
By Lemma \ref{graph} (3) and (4), the above integral equals
\begin{align*}
   & \int_{\P^\prime_{x^\prime}(\A) \mo \G_{x^\prime}(\A)}
\int_{n_\alpha\U_x n_\alpha^{-1}(\A) \mo \U^\prime_{x^\prime}(\A)}
\int_{[\M^\prime_{x^\prime}]_{M^\prime}} (\theta_f^M)_\lambda (n_\alpha^{-1} muh\eta^\prime)\delta_{P^\prime_{x^\prime}}(n_\alpha mn_\alpha^{-1})^{-1} dmdudh\\
=&\int_{(n_\alpha \P^\prime_{x^\prime} n_\alpha^{-1})(\A) \mo \G_{x^\prime}(\A)}
\int_{[\M^\prime_{x^\prime}]_{M^\prime}} (\theta_f^M)_\lambda (n_\alpha^{-1} mh\eta^\prime)\delta_{P^\prime_{x^\prime}}(n_\alpha mn_\alpha^{-1})^{-1} dmdudh.
\end{align*}
By the changes of variables $h \mapsto n_\alpha h n_\alpha^{-1}$ and $m \mapsto n_\alpha m n_\alpha^{-1}$, this is exactly $J(\theta_f^M,x,\lambda)$.

To obtain the proof for all $f \in C_R(\fraka_0^M)$, we simply note that every $f \in C_R(\fraka_0^M)$ is bounded by a constant multiple of $e^{-R\sqrt{1+\|\cdot\|^2}}$.

\subsection{Period Integrals of Pseudo-Eisenstein Series}
Let $\P = \M \ltimes \U$ be a standard parabolic subgroup of $\G$.

\begin{thm}\label{main theorem}
There exists $R>0$ such that the period integral
\[
\int_{[H]} \theta_\phi (h) dh
\]
converges absolutely for any $\phi \in C_R(\U(\A)M \mo \G(\A))$ and vanishes when $\M$ is not contained in the Siegel parabolic subgroup. Moreover, there exists $\gamma > 0$ and $R > 0$ such that for any $\phi \in C_R^\infty(\U(\A)M\mo \G(\A))$ and any collection $\{\lambda_x\}$ where $\lambda_x \in \mathfrak{D}_x(\gamma)$ with $\|\lambda_x \| \leq R$ and where $x$ runs over the finite set of $M$-cuspidal orbits in $N_G(M) \cap X / M$:
\[
\int_{[H]} \theta_\phi (h) dh = \sum_x \int_{\lambda_x + i(\fraka_M^*)_x^-} J(\phi[\lambda],x,\lambda) d\lambda.
\]
\end{thm}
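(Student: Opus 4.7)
The plan is to unfold the period integral via the double coset decomposition $P\mo G/H$, use Corollary \ref{cor4.5} to kill all but the $M$-cuspidal orbits, and then apply Mellin inversion to convert the remaining contributions into contour integrals of intertwining periods. By Lemma \ref{lem2.3}, the series defining $\theta_\phi$ converges absolutely and uniformly for $R$ sufficiently large, so Fubini legitimizes term-by-term integration over $[H]$. Lemma \ref{X(U_2n,theta) is single G orbit.} identifies $G/H$ with $X$ via $\gamma H \mapsto \gamma \cdot e$, so $P\mo G/H$ corresponds to $P$-orbits on $X$. Choosing representatives $\eta_x$ with $\eta_x \cdot e = x$ and using $\eta_x H \eta_x^{-1} = \G_x$, standard unfolding yields
\[
\int_{[H]} \theta_\phi(h)\, dh = \sum_{[x]_P \in P \mo X} \int_{A_G^H \P_x(\A) \mo \G_x(\A)} \int_{P_x \mo \P_x(\A)} \phi(p h \eta_x)\, dp\, dh,
\]
and the same absolute-convergence bound gives the first assertion.

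Next I decompose according to Lemma \ref{lem3.8}. If $x$ is not $M$-admissible, then $\pr_M(\P_x)$ is a proper parabolic of $\M$ with nontrivial unipotent radical $\U(x)$, and Lemma \ref{lem3.8}(3) applied to $f = \phi(\cdot g)$ (cuspidal along $M$, hence in particular along $\U(x)$) forces the inner integral to vanish. By Lemma \ref{lem3.10}, the surviving orbits are in bijection with $M$-orbits on $N_G(M) \cap X$, and for such $x$ Lemma \ref{lem3.9} gives $\P_x = \M_x \ltimes \U_x$, reducing each contribution to
\[
\int_{A_M^{M_x} \U_x(\A) M_x \mo G_x(\A)} \Bigl( \int_{[\M_x]_M} \delta_{P_x}^{-1}(m)\, \phi(m h \eta_x)\, dm \Bigr)\, dh.
\]
By Lemma \ref{orbitana}, whenever $x$ is not $M$-cuspidal the pair $(\M, \M_x)$ contains a direct factor conjugate to $(\U_{2m}, \Sp_{2m})$ with $m \geq 1$, so Corollary \ref{cor4.5} annihilates the inner integral. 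When $\M$ is not contained in the Siegel parabolic, no $M$-cuspidal orbit can exist (since the central $\U_{2r}$-block with $r \geq 1$ survives in $\L(x)$), and this yields the second assertion.

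For the surviving $M$-cuspidal terms, I insert the Mellin inversion $\phi(g) = \int_{\lambda_0 + i(\fraka_M^G)^*} \phi[\lambda]_\lambda(g)\, d\lambda$ from Section \ref{secfunc} and interchange orders, justified by the Paley--Wiener bound \eqref{(10)} combined with the uniform majorization from Theorem \ref{convthm}. The $A_M^{M_x}$-quotient interacts with the Mellin kernel through Lemma \ref{lem3.19}: since $H_M$ identifies $A_M^{M_x}$ with $(\fraka_M)_x^+$, integration against $e^{\pair{\lambda}{H_M(\cdot)}}$ along this subgroup pins the $(\fraka_M^*)_x^+$-component of $\lambda$ to $\rho_x$, leaving the free contour $\lambda_x + i(\fraka_M^*)_x^-$. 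What remains inside is, by construction, precisely the intertwining period $J(\phi[\lambda], x, \lambda)$, and Theorem \ref{convthm} permits shifting the contour into any compact subset of $\mathfrak{D}_x(\gamma)$ with $\|\lambda_x\| \leq R$. The main obstacle I anticipate is this last step: the Mellin contour $(\fraka_M^G)^*$ does not split along the eigenspace decomposition $(\fraka_M^*)_x^+ \oplus (\fraka_M^*)_x^-$, so one must carefully track how the $A_M^{M_x}$-average collapses the $+$-direction while leaving the $-$-direction free, and verify that Fubini applies uniformly on compacta, using the decay provided jointly by the Paley--Wiener estimate and Theorem \ref{convthm}.
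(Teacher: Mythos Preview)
Your proposal is correct and follows essentially the same route as the paper: unfold via $P\mo G/H \cong P\mo X$, kill non-admissible orbits by Lemma \ref{lem3.8}(3), reduce to $N_G(M)\cap X/M$ by Lemma \ref{lem3.10}, kill non-$M$-cuspidal orbits by Corollary \ref{cor4.5}, and then handle the remaining terms by Mellin inversion combined with Lemma \ref{lem3.19} and Theorem \ref{convthm}. The only cosmetic difference is that the paper performs the $A_M^{M_x}$-integration first and recognizes it as the partial Fourier transform $\mathcal{F}^+=(\mathcal{F}^-)^{-1}\circ\mathcal{F}$, which immediately produces the contour $\lambda_x+i(\fraka_M^*)_x^-$ and sidesteps the splitting concern you flagged; your ``insert full Mellin, then collapse the $+$-direction'' is the same computation in the other order.
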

\begin{proof}
 The convergence of the period integral of the pseudo-Eisenstein series 
 \[
 \int_{[H]} \theta_\phi (h) dh
 \]
 for large enough $R$ follows from Lemma (\ref{lem2.3}) directly.

For any $x \in N_G(M) \cap X$ and $\phi$ we define
$$I_x(\phi) = \int_{P_x \mo G_x(\A)} \phi(h\eta_x) dh.$$ Note that the convergence of the integral is a result given in \cite{AGR}.
By unfolding the integral and reinpreting the orbits where the sum runs over, we have
\begin{equation}\label{steps}
\int_{[H]} \theta_\phi (h) dh = \sum_{x \in N_G(M) \cap X/M} I_x(\phi).
\end{equation}

Indeed, unfolding the integral, we have
\begin{align*}
\int_{H \mo H(\A)} \sum_{\gamma \in P \mo G} \phi(\gamma h) dh &= \sum_{\gamma \in P \mo G} \int_{H \mo H(\A)} \phi(\gamma h) dh\\
&= \sum_{\eta \in P \mo G /H} \sum_{\delta \in H \cap \eta^{-1}P\eta \mo H} \int_{H \mo H(\A)} \phi(\eta \delta h) dh\\
&= \sum_{\eta \in P \mo G /H}  \int_{H \cap \eta^{-1}P\eta \mo H(\A)} \phi(\eta h) dh\\
&= \sum_{x \in P\mo X} \int_{P_x \mo G_x(\A)} \phi(h \eta_x) dh
\end{align*}
where $\eta_x \in G$ is such that $x = \eta \bar{\eta}^{-1}$.
Further, note that
\begin{align}\label{eq}
I_x(\phi) = \int_{P_x(\A) \mo G_x(\A)} \int_{P_x \mo P_x(\A)} \phi(ph\eta_x) \delta_{P_x}(p)^{-1} dp dh.
\end{align}
by Lemma \ref{lem3.8} (3) and the cuspidality condition on $\phi$, $I_x(\phi) = 0$ unless $x$ is $M$-admissible. By Lemma \ref{lem3.10}, the sum runs over $N_G(M) \cap X / M$.

For any $x \in N_G(M) \cap X$ which is not $M$-cuspidal, we have
\begin{align*}
I_x(\phi) = 0.
\end{align*}
Indeed, by Lemma \ref{lem3.9} and the $\U(\A)$-invariance of $\phi$, the inner integral in (\ref{eq}) transforms to
\begin{align*}
&\int_{P_x \mo \P_x(\A)} \phi(ph\eta_x) \delta_{P_x}(p)^{-1} dp \\
=& \int_{M_x \mo \M_x(\A)} \int_{U_x \mo \U_x(\A)} \phi(umh\eta_x) \delta_{P_x}(um)^{-1} du dm\\
=& \int_{M_x \mo \M_x(\A)} \phi(mh\eta_x) \delta_{P_x}(m)^{-1} dm,
\end{align*}
where we normalize the measure of $U_x\mo \U_x(\A)$ to be $1$. Note that $\phi(\cdot g)$ is cuspidal on $M \mo \M(\A)$ and the character $\delta_{P_x}$ is trivial on $\M_x(\A)^1$. By Corollary \ref{cor4.5}
\begin{align*}
&\int_{M_x \mo \M_x(\A)} \phi(mh\eta_x) \delta_{P_x}(m)^{-1} dm \\
=& 
\int_{A_{M_x}}\int_{M_x \mo \M_x(\A)^1} \phi(mah\eta_x) dm \delta_{P_x}(a)^{-1} da\\
=& 0
\end{align*}
for all $x \in N_G(M) \cap X$ which is not $M$-cuspidal. In particular,
\[
\int_{[\H]} \theta_\phi(h) dh = 0
\]
when $\M$ is not contained in the Siegel parabolic subgroup. Assume therefore that $M$ is contained in the Siegel parabolic subgroup and $x$ is $M$-cuspidal.
By Lemma \ref{lem3.9} and Lemma \ref{lem3.19}, we have
\begin{align*}
I_x(\phi) &= \int_{A_M^{M_x}U_x(\A)M_x\mo G_x(\A)} \int_{A_M^{M_x}} \phi(ah\eta_x) \delta_{P_x}^{-1}(a) da dh\\
&= \int_{A_M^{M_x}U_x(\A)M_x\mo G_x(\A)} \int_{(\fraka_M)_x^+} \phi(e^\nu h\eta_x) e^{-\langle \rho_P + \rho_x, \nu \rangle} d\nu dh.
\end{align*}

Now assume further that $\phi$ smooth, i.e. $\phi \in C_R^\infty(\U(\A)M \mo \G(\A))$. The inner integral of the above is a Fourier transform $\mathcal{F}^+$ on $(\fraka_M)_x^+$, which is also $(\mathcal{F}^-)^{-1} \circ \mathcal{F}$, where $\mathcal{F}$ is the Fourier transform on $\fraka_M$ and $\mathcal{F}^-$ is the Fourier transform on $\fraka_M^-$.Hence
\begin{align}\label{changeorder}
I_x(\phi) = \int_{A_M^{M_x}U_x(\A)M_x\mo G_x(\A)} \int_{\lambda_x + i(\fraka_M^*)_x^-} \phi[\lambda]_\lambda(h\eta_x) d\lambda dh
\end{align}
for any $\lambda_x \in \rho_x + (\fraka_{M,\mathbb{C}}^*)_x^-$ such that $\|\lambda\| < R$.

By Theorem \ref{convthm}, the double integral converges for $\mathrm{Re} \lambda_x \in \mathfrak{D}_x(\gamma)$ and suitable $R$ and $\gamma$. By Changing the order of the integrals in (\ref{changeorder}) we obtain the theorem.
\end{proof}


\section{The Spectrum}
\subsection{The $H$-distinguish Spectrum}
We will define a subspace of $L^2([\G])$, denoted $L^2_{\hdist}([\G])$, which measures the part of the spectrum which is distinguished with respect to $H$. First, let $L^2_{\hconv}([\G])$ be the subspace of $L^2([\G])$ consisting of $\varphi$ such that the integral $\int_{[\H]_G}|f * \varphi(h)| dh$ converges for any $f \in C_c(\G(\A))$(where the latter denotes the space of continuous, compactly supported functions on $\G(\A)$). Note that, except for trivial cases, $[\H]_G$ is of measure zero in $[\G]$--hence the need for convolution.
The space $L^2([\G])_{\hconv}$ contains the space of rapidly decreasing functions on $[\G]$ (cf. [\cite{AGR}, Proposition 1]). (If $\H \cap \G^{\mathrm{der}}$ is semisimple, then $L^2([\G])_{\hconv}$ contains in fact the space of bounded measurable functions on $[\G]$.) In particular, $L^2_{\hconv}([\G])$ is dense in $L^2([\G])$. Let
\[
L^2_{\hconv}([\G])^\circ = \{\varphi \in L^2_{\hconv}([\G]) \mid \int_{[H]_G} (f*\varphi)(h) dh = 0 \text{ for all } f \in C_c(\G(\A))\}.
\]
We can define the “strong” $H$-distinguished spectrum $L^2_{\hdist}([\G])^{st}$ to be the orthogonal complement in $L^2([\G])$ of $L^2([\G])_{\hconv}^\circ$. 

More generally, for any subspace $\mathcal{C}$ of $L^2([\G])_{\hconv}$ define
\[
\mathcal{C}_H^\circ = \{\phi \in \mathcal{C} \mid \int_{[\H]_G} |(f*\phi)(h)| dh = 0 \text{ for all } f \in C_c(\G(\A)\}.
\]

Note that if, for any $\phi \in \mathcal{C}$, $\phi$ is continuous and the integral $\int_{[\H]_G}|\phi(hg)|dh$ converges for all $g \in \G(\A)$, then
\[
\mathcal{C}_H^\circ = \{\phi \in \mathcal{C} \mid \phi(hg)=0 \text{ for all } g\in \G(\A)\}.
\]
To define $L^2([\G])_{H-\text{dist}}$ we will take $\mathcal{C}$ to be the space of pseudo-Eisenstein series. To make this more precise we recall some standard facts and terminology from \cite{MW}.

\subsection{Coarse Decomposition}
Let $\operatorname{\Pi_{cusp}}(A_G\backslash\G(\A))$ be the set of equivalent classes of irreducible cuspidal representation of $\G(\A)$ with trivial central character on $A_G$. A cuspidal datum of $G$ is a pair $(M,\pi)$, where $M$ is a Levi subgroup of $G$ and $\pi\in\picusp(A_M\backslash\M(\A))$. We say two cuspidal data $(M,\pi)$, $(M',\pi')$ are equivalent if there exist $\gamma\in G$ such that $\gamma M \gamma^{-1}=M'$ and $\gamma \pi=\pi'$. And we denote $(M,\pi)\sim(M',\pi')$ if $\gamma(M,\pi)=(M',\pi')$. By Bruhat decomposition, if such $\gamma$ exist, we can suppose that $\gamma$ is an element of the Weyl group $W$, and we define 
$$W((M,\pi),(M',\pi')):=\{w\in W:w(M,\pi)=(M',\pi')\}.$$

For any cuspidal data $(M,\pi)$ let $L^2_{\text{cusp},\pi}([\M])$ be the $\pi$-isotypic component of the cuspidal spectrum $L^2_{\text{cusp}}([\M])$, and let
\begin{align*}
L^2_{\text{cusp},\pi}&(\U(\A)M\mo \G(\A))\\
= &\{\varphi : \U(\A)M \mo \G(\A) \rightarrow \C \text{ measurable} \mid \\
&\delta_P^{-\frac{1}{2}}([\M])
\text{ for all }g \in \G(\A), \int_{A_M\U(\A)M \mo \G(\A)}|\varphi(g)|^2 dg < \infty\}.
\end{align*}
For any finite set of $K$-types $\mathfrak{F}$, the space $L^2_{\text{cusp},\pi}(\U(\A) M \mo \G(\A))^\mathfrak{F}$ (the direct sum of $K$-isotypic components pertaining to $\mathfrak{F}$) is finite-dimensional and consists of smooth functions.

Let $\mathfrak{X} \in \mathfrak{E}$ and $\frakF$ be a finite set of $K$-types. Given $R\gg 1$ and $(M,\pi)\in \frakX$, recall the function space $P^{R,\frakF}_{(M,\pi)}$ defined in \cite[II.1]{MW}:
\begin{align*}
    P^{R,\frakF}_{(M,\pi)}=P^R((\fraka^G_M)^*;L^2_{\cusp,\pi}(\UAMG)^\frakF)\cong P^R((\fraka^G_M)^*)\otimes L^2_{\cusp,\pi}(\UAMG)^\frakF).
\end{align*}
We denote the value of $\phi\in\PRFMpi$ at $\lambda$ by $\phi[\lambda]$. This notation is consistent with the one in section 2.6 where we view $\phi$ as a function in the space $C^\infty_R (\UAMG).$ 
Recalled the pseudo-Eisenstein series $\theta_\phi$ (see \cite[II.1.10]{MW}) is defined for all $\phi\in \PRFMpi$. Let 
$$\PRFX:=\bigoplus_{(M,\pi)\in\frakX}\PRFMpi.$$
We can extend $\theta_\phi$ to the map:
\begin{align*}
    \theta^\frakX: \PRFX&\to L^2\BG\\
    \sum_{i=1}^n c_i\phi_i &\mapsto \sum_{i=1}^n c_i\theta_{\phi_i}.
\end{align*}

Let $\frakP_{\frakX,\frakF}\BG$ be the image of $\theta^\frakX$, and let $L^2_\frakX \BG_\frakF$ be the closure of $\frakP_{\frakX,\frakF}$ in $L^2\BG$. Let $\frakP_\frakX\BG:=\bigcup_{\frakF\in \hat{K}\; \text{finite}}\frakP_{\frakX,\frakF}\BG$, and let $L^2_\frakX \BG$ be the closure of $\frakP_{\frakX}$ in $L^2\BG$. By \cite[Theorem II.2.1]{MW}, $L^2_\frakX \BG$ is independent of choice of $R$ for $R>0$.

We have the coarse decomposition of $L^2\BG$ (\cite[II.2.4]{MW})

\begin{align}
    L^2\BG=\hat{\bigoplus_{\frakX\in\frakE}}L^2_\frakX\BG.
\end{align}
In particular, the space of all pseudo-Eisenstein series  
\begin{align*}
    \frakP\BG=\bigoplus_{\frakX\in\frakE} \frakP_\frakX\BG
\end{align*}
is dense in $L^2\BG$.

As commented at the end of section 5.1, we define $L^2_{\hdist}\BG$ to be the orthogonal complement of $\frakP_\frakX\BG_H^\circ$ in $L^2\BG$. we also define 
\begin{align*}
    L^2_{\disc, \hdist}\BG:=L^2_{\hdist}\BG\cap L^2_{\disc}\BG.
\end{align*}

\subsection{Finer Decomposition}\label{finer decomp}
We will follow \cite[6.3]{LO}, which summarizes \cite[Chapter V]{MW}. Fix $\frakX\in \frakE$ and $\frakF$ as in section 5.2. We define a root hyperplane in $(\fraka_M^G)^*$ as an affine hyperplane given by equation $\langle \lambda,
\alpha^\vee\rangle=c$ for some coroot $\alpha^\vee$ correspond to root $\alpha\in\Delta_P$ and $c\in \R$. Consider certain finite set $S^\frakF_\frakX$ consists of tuples $(M,\pi, \frakS)$ where $(M,\pi)\in \frakX$ and $\frakS$ is an affine subspace of $(\fraka_M^G)^*$ which is an intersection of root hyperplanes. (As remarked in \cite{LO}, $S_\frakX^\frakF$ is merely locally finite. However, we can replace it by the finite set $\operatorname{Sing}^{G,\frakF}$ in \cite[V.3.13]{MW}.) 
We define
$$S_\frakX:=\bigcup_{\substack{\frakF\in \hat{K}\\ \frakF \text{ finite}}}S_\frakX^\frakF.$$
$S_\frakX$ consists of triples $(M,\pi,\frakS)$ where $(M,\pi)\in \frakX$ and $\frakS$ is singular hyperplane for the intertwining operator corresponding to $(M,\pi)$.

We consider the set of equivalence classes of $S_\frakX^\frakF$ and $S_\frakX$ under the equivalence relation $(M,\pi,\frakS)\sim (M',\pi',\frakS')$ if there exist $w\in W((M,\pi),(M',\pi')$ such that $w\frakS = \frakS'$. We denote the set of equivalence classes of $S_\frakX^\frakF$ and $S_\frakX$ by $[S_\frakX^\frakF]$ and $[S_\frakX]$ respectively. With the above notation, we can state the finer decomposition (\cite[V.3.13 Corollary, V.3.14 Corollary]{MW})
$$L^2_\frakX\BG_\frakF=\bigoplus_{\frakC\in[S_\frakX^\frakF]}L^2_\frakX\BG_{\frakC,\frakF}$$
and further
\begin{align*}
    L^2_\frakX\BG=\hat{\bigoplus_{\frakC\in[S_\frakX]}} L^2_\frakX\BG_\frakC
\end{align*}
where 
\[
L^2_\frakX\BG_\frakC:=\overline{\sum_\frakF L^2_\frakX\BG_{\frakC,\frakF}}.
\]

The space $L^2_\frakX\BG_\frakC$ defined in \cite[V]{MW} as a integral of certain residual of Eisenstein series. For our purpose in this paper, we will use an alternative description. 

For $\frakC$, $\frakC'\in[S_\frakX]$, we denote $\frakC\succeq\frakC$ if for $(M,\pi,\frakS)\in\frakC$, we can choose $(M',\pi',\frakS')\in\frakC'$ such that $(M,\pi)=(M',\pi')$ and $\frakS\supseteq\frakS'$. This condition is well-defined since if $(M,\pi)=(M',\pi')$ and $\frakS\supseteq\frakS'$, then $w(M,\pi)=w(M',\pi')$ and $w\frakS\supseteq w\frakS'$ for any $w\in W$. We write $\frakC'\succ\frakC$ if $\frakC\succeq\frakC'$ but $\frakC\neq\frakC'$.

Fixing $m\gg 1$, we define
\begin{align*}
    &\Tilde{P}^{R,\frakF}_{\nsucceq\frakC}:=\{\phi=(\phi_{(M,\pi)})\in P^{R,\frakF}_\frakX: \text{ for any } \frakC'=[(M,\pi,\frakS)]\in [S_\frakX],\\
    &\frakC'\nsucceq \frakC, \phi_{(M,\pi)} \text{ together with its derivatives of order 1 to m vanishes on } \frakS'\}.
\end{align*}
Similarly, we define $\Tilde{P}^{R,\frakF}_{\nsucc\frakC}$. The space $\Tilde{P}^{R,\frakF}_{\nsucceq\frakC}$ is contained in the corresponding space $P^{R,\frakF}_{\frakC, T'}$ defined in \cite[V.3.3]{MW}, ($T'$ will be determined by $R$.) and the partial order $\succeq$ is replaced by a totally order refinement. In fact, if we replace $P^{R,\frakF}_{\frakC, T'}$ by $\Tilde{P}^{R,\frakF}_{\nsucceq\frakC}$ in \cite[V.3]{MW}, then all the statements and proofs remain valid. (And the induction is based on the codimension of $\frakS$.)

We have
\begin{equation}
L^2_\frakX\BG_{\frakC,\frakF}=\overline{\{\theta_\phi: \phi\in\Tilde{P}^{R,\frakF}_{\nsucceq\frakC}\}}\cap \{\theta_\phi: \phi\in\Tilde{P}^{R,\frakF}_{\nsucc\frakC}\}^\perp\label{L^2_X C,F description}
\end{equation}
and
\begin{equation}
\bigoplus_{\frakC'\succeq \frakC} L^2_\frakX\BG_{\frakC',\frakF}=\overline{\{\theta_\phi: \phi\in\Tilde{P}^{R,\frakF}_{\nsucceq\frakC}\}}.
\end{equation}

Let $(G,H)$ is a pair of groups which Theorem \ref{main theorem} is applicable. 
Assume for any Levi subgroup $M$ there exist a finite collection of affine hyperplane of $(\fraka_M^G)^*$, $\frakA_M^H$ with the following constrain. For each element $\frakS\in\frakA_M^H$, there exist an element $\lambda_\frakS\in\frakS$ and holomorphic functions $J_\frakS(\varphi
,-)$, where $\varphi\in \Au^{rd}_P(G)$, such that in a neighborhood of $\Re\;\lambda=\lambda_\frakS$ in the ambient space $\frakS_\C$, the integrals
\begin{align}
    \int_{[\H]_G} \theta_\phi (h) dh = \sum_{\frakS\in\frakA_M^H} \int_{\lambda\in\frakS_\C:\Re\;\lambda=\lambda_\frakS} J_\frakS(\phi[\lambda],\lambda) d\lambda
\end{align}
are absolutely convergent for any $\phi\in C_R(\UAMG)$. Here, $\frakS_\C$ is the complexification of the subspace parallel to $\frakS$.

Given $\frakX\in\frakE$ and $(M,\pi)\in\frakX$, define 
\begin{align*}
    &\frakA^H_{(M,\pi)}:=\{\frakS\in\frakA_M^H: J_\frakS(\varphi,-) \text{ does not vanish identically for some }\\
    &\varphi\in\Au^{rd}_P(G)\cap L^2_{\text{cusp},\pi}(\UAMG)\},
\end{align*}
and $$\frakA^H_\frakX:=\{(M,\pi,\frakS):(M,\pi)\in\frakX, \frakS\in\frakA^H_{(M,\pi)}\}.$$
Assume that $\frakA^H_\frakX$ is a union of equivalence classes of $S_\frakX$. (Enlarge $S_\frakX$ if necessary, but in our case, $\frakA^H_\frakX\subseteq [S_\frakX]$.) Let $[\Au_\frakX^H]$ be the set of equivalent classes of $\Au_\frakX^H$.

We separate $[S_\frakX]$ into a disjoint union of two subsets:
$$[S_\frakX]_H^\circ:=\{\frakC\in [S_\frakX]: \frakC'\nsucceq\frakC \text{ for any }\frakC'\in[\frakA^H_\frakX]\},$$
$$[S_\frakX]_{H\text{-dist}}:=\{\frakC\in [S_\frakX]: \frakC'\succeq\frakC \text{ for some }\frakC'\in[\frakA^H_\frakX]\}.$$

Note that if $\frakC\in[S_\frakX]_H^\circ$, then $ \int_{[\H]_G} \theta_\phi (h) dh$ will vanish for any $\phi\in \Tilde{P}^{R,\frakF}_{\nsucceq\frakC}$.

We have the following corollary from (\ref{L^2_X C,F description}).

\begin{cor}\label{L_2 with vanishing hyperplane is in vanishing spectrum}
    For any $\frakX\in\frakC$, we have 
    \[
    \bigoplus_{\frakC\in[S_\frakX]^\circ_H}L^2_\frakX \BG_\frakC\subseteq\overline{\frakP_\frakX\BG_H^\circ}.
    \]
    Thus, 
    \[L^2_{H\text{-dist}}\BG\subseteq\hat{\bigoplus_{\frakX\in\frakC, \frakC\in\SHdist}}L^2_\frakX\BG_\frakC.\]
    
\end{cor}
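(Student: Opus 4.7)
The plan is to establish the first inclusion for each fixed $\frakX$ and then obtain the second as a formal consequence via orthogonal complements. Once the first inclusion is known for every $\frakX\in\frakE$, summing yields $\hat{\bigoplus_{\frakX\in\frakE,\,\frakC\in[S_\frakX]^\circ_H}}L^2_\frakX\BG_\frakC\subseteq\overline{\frakP\BG_H^\circ}$. Taking $\perp$ inside $L^2\BG$ and combining with the coarse-fine decomposition together with the disjoint partition $[S_\frakX]=[S_\frakX]^\circ_H\sqcup\SHdist$ gives
\[
L^2_{H\text{-dist}}\BG=\bigl(\overline{\frakP\BG_H^\circ}\bigr)^\perp\subseteq\hat{\bigoplus_{\frakX\in\frakE,\,\frakC\in\SHdist}}L^2_\frakX\BG_\frakC,
\]
which is the second assertion.

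For the first inclusion, I would fix $\frakX$, $\frakC\in[S_\frakX]^\circ_H$, and a finite $K$-type set $\frakF\subset\hat{K}$. The characterization (\ref{L^2_X C,F description}) gives the containment
\[
L^2_\frakX\BG_{\frakC,\frakF}\subseteq\overline{\{\theta_\phi:\phi\in\Tilde{P}^{R,\frakF}_{\nsucceq\frakC}\}},
\]
reducing the task to the set-level inclusion $\{\theta_\phi:\phi\in\Tilde{P}^{R,\frakF}_{\nsucceq\frakC}\}\subseteq\frakP_\frakX\BG_H^\circ$. Taking closures and then unioning over $\frakF$ then yields $L^2_\frakX\BG_\frakC\subseteq\overline{\frakP_\frakX\BG_H^\circ}$.

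The core step is thus to verify that $\theta_\phi\in\frakP_\frakX\BG_H^\circ$ for every $\phi\in\Tilde{P}^{R,\frakF}_{\nsucceq\frakC}$. Applying the assumed $H$-period expansion of Section 5.3 gives
\[
\int_{[\H]_G}\theta_\phi(h)\,dh=\sum_{(M,\pi)\in\frakX}\,\sum_{\frakS\in\frakA_M^H}\,\int_{\Re\,\lambda=\lambda_\frakS}J_\frakS(\phi_{(M,\pi)}[\lambda],\lambda)\,d\lambda.
\]
Terms with $\frakS\notin\frakA^H_{(M,\pi)}$ drop out by the defining property of $\frakA^H_{(M,\pi)}$; for each surviving triple $(M,\pi,\frakS)\in\frakA^H_\frakX$ the class $[(M,\pi,\frakS)]\in[\frakA^H_\frakX]$ satisfies $[(M,\pi,\frakS)]\nsucceq\frakC$ by the hypothesis $\frakC\in[S_\frakX]^\circ_H$, so the definition of $\Tilde{P}^{R,\frakF}_{\nsucceq\frakC}$ forces $\phi_{(M,\pi)}[\lambda]\equiv 0$ along $\frakS$ and the corresponding contour integral vanishes identically. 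To promote this to $\theta_\phi\in\frakP_\frakX\BG_H^\circ$, which demands vanishing of $\int_{[\H]_G}(f\ast\theta_\phi)(h)\,dh$ for all $f\in C_c(\G(\A))$, I repeat the argument with the translated data: since convolution by $f$ acts pointwise in the spectral parameter $\lambda$, $f\ast\phi$ still belongs to $\Tilde{P}^{R,\frakF'}_{\nsucceq\frakC}$ for some enlarged $\frakF'$, and the same vanishing applies.

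The main obstacle I anticipate is a purely technical verification, namely confirming that right convolution by $f\in C_c(\G(\A))$ keeps the datum within a class to which the period expansion applies and preserves the hyperplane-vanishing condition defining $\Tilde{P}^{R,\frakF}_{\nsucceq\frakC}$. Both points reduce to the observation that the $\G(\A)$-action is pointwise in $\lambda$, preserves moderate growth, and at worst enlarges the set of admissible $K$-types. Beyond this, the argument is a formal application of Theorem~\ref{main theorem} inside the Mœglin-Waldspurger fine decomposition framework.
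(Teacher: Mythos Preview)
Your proposal is correct and follows essentially the same route as the paper: the paper simply observes (just before the corollary) that for $\frakC\in[S_\frakX]_H^\circ$ the $H$-period of $\theta_\phi$ vanishes for every $\phi\in\Tilde{P}^{R,\frakF}_{\nsucceq\frakC}$, and then invokes the description (\ref{L^2_X C,F description}) and the partition $[S_\frakX]=[S_\frakX]^\circ_H\sqcup\SHdist$. You supply more detail than the paper, in particular the passage from vanishing of $\int_{[\H]_G}\theta_\phi$ to vanishing of $\int_{[\H]_G}(f\ast\theta_\phi)$, which the paper handles implicitly via the remark preceding Section~5.2 that for $\phi$ continuous with absolutely convergent $H$-period one has $\mathcal{C}_H^\circ=\{\phi:\int_{[\H]_G}\phi(hg)\,dh=0\text{ for all }g\}$.
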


\subsection{The Main Result}
Let
$$\frakB:=\{(\frakX,\frakC):\frakX\in\frakE,\frakC\in[S_\frakX]\}.$$
Recall the finer decomposition in Section 5.3
$$L^2\BG=\hat{\bigoplus_{(\frakX,\frakC)\in\frakB}}L^2_\frakX\BG_\frakC.$$
Define the following subsets of $\frakB$:
\begin{align*}
    \frakQ&=\{(\frakX,\frakC)\in\frakB:\dim \frakC=0\};\\
    \frakB_{\hdist}&=\{(\frakX,\frakC)\in\frakB: \frakC\in\SHdist \};\\
    \frakQ_{\hdist}&=\frakQ\cap\frakB_{\hdist};\\
    \Tilde{\frakB}&=\{(\frakX,\frakC)\in\frakB: \frakX\in\frakE\};\\
    \Tilde{\frakQ}&=\frakQ\cap\Tilde{\frakB};
\end{align*}
where $\frakE$ is the set of cuspidal data $(M,\pi)$, where $M$ is contained in the Siegel Levi subgroup. By theorem $\ref{main theorem}$, we have $\frakB_{\hdist}\subseteq\Tilde{\frakB}$ and $\frakQ_{\hdist}\subseteq\Tilde{\frakQ}$.

Denote 
\begin{equation*}
\tilde{L}^2\BG= \hat{\bigoplus}_{\frakX\in \tilde{\frakE}} L^2_\frakX\BG
\end{equation*}
and
\begin{equation*}
\Tilde{L}^2\BG_\disc=L^2_\disc\BG\cap \tilde L^2\BG.
\end{equation*}
Since $\frakB_{\hdist}\subseteq\Tilde{\frakB}$, by Corollary \ref{L_2 with vanishing hyperplane is in vanishing spectrum}, we have 
\begin{align}
    L^2_{\hdist}\BG\subseteq \Hat{\bigoplus_{(\frakX,\frakC)\in \frakB_{\hdist}}}L^2_\frakX\BG \subseteq \tilde{L}^2\BG.
\end{align}

\bibliographystyle{plain} 
\bibliography{refs} 
\end{document}